 \newtheorem{theorem}{Theorem}[section]
\newtheorem{corollary}[theorem]{Corollary}
\newtheorem{lemma}[theorem]{Lemma}
\newtheorem{prop}[theorem]{Proposition}
\theoremstyle{definition}
\newtheorem{example}[theorem]{Example}
\newtheorem{remark}[theorem]{Remark}
\newtheorem*{ack}{Acknowledgments}
\DeclareMathOperator{\im}{im}
\DeclareMathOperator{\id}{id}
\DeclareMathOperator{\coker}{coker}
\DeclareMathOperator{\Ann}{Ann}
\DeclareMathOperator{\rank}{rank}
\DeclareMathOperator{\ab}{ab}
\DeclareMathOperator{\gr}{gr}
\DeclareMathOperator{\Sym}{Sym}
\DeclareMathOperator{\Hilb}{Hilb}
\DeclareMathOperator{\lcm}{LCM}
\DeclareMathOperator{\ini}{in}
\DeclareMathOperator{\ideal}{ideal}
\DeclareMathOperator{\Aut}{Aut}
\DeclareMathOperator{\Spec}{Spec}
\DeclareMathOperator{\spec}{Spec}
\DeclareMathOperator{\spn}{span}
\DeclareMathOperator{\Ass}{Ass}
\newcommand{\Q}{\mathbb{Q}}
\newcommand{\C}{\mathbb{C}}
\newcommand{\Z}{\mathbb{Z}}
\newcommand{\fB}{\mathfrak{B}}
\newcommand{\fh}{\mathfrak{h}}
\newcommand{\fg}{\mathfrak{g}}
\newcommand{\fm}{\mathfrak{m}}
\newcommand{\fI}{\mathfrak{I}}
\newcommand{\Lie}{\mathfrak{lie}}
\newcommand{\fq}{\mathfrak{q}}
\newcommand{\bL}{\mathbf{L}}
\newcommand{\V}{\mathbf{V}}
\newcommand{\bV}{\mathbf{V}}
\newcommand{\x}{\mathbf{x}}
\newcommand{\fS}{\mathfrak{S}}
\newcommand{\cD}{\mathcal{D}}
\newcommand{\cB}{\mathcal{B}}
\newcommand{\cE}{\mathcal{E}}
\newcommand{\cR}{\mathcal{R}}
\newcommand{\RR}{\mathcal{R}}
\newcommand{\cG}{\mathcal{G}}
\newcommand{\bg}{{\mathbf{g}}}
\newcommand{\ff}{{\mathbf{f}}}
\newcommand{\bh}{{\mathbf{h}}}
\def\dot{\mathchar"013A}  
\newcommand{\hdot}{{\raise1pt\hbox to0.35em{\huge $\dot$}}}
\newcommand{\bwedge}{\mbox{$\bigwedge$}}
\newcommand{\surj}{\twoheadrightarrow}
\newcommand{\inj}{\hookrightarrow}
\begin{document}


\title[Chen ranks and resonance varieties of the upper McCool groups]%
{Chen ranks and resonance varieties of the \\ upper McCool groups}

\author[Alexander~I.~Suciu]{Alexander~I.~Suciu$^1$}
\address{Department of Mathematics,
Northeastern University,
Boston, MA 02115, USA}
\email{\href{mailto:a.suciu@northeastern.edu}{a.suciu@northeastern.edu}}
\urladdr{\href{http://web.northeastern.edu/suciu/}%
{http://web.northeastern.edu/suciu/}}
\thanks{$^1$Supported in part by the Simons Foundation Collaboration 
Grant for Mathematicians \#354156}

\author{He Wang}
\address{Department of Mathematics and Statistics MS0084,
University of Nevada, Reno, NV 89557, USA}
\email{\href{mailto:wanghemath@gmail.com}%
{wanghemath@gmail.com}, \href{mailto:hew@unr.edu}%
{hew@unr.edu}}
\urladdr{\href{http://wolfweb.unr.edu/homepage/hew/}%
{http://wolfweb.unr.edu/homepage/hew/}}

\subjclass[2010]{Primary
20E36,  
Secondary
13P10,  
14M12,  	
17B70,    
20F14,  
20J05
}

\keywords{McCool and upper McCool groups, resonance varieties, resonance 
scheme, Chen ranks, infinitesimal Alexander invariant}

\begin{abstract}
The group of basis-conjugating automorphisms of the free group 
of rank $n$, also known as the McCool group or the welded braid group 
$P\Sigma_n$, contains a much-studied subgroup, called the upper McCool 
group $P\Sigma_n^+$. Starting from the cohomology ring of 
$P\Sigma_n^+$, we find, by means of a Gr\"obner basis computation, 
a simple presentation for the infinitesimal Alexander invariant of this group, 
from which we determine the resonance varieties and the Chen ranks of 
the upper McCool groups.  These computations reveal that, 
unlike for the pure braid group $P_n$ and the full McCool 
group $P\Sigma_n$, the Chen ranks conjecture does not 
hold for $P\Sigma_n^+$, for any $n\ge 4$.  Consequently, 
$P\Sigma_n^+$ is not isomorphic to $P_n$ 
in that range, thus answering a question of Cohen, Pakianathan, 
Vershinin, and Wu. We also determine the scheme structure
of the resonance varieties $\cR_1(P\Sigma_n^+)$, and show 
that these schemes are not reduced for $n\geq 4$.
\end{abstract}

\maketitle
\setcounter{tocdepth}{1}
\tableofcontents

\section{Introduction}
\label{sec:Intro}

\subsection{Basis-conjugating groups}
\label{subsec:pure}

An automorphism of the free group $F_{n}=\langle x_1,\dots, x_n\rangle$ is 
called a symmetric automorphism if it sends each generator $x_i$ to a conjugate 
of $x_{\sigma(i)}$, for some permutation $\sigma\in \Sigma_n$. 
The set of all such automorphisms forms a subgroup $B\Sigma_n$ 
of $\Aut(F_n)$, known as the {\em braid-permutation group}, 
\cite{Cohen-P-V-Wu08} or the {\em welded braid group}, 
\cite{Bar-Natan-Dancso16}.  The Artin braid group $B_n$ is 
the subgroup of $B\Sigma_n$  consisting of those symmetric 
automorphisms which fix the word $x_1\cdots x_n$. 
 
The kernel of the canonical projection $B\Sigma_n\surj S_n$, denoted $P\Sigma_n$, 
is known as the {\em basis-conjugating group}, or the {\em pure welded braid group}.  
In \cite{McCool86}, J.~McCool  showed that  $P\Sigma_n$  
is generated by the Magnus automorphisms $\alpha_{ij}\colon x_i \mapsto 
x_jx_ix_j^{-1}$, for all $1\le i\ne j\le n$, and gave a presentation of this group; 
for that reason, $P\Sigma_n$ is also known as the {\em McCool group}.  
Notably, the group $P\Sigma_n$ can be realized as the pure motion 
group of $n$ unknotted, unlinked circles in $S^3$.  We refer 
to the recent surveys \cite{Damiani17, SW-pisa} for detailed accounts 
of this subject and further references.

We concentrate in this paper on the subgroup of $P\Sigma_n$ 
generated by the automorphisms $\alpha_{ij}$ with $i>j$.  This subgroup  
is called the {\em upper triangular McCool group}, and is denoted by $P\Sigma_n^{+}$.
Both the pure braid group $P_n=\ker(B_n\surj S_n)$ and the upper 
McCool group  $P\Sigma_n^+$ 
are subgroups of the full McCool group $P\Sigma_n$. 
Furthermore, both groups are iterated semidirect products 
of the form $F_{n-1}\rtimes \dots \rtimes F_2\rtimes F_1$, with 
monodromies acting trivially in first homology; thus, they 
share the same Betti numbers and the same lower central series 
quotients, see \cite{Arnold69,  FalkRandell, Kohno85, Cohen-P-V-Wu08}.

In \cite{Cohen-P-V-Wu08}, Cohen, Pakianathan, Vershinin, and Wu  
asked whether or not the groups $P_n$ and  $P\Sigma_n^+$ 
are isomorphic. For $n\leq 3$, it was already known that the answer is yes.  
In \cite{Bardakov-Mikhailov08}, Bardakov and Mikhailov attempted to 
prove that $P_4$ is not isomorphic to $P\Sigma_4^+$ by showing that
the two groups have different single-variable Alexander polynomials. 
However, the single-variable Alexander polynomial depends on the choice 
of presentation for a group, and thus it cannot be 
used as an isomorphism-type invariant.  Moreover, the multi-variable Alexander 
polynomial (which is an isomorphism-type invariant), is equal to $1$ for 
both $P_4$ and $P\Sigma_4^+$.  

Nevertheless, the work that we undertake here  allows us to distinguish 
the groups $P_n$ and  $P\Sigma_n^+$ for all $n\ge 4$, by means of 
both the Chen ranks and the resonance varieties associated to these groups.  
Some of these results were announced in \cite{SW-pisa}; this 
paper contains full proofs of those  results.  

\subsection{Chen ranks}
\label{subsec:intro-chen}

Given a finitely generated group $G$, we let $\{\Gamma_k G\}_{k\ge 1}$ 
be its lower central series, and we let 
$\gr(G)=\bigoplus_{k\ge 1} \Gamma_k G/\Gamma_{k+1} G$ 
be the associated graded Lie ring, with Lie bracket induced from the group commutator.   
The LCS ranks of $G$, then, are the integers $\phi_k(G)=\rank \gr_k(G)$. 

The Chen ranks of $G$, introduced by K.-T.~Chen in \cite{Chen51}, are the LCS 
ranks of the quotient of $G$ by its second derived subgroup, $G''$:
\[
\theta_k(G):=\rank \gr_k(G/G'').
\]
As such, the Chen ranks provide an approximation from below for the LCS ranks. 
For a variety of reasons, though, the Chen ranks $\theta_k(G)$ are invariants worth 
studying in their own right, oftentimes providing more refined information about the 
given group $G$ than the LCS ranks $\phi_k(G)$. 

In \cite{Massey80}, W.~Massey used the Chen ranks to study the fundamental groups of 
link complements.  In the process, he showed that the Chen ranks of a finitely generated 
group $G$ can be computed from the Alexander invariant 
$B(G):=H_1(G^{\prime};\C)$, which is the first homology of the commutator subgroup 
$G^{\prime}=\Gamma_2 G$, viewed as a module over the group algebra 
$R=\C[H_1(G;\Z)]$, as follows:
\[
\theta_k(G)=\dim_{\C} \gr_{k-2} (B(G)), \textrm{ for $k\ge 2$},
\]
where $\gr (B(G))$  is the associated graded module to $B(G)$ 
with respect to the filtration by powers of the augmentation ideal 
$I=\ker (\varepsilon \colon R\to \C)$.

A quadratic approximation of the Lie algebra $\gr(G)\otimes \C$ is the 
\emph{holonomy Lie algebra}\/ of $G$ defined by $\fh(G):=\Lie(H_1(G;\C))/\im(\partial_G)$,
where $\Lie(H_1(G;\C))$ the free Lie algebra generated by the first homology $H_1(G;\C)$, 
and $\partial_G$ is the dual of the cup product map 
$H^1(G;\C)\wedge H^1(G;\C)\to H^2(G;\C)$.
The infinitesimal Alexander invariant of $G$ is the finitely generated, 
graded $S$-module defined by $\fB(G):=\fh(G)^{\prime}/\fh(G)^{\prime\prime}$,
where $S=\gr(R)$ is the symmetric algebra on $H_1(G;\C)$.   
If the group $G$ is $1$-formal (in the sense of rational 
homotopy theory), then, as shown by Papadima and Suciu in \cite{Papadima-Suciu04}, 
there is an isomorphism of graded $S$-modules, $\gr(B(G))\cong \fB(G)$. 
Thus, the Chen ranks of such groups $G$ can 
be computed from the Hilbert series of $\fB(G)$.

The class of $1$-formal groups to which the above method applies includes all 
arrangement groups (such as the pure braid groups $P_n$), K\"ahler groups, 
and right-angled Artin groups, see for instance 
\cite{Dimca-Papadima-Suciu09, Papadima-Suciu09, SW-formality} 
and references therein.  Of great importance to us is that, as shown by 
Berceanu and Papadima in \cite{Berceanu-Papadima09}, all the McCool groups 
$P\Sigma_n$ and $P\Sigma^+_n$ are $1$-formal.

Based on a refinement of the Gr\"obner basis algorithm from \cite{Cohen-Suciu95} 
applied to the infinitesimal Alexander invariant $\fB(P\Sigma^+_n)$, we find a closed 
formula for the Chen ranks of the groups $P\Sigma_n^+$.

\begin{theorem}[Theorem \ref{thm:chenranks}]
\label{thm:ChenRanksIntro}
The Chen ranks of the upper McCool groups, $\theta_k=\theta_k(P\Sigma_n^+)$, 
are given by $\theta_1=\binom{n}{2}$, $\theta_2=\binom{n}{3}$, and
\begin{equation*}
\theta_k= \binom{n+1}{4} + \sum_{i=3}^k\binom{n+i-2}{i+1}  \quad \text{for $k\geq 3$}.
\end{equation*}
\end{theorem}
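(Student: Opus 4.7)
The plan is to leverage the machinery recalled in Section \ref{subsec:intro-chen}. Since $P\Sigma_n^+$ is $1$-formal by Berceanu--Papadima, the result of Papadima--Suciu gives
\[
\theta_k(P\Sigma_n^+) = \dim_\C \fB(P\Sigma_n^+)_{k-2} \quad \text{for } k\geq 2,
\]
so the task reduces to computing the Hilbert series of the infinitesimal Alexander invariant $\fB(P\Sigma_n^+)$ as a graded module over the symmetric algebra $S = \Sym(H_1(P\Sigma_n^+;\C))$ on the $\binom{n}{2}$-dimensional vector space indexed by the generators $\alpha_{ij}$ with $i>j$. The claimed values $\theta_1 = \binom{n}{2}$ and $\theta_2 = \binom{n}{3}$ should be verified directly from the abelianization and from $\dim \fB_0$ respectively, leaving the substantive content in degrees $k\geq 3$.

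First, I would start from the presentation of the cohomology ring $H^*(P\Sigma_n^+;\C)$ available from the earlier sections of the paper, and dualize the cup product $H^1\wedge H^1 \to H^2$ to obtain the map $\partial$ and hence a quadratic presentation of the holonomy Lie algebra $\fh(P\Sigma_n^+) = \Lie(H_1)/\im(\partial)$. Passing to $\fh'/\fh''$ then yields an explicit finite presentation $S^a \to S^b \to \fB(P\Sigma_n^+) \to 0$, with generators indexed by pairs of generators of $\fh$ and relations read off from the quadratic Lie relations together with the Jacobi identity. Second, I would apply the refined Gr\"obner basis algorithm of \cite{Cohen-Suciu95} to this presentation, using a monomial order on $S$ adapted to the lexicographic structure of the indexing pairs $(i,j)$ with $i>j$. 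If the right order is chosen, the leading terms of the Gr\"obner basis should fall into a pattern uniform in $n$, so that the standard monomials in each degree can be enumerated in closed form.

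The final step is then a combinatorial count: one expects the contribution $\binom{n+1}{4}$ to come from a family of ``low-degree'' standard monomials that persists in every positive degree, while the term $\binom{n+i-2}{i+1}$ in the sum counts the additional standard monomials born in degree $i \geq 3$. Summing these contributions through degree $k-2$ and reindexing should yield the stated formula. The main obstacle will be the Gr\"obner basis step: while the quadratic relations themselves are easy to list, verifying that a proposed set is actually a Gr\"obner basis requires checking that all $S$-polynomials reduce to zero, and doing this in a way that is \emph{uniform in $n$} rather than verified case by case for small $n$. It is precisely this uniformity, together with the resulting regular structure of the standard monomials, that turns a computer experiment into the closed-form formula asserted in the theorem.
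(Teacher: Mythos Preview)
Your plan is correct and follows essentially the same route as the paper: use $1$-formality to reduce to the Hilbert series of $\fB(P\Sigma_n^+)$, obtain an explicit presentation from the cohomology ring (the paper does this via Lemma~\ref{lem:presentationAlex} and the reduction in Lemma~\ref{lem:basisIJK}), compute a Gr\"obner basis uniform in $n$ (Theorem~\ref{thm:GrobnerBasis}, with the $S$-polynomial checks relegated to the Appendix), and then read off the Hilbert series from the initial ideal (Theorem~\ref{thm:HilbertSeries}). The only point where the paper is more specific than your sketch is the final combinatorics: rather than counting standard monomials directly, the paper writes the Hilbert series in closed form as $\sum_{s=2}^{n-1}\binom{s}{2}(1-t)^{-(n-s+1)} + \binom{n}{4}\,t(1-t)^{-1}$ and extracts coefficients, so the constant ``persisting'' contribution is actually $\binom{n}{4}$ rather than $\binom{n+1}{4}$, with the latter appearing only after the recurrence is unwound.
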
 
 
As a quick application of our result, we obtain the following corollary, which answers 
the aforementioned question of F.~Cohen et al. from \cite{Cohen-P-V-Wu08}.

\begin{corollary}[Corollary \ref{cor:chen ppp}]
\label{cor:IntroIso}
For each $n\ge 4$, the pure braid group $P_n$, the upper 
McCool group $P\Sigma_n^+$, and the direct product 
$\Pi_n:= \prod_{i=1}^{n-1}F_{i}$ are all pairwise non-isomorphic, 
although they all do have the same LCS ranks and the 
same Betti numbers.
\end{corollary}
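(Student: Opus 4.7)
The plan is to distinguish the three groups by comparing their Chen rank sequences $\{\theta_k\}_{k\geq 1}$, which are isomorphism invariants of the underlying group. The assertion that all three share the same Betti numbers and the same LCS ranks is essentially free: $P_n$ and $P\Sigma_n^+$ admit iterated semidirect product decompositions $F_{n-1}\rtimes\cdots\rtimes F_1$ with monodromies acting trivially on first homology (as recalled in the introduction), and $\Pi_n$ is such a product with trivial monodromy throughout. The Falk--Randell / Arnold formalism then produces the common Poincar\'e polynomial $\prod_{i=1}^{n-1}(1+it)$ and the common LCS rank generating identity $\prod_{k\geq 1}(1-t^k)^{\phi_k}=\prod_{i=1}^{n-1}(1-it)$.

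For the Chen ranks themselves, I would assemble three explicit polynomial formulas in $k$ and then compare degrees and leading coefficients. Theorem~\ref{thm:ChenRanksIntro} gives $\theta_k(P\Sigma_n^+)$ as a sum of binomials; rewriting $\binom{n+i-2}{i+1}=\binom{n+i-2}{n-3}$ and invoking the hockey-stick identity collapses the sum, yielding for $k\geq 3$ the closed form
\[
\theta_k(P\Sigma_n^+)=\binom{n+k-1}{n-2}+\binom{n+1}{4}-\binom{n+1}{3},
\]
a polynomial in $k$ of degree $n-2$ with leading coefficient $1/(n-2)!$. For the pure braid group, I would cite the Cohen--Suciu formula $\theta_k(P_n)=(k-1)\binom{n+1}{4}$, valid for $k\geq 3$, which is linear in $k$. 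For $\Pi_n$ the Chen ranks are additive across direct products, and Chen's classical formula $\theta_k(F_m)=(k-1)\binom{m+k-2}{k}$ gives
\[
\theta_k(\Pi_n)=\sum_{i=2}^{n-1}(k-1)\binom{i+k-2}{k},
\]
a polynomial of degree $n-2$ in $k$ with leading coefficient $1/(n-3)!$.

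Once these three expressions are in hand, pairwise non-isomorphism for $n\geq 4$ falls out from the degree and leading-coefficient data. The sequence $\theta_k(P_n)$ has polynomial degree $1$ in $k$, while the other two have degree $n-2\geq 2$, so $P_n$ is separated from both for $k\gg 0$. For the remaining pair, the leading coefficients $1/(n-2)!$ and $1/(n-3)!$ of $\theta_k(P\Sigma_n^+)$ and $\theta_k(\Pi_n)$ differ by the factor $n-2\neq 1$, forcing the two sequences to diverge. I do not anticipate a conceptual obstacle; the only moving parts are the hockey-stick simplification of Theorem~\ref{thm:ChenRanksIntro} and a careful asymptotic comparison, with a direct numerical check available in the borderline case $n=4$ (where $\theta_4$ takes the three distinct values $15$, $16$, $18$) if one prefers to avoid the asymptotic argument.
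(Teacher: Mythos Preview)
Your argument is correct and follows the same underlying strategy as the paper: distinguish the three groups via their Chen ranks. The execution differs in one respect worth noting. The paper simply evaluates a single Chen rank, $\theta_4$, for each group---obtaining $3\binom{n+1}{4}$, $3\binom{n+2}{5}$, and $2\binom{n+1}{4}+\binom{n+2}{5}$ for $P_n$, $\Pi_n$, and $P\Sigma_n^+$ respectively---and observes that these three quantities are pairwise distinct for all $n\ge 4$. You instead derive closed forms for the full sequences $\theta_k$ and compare their polynomial degree and leading coefficient in $k$. Your route is a bit more work (the hockey-stick reduction, the asymptotic comparison) but yields the same conclusion; the paper's single-value check at $k=4$ is the more economical version of exactly the numerical fallback you mention at the end.
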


The fact that $P_n \not\cong \Pi_n$ for $n\ge 4$ was already 
established by Cohen and Suciu in \cite{Cohen-Suciu95}, 
also using the Chen ranks.  The 
novelty here is the distinction between $P\Sigma_n^+$ 
and the other two groups. 

\subsection{Resonance varieties}
\label{subsec:intro-res}

Given a finitely generated group $G$, we let $A^*=H^*(G;\C)$ be its 
cohomology algebra. The resonance varieties of $G$ are the jump loci 
for the cohomology of the Aomoto complexes $(A,a)$ parametrized by 
the vector space $A^1$. 
We focus here on the \emph{first resonance variety}, $\cR_1(G)$, which is 
defined as  
\begin{equation}
\label{eq:r1g}
\cR_1(G)=\big\{a\in A^1\mid \text{$\exists \, b\in A^1$ such that 
$b\notin \C\cdot a$ and $ab=0\in A^2$}\big\}.
\end{equation}

In general, these varieties can be arbitrarily complicated  
homogeneous algebraic subsets of $A^1$.  Nevertheless, 
if the group $G$ is $1$-formal, then the Tangent Cone theorem of 
\cite{Dimca-Papadima-Suciu09} insures that $\cR_1(G)$ 
is a union of rationally defined linear subspaces of $H^1(G;\C)$. 
For instance, the first resonance variety of the pure braid group $P_n$,
determined in \cite{Cohen-Suciu99}, is a union of linear subspaces of $H^1(P_n;\C)$
of dimension $2$.  In \cite{DCohen09}, D.~Cohen computed the first resonance variety 
of the full McCool group $P\Sigma_n$, showing that this variety is a union of linear 
subspaces of $H^1(P\Sigma_n;\C)$ of dimension $2$ and $3$.

In this paper, we pursue this line of inquiry by determining the 
resonance varieties of the upper McCool groups $P\Sigma_n^+$.  
To start with, let us identify the ambient space $H^1(P\Sigma_n^+;\C)$ 
with $\C^{\binom{n}{2}}$, and pick coordinate functions $x_{i,j}$ with $1\le j<i\le n$ 
corresponding to the Magnus generators $\alpha_{ij}$.  By \cite{Matei-Suciu00},  
the resonance variety $\cR_1(P\Sigma_n^+)$ is cut out by the annihilator ideal 
of the infinitesimal Alexander invariant $\fB_n=\fB(P\Sigma^+_n)$, viewed as a 
module over the coordinate ring $S=\C[x_{ij}]$.  Using the aforementioned 
Gr\"obner basis for $\fB_n$, we arrive at the following description of the 
variety $\cR_1(P\Sigma_n^+)$.

\begin{theorem}[Theorems \ref{thm:Resonance} and \ref{thm:resonance}]
\label{thm:ResonanceIntro}
For each $n\ge 3$, the first resonance variety of the upper McCool group $P\Sigma_n^+$ 
decomposes into irreducible components as 
\begin{equation*}
\cR_1(P\Sigma_n^+)=\bigcup\limits_{2\leq j<i\leq n} L_{ij},
\end{equation*}
where $L_{ij}$ is the $j$-dimensional linear subspace 
of $H^1(P\Sigma_n^+;\C)$ defined by the equations 
\begin{equation}
\label{eq:intro L_ij}
\left\{
\begin{aligned}
\notag
x_{i,l}+x_{j,l}&=0 &&\text{for $1\leq l\leq j-1$},\\
x_{i,l}&=0 &&\text{for  $j+1\leq l\leq i-1$},\\
x_{s,t}&=0 &&\text{for $s\neq i$, $s\neq j$, and $1\leq t<s$}.
\end{aligned}
\right.
\end{equation} 
Moreover, 
\begin{enumerate}
\item 
All the components are projectively disjoint, i.e.,  
$L_{ij}\cap L_{st}=\{0\}$ if $(i,j)\neq (s,t)$.
\item 
The subspaces $L_{ij}$ are $0$-isotropic for $j=2$ 
and $\binom{j-1}{2}$-isotropic for $j\geq 3$, i.e.,  
the restriction of the cup-product map on $H^1(P\Sigma_n^+;\C)$ 
to $L_{ij}$ has rank $\binom{j-1}{2}$.
\end{enumerate}
\end{theorem}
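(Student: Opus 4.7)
My plan is to deduce the theorem from the explicit Gröbner basis presentation of the infinitesimal Alexander invariant $\fB_n$ established earlier in the paper, combined with the Matei--Suciu identification $\cR_1(P\Sigma_n^+) = V(\Ann_S(\fB_n))$, where $S = \C[x_{ij}]$. Since the Gröbner basis gives a concrete finite set of $S$-linear relations on a known generating set of $\fB_n$, the annihilator ideal (and its minimal primes) can be extracted from the resulting presentation matrix.

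For the containment $\bigcup L_{ij} \subseteq \cR_1(P\Sigma_n^+)$, for each pair $(i,j)$ with $2\le j<i\le n$ I would exhibit a distinguished generator of $\fB_n$ killed precisely by the linear forms in \eqref{eq:intro L_ij}; equivalently, given a generic $a \in L_{ij}$, I would produce $b \in A^1 \smallsetminus \C\cdot a$ with $ab = 0\in A^2$ by a direct computation in $A^* = H^*(P\Sigma_n^+;\C)$. The reverse containment is the more delicate step: I would compute the minimal primes of $\Ann_S(\fB_n)$ from the Gröbner basis, using the leading monomials to decompose the ideal and verify that the primes are indexed exactly by the pairs $(i,j)$ and cut out the corresponding $L_{ij}$.

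For projective disjointness I would argue directly from \eqref{eq:intro L_ij}: outside the two rows indexed by $i$ and $j$ all coordinates vanish on $L_{ij}$, so $L_{ij}\cap L_{st}=\{0\}$ is immediate when $\{i,j\}\cap\{s,t\}=\emptyset$; if the two pairs share exactly one index, the additional equations $x_{i,l}=0$ and the sum-zero relations pin the intersection down by a short case check. For the isotropicity claim, I would fix a basis $\{e_1,\dots,e_j\}$ of $L_{ij}$ adapted to \eqref{eq:intro L_ij} and compute each cup product $e_p\cup e_q$ in $A^2$ using the known presentation of $H^*(P\Sigma_n^+;\C)$; the expected pattern is that the image of the restricted cup-product map matches the degree-$2$ cohomology of an appropriate nested free subgroup of $P\Sigma_j^+$, of dimension $\binom{j-1}{2}$ (and vanishes in the edge case $j=2$).

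The principal obstacle will be the reverse inclusion in the decomposition: although the Gröbner basis describes $\fB_n$ explicitly, translating this into a clean statement about the minimal primes of $\Ann_S(\fB_n)$ requires tracking leading terms systematically across all $(i,j)$ simultaneously, and ruling out extraneous embedded or higher-codimensional components in the radical. The isotropicity calculation, by contrast, is largely bookkeeping once the basis of $L_{ij}$ is chosen and the cup-product table is in hand.
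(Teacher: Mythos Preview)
Your plan is correct and tracks the paper's proof closely for the forward inclusion, the projective disjointness, and the isotropicity computation: in each case the paper does exactly what you describe (parametrize $L_{ij}$ by the basis $\{u_{jl}-u_{il},\,u_{ij}:1\le l\le j-1\}$, verify $a\cdot u_{ij}=0$ directly from the relations $u_{ij}(u_{ik}-u_{jk})=0$, and compute the cup products among the basis vectors to find the image has basis $\{(u_{jl}-u_{il})(u_{jk}-u_{ik}):1\le l<k\le j-1\}$).

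The one place where your route differs is the reverse inclusion. You propose to compute the minimal primes of $\Ann_S(\fB_n)$ from the Gr\"obner basis; the paper avoids this. Instead it exploits only the \emph{block upper-triangular} shape of the Gr\"obner presentation (Corollary~\ref{cor:diag}): by Lemma~\ref{lem:bound lemma}, such a shape gives $\cR_1(P\Sigma_n^+)\subseteq\bigcup_{i>j>k} V(\vec{w}_{ijk})$, where $\vec{w}_{ijk}$ is the diagonal row vector. Since each $V(\vec{w}_{ijk})$ is visibly contained in $L_{ij}$ (the linear entries of $\vec{w}_{ijk}$ already cut out a subspace of $L_{ij}$), the upper bound follows without ever touching the annihilator ideal itself. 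This shortcut is considerably cheaper than your proposed primary-decomposition argument---which the paper does carry out, but only later (Section~\ref{sec:scheme}), where the full scheme structure is the goal. For the variety-level statement here, the block-triangular bound suffices and sidesteps the obstacle you flag about ruling out extraneous components.
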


A finitely presented group $G$ is said to be \emph{quasi-projective}\/ if it can 
be realized as the fundamental group of a smooth, complex, 
quasi-projective variety. A classical problem, formulated by J.-P. Serre, 
is to determine which finitely presented groups are quasi-projective. 
Using the aforementioned description of the first resonance varieties of  
$P\Sigma_n^+$ and structural theorems from 
\cite{Dimca-Papadima-Suciu08, Dimca-Papadima-Suciu09}, 
we obtain the following result.

\begin{prop}[Proposition \ref{prop:quasiproj}]
\label{prop:intro quasiproj}
The upper McCool groups $P\Sigma_n^+$ are not quasi-projective groups, 
for any $n\geq 4$.  
\end{prop}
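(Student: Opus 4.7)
The approach is to argue by contradiction, invoking the structural theorems of Dimca--Papadima--Suciu for the first resonance variety of a $1$-formal quasi-projective group, in combination with Theorem~\ref{thm:ResonanceIntro}. Since $P\Sigma_n^+$ is $1$-formal by Berceanu--Papadima, were it quasi-projective, every positive-dimensional irreducible component $L$ of $\cR_1(P\Sigma_n^+)$ would arise as the pullback of $H^1$ via an admissible orbifold pencil from $P\Sigma_n^+$ onto $\pi_1^{\mathrm{orb}}(\Sigma)$, for some $2$-orbifold $\Sigma$. The classification of such pencils dichotomizes into: $L$ is $0$-isotropic of dimension $\ge 2$ (when $\Sigma$ is non-compact); or $L$ is $1$-isotropic of dimension $2g$ (when $\Sigma$ is a compact Riemann $2$-orbifold of genus $g\ge 1$). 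In particular $L$ must be $r$-isotropic with $r\in\{0,1\}$, and in the $1$-isotropic case $\dim L$ is even.

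I then compare this with the explicit description in Theorem~\ref{thm:ResonanceIntro}: for $2\le j<i\le n$, the component $L_{ij}$ has dimension $j$ and is $\binom{j-1}{2}$-isotropic. For $n\ge 5$, the component $L_{n,4}$ is $3$-isotropic, directly violating the bound $r\le 1$. For $n=4$ that bound is not violated on its own; however, the only positively isotropic component is $L_{43}$, which is $1$-isotropic of dimension $3$, and since $3$ is odd this is incompatible with $\dim L = 2g$ for any integer $g\ge 1$. Either way we reach a contradiction, so $P\Sigma_n^+$ is not quasi-projective for any $n\ge 4$.

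The main obstacle will be confirming the even-dimension constraint on $1$-isotropic components in the form needed above. This comes down to computing $H^1(\pi_1^{\mathrm{orb}}(\Sigma);\C)$ for a compact Riemann $2$-orbifold $\Sigma$ of genus $g$ with cone points: each cone-point generator has finite order in $\pi_1^{\mathrm{orb}}(\Sigma)$, hence contributes only torsion to the abelianization, leaving $H_1(\pi_1^{\mathrm{orb}}(\Sigma);\Z)\otimes\C\cong \C^{2g}$. Once this and the DPS dichotomy are in hand, the cases $n\ge 5$ and $n=4$ are handled by the two independent numerical obstructions (isotropicity, respectively parity of dimension) recorded in Theorem~\ref{thm:ResonanceIntro}.
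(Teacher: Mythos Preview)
Your argument is correct, and it rests on the same structural input as the paper's proof: the Dimca--Papadima--Suciu dichotomy for resonance components of a $1$-formal quasi-projective group, combined with the explicit description of $\cR_1(P\Sigma_n^+)$ from Theorem~\ref{thm:ResonanceIntro}. The difference is one of economy. The paper observes that the component $L_{43}$ is present for \emph{every} $n\ge 4$, is $3$-dimensional, and is $1$-isotropic; it then quotes the DPS obstruction in the form ``$1$-isotropic components must have dimension at least $4$,'' which disposes of all $n\ge 4$ in one stroke. You instead split into cases, invoking $L_{n,4}$ (which is $3$-isotropic) for $n\ge 5$ and $L_{43}$ with a parity argument for $n=4$. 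The case split is unnecessary, and the parity formulation you use---while a correct consequence of the DPS theory---requires the extra verification you flag concerning $H^1$ of compact $2$-orbifold groups. The paper's version of the obstruction is directly citable from \cite{Dimca-Papadima-Suciu08, Dimca-Papadima-Suciu09} and sidesteps that detour.
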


Comparing the resonance varieties of $P\Sigma_n^+$ with those of $P_n$ 
and $\Pi_n$ (already computed in \cite{Cohen-Suciu99}), 
we obtain another proof of Corollary \ref{cor:IntroIso}.
Furthermore, comparing the resonance varieties of $P\Sigma_n$ with those 
of $P\Sigma_n^+$, we obtain the following application.

\begin{prop}[Proposition \ref{prop:split}]
\label{prop:intro epi}
There is no epimorphism from $P\Sigma_n$ to $P\Sigma_n^+$ for $n\geq 4$. 

\end{prop}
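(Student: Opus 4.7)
The plan is to derive a contradiction from the functoriality of $\cR_1$ under epimorphisms. An epimorphism $\pi\colon P\Sigma_n \surj P\Sigma_n^+$ induces an injective linear map $\pi^*\colon H^1(P\Sigma_n^+;\C) \inj H^1(P\Sigma_n;\C)$ that sends $\cR_1(P\Sigma_n^+)$ into $\cR_1(P\Sigma_n)$. Since both groups are $1$-formal by \cite{Berceanu-Papadima09}, the structure theorem of Dimca--Papadima--Suciu \cite{Dimca-Papadima-Suciu08, Dimca-Papadima-Suciu09} identifies each positive-dimensional irreducible component $L$ of $\cR_1(P\Sigma_n^+)$ with the pullback of $H^1$ of an orbifold target along an admissible map $P\Sigma_n^+\to \pi_1^{\mathrm{orb}}(\Sigma)$. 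Composing with $\pi$ yields an admissible map $P\Sigma_n\to \pi_1^{\mathrm{orb}}(\Sigma)$ whose associated component of $\cR_1(P\Sigma_n)$ is precisely $\pi^*(L)$. In particular, $\pi^*$ carries each positive-dimensional component of $\cR_1(P\Sigma_n^+)$ onto a component of $\cR_1(P\Sigma_n)$ of the same dimension and the same isotropy index.

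Now compare with D.~Cohen's computation \cite{DCohen09}, according to which every irreducible component of $\cR_1(P\Sigma_n)$ has dimension at most $3$, and every $3$-dimensional component is $0$-isotropic. For $n \ge 5$, Theorem \ref{thm:ResonanceIntro} gives the component $L_{n,n-1}\subset \cR_1(P\Sigma_n^+)$ of dimension $n-1\ge 4$, which cannot fit inside any component of $\cR_1(P\Sigma_n)$, yielding a contradiction. For $n = 4$, the component $L_{4,3}\subset \cR_1(P\Sigma_4^+)$ has dimension $3$ and isotropy $\binom{2}{2}=1$ by Theorem \ref{thm:ResonanceIntro}(2), whereas its image $\pi^*(L_{4,3})$, as a $3$-dimensional component of $\cR_1(P\Sigma_4)$, would have to be $0$-isotropic, a contradiction as well.

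The main obstacle is the isotropy-preservation assertion under $\pi^*$, which is vacuous for $n\ge 5$ (where the dimension comparison alone suffices) but essential for $n=4$. Making it rigorous requires invoking the orbifold structure theorem for $1$-formal groups and verifying that the admissible map underlying each positive-dimensional component of $\cR_1(P\Sigma_n^+)$ pulls back along $\pi$ to determine an isomorphic component of $\cR_1(P\Sigma_n)$; together with the detailed isotropy information extracted from \cite{DCohen09}, this is what distinguishes $L_{4,3}$ from every $3$-dimensional component of $\cR_1(P\Sigma_4)$ and completes the argument.
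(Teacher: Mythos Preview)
Your argument for $n\ge 5$ is correct and coincides with the paper's: the injective linear map $\pi^*$ carries $\cR_1(P\Sigma_n^+)$ into $\cR_1(P\Sigma_n)$, and the $(n-1)$-dimensional component $L_{n,n-1}$ cannot sit inside a union of subspaces of dimension at most $3$.

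For $n=4$, however, there is a genuine gap. The Dimca--Papadima--Suciu structure theorem that interprets irreducible components of $\cR_1(G)$ as pullbacks along admissible maps to orbifold fundamental groups is a theorem about \emph{quasi-projective} (or quasi-K\"ahler) $1$-formal groups; for arbitrary $1$-formal groups the Tangent Cone theorem of \cite{Dimca-Papadima-Suciu09} only guarantees that $\cR_1(G)$ is a union of rationally defined linear subspaces, with no orbifold description. You are applying the orbifold interpretation to $P\Sigma_4^+$, but the paper itself proves in Proposition~\ref{prop:quasiproj} that $P\Sigma_4^+$ is \emph{not} quasi-projective---and the obstruction is exactly the $1$-isotropic, $3$-dimensional component $L_{43}$ you are trying to analyze. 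So the machinery you invoke is unavailable precisely where you need it, and your final paragraph's appeal to an ``orbifold structure theorem for $1$-formal groups'' cannot be made rigorous.

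The paper's treatment of $n=4$ is far more elementary and bypasses this issue. Since $\sigma^*(L_{43})$ is an irreducible $3$-dimensional linear subspace contained in $\cR_1(P\Sigma_4)$, which is a union of linear subspaces of dimension at most $3$, it must coincide with one of the $3$-dimensional components $C_{ijk}$. The paper then argues directly with cup products: $\sigma^*$ is a ring homomorphism, so for $a,b\in L_{43}$ with $a\cup b\ne 0$ one has $\sigma^*(a)\cup\sigma^*(b)=\sigma^*(a\cup b)\ne 0$ by injectivity of $\sigma^*$, contradicting the $0$-isotropy of every $C_{ijk}$ established in \cite{DCohen09,Cohen-Schenck15}. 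No pencil or orbifold geometry enters.
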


\subsection{Resonance scheme structure}
\label{subsec:introScheme}
As shown by Matei and Suciu in \cite{Matei-Suciu00}, 
the resonance variety $\cR_1(G)$ of a commutator-relators 
group $G$ coincides with the support variety of the annihilator of $\fB(G)$. 
It is natural then to talk about the \emph{resonance scheme}\/ of $G$
as the scheme defined by the ideal $\Ann(\fB(G))$.   The primary 
components of this ideal cut out the primary subschemes; the resonance 
scheme consists of isolated components (namely, the 
irreducible components of $\cR_1(G)$), together with 
embedded components. We say that $\cR_1(G)$ is 
\emph{weakly reduced}\/ as a scheme if the only embedded 
component of $\Ann (\fB(G))$ is the point $0\in H^1(G;\C)$.

The next theorem describes the 
resonance scheme structure of the upper McCool groups.

\begin{theorem}[Theorem \ref{thm:resonance scheme}]
\label{thm: intro scheme}
The resonance scheme of $P\Sigma_n^+$ consists of: 
\begin{description}
\item[Isolated components] The linear subspaces $L_{ij}$  ($2\le j<i\le n$)
listed in Theorem \ref{thm:ResonanceIntro}.
\item[Embedded components]  The $1$-dimensional 
linear subspaces $L_{ij}' \subset L_{ij}$ ($3\le j<i\le n$) 
defined by the equations $x_{st}=0$ with $1\leq t<s\leq n$ 
and $(s,t)\neq (i,j)$.
\end{description}
\end{theorem}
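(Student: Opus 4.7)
The plan is to identify all associated primes of $\Ann(\fB_n)$, where $\fB_n := \fB(P\Sigma_n^+)$, working from the explicit Gröbner basis presentation of $\fB_n$ as a graded $S$-module that was developed earlier in the paper. The isolated components of the resonance scheme are the minimal primes of $\Ann(\fB_n)$, and these are precisely the ideals $\fp_{ij}$ defining the subspaces $L_{ij}$ of Theorem \ref{thm:ResonanceIntro}, since those were identified as the irreducible components of the underlying variety $\cR_1(P\Sigma_n^+)$. So the substance of the theorem is pinning down the embedded primes.

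First, for each $3 \le j < i \le n$, I would exhibit a homogeneous element $w_{ij} \in \fB_n$ whose $S$-annihilator equals exactly the linear prime ideal $\fp'_{ij}$ defining $L'_{ij}$. Since $\fp_{ij} \subsetneq \fp'_{ij}$, this shows that $\fp'_{ij}$ is an associated prime that is not minimal, i.e.\ embedded. A natural place to search for $w_{ij}$ is in the graded piece of $\fB_n$ governed by the $\binom{j-1}{2}$-isotropic structure on $L_{ij}$ detected in Theorem \ref{thm:ResonanceIntro}(2): when $j\ge 3$, non-isotropy of the restricted cup product forces extra syzygies in $\fB_n$, and these syzygies collapse $w_{ij}$ off the single line $L'_{ij}\subset L_{ij}$. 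An explicit candidate can be produced by lifting a cocycle through the presentation of the infinitesimal Alexander invariant and reducing against the Gröbner basis.

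The main obstacle is showing that no further associated primes occur---in particular, that the irrelevant ideal $\fm=(x_{st})$ is not associated and that no other linear or higher-codimension primes appear. The approach I would take is to produce an explicit primary decomposition
\[
\Ann(\fB_n) \;=\; \bigcap_{2\le j<i\le n} \fq_{ij} \;\cap\; \bigcap_{3\le j<i\le n} \fq'_{ij},
\]
with $\fq_{ij}$ being $\fp_{ij}$-primary and $\fq'_{ij}$ being $\fp'_{ij}$-primary, by direct computation from the Gröbner basis, and then verify the equality by testing both sides against a generating set of $\fB_n$. A more conceptual route is to compute the modules $\Ext^i_S(\fB_n,S)$ from a minimal free resolution of $\fB_n$ and invoke $\Ass(\fB_n)=\bigcup_i \Ass(\Ext^i_S(\fB_n,S))$; checking that this union coincides with $\{\fp_{ij}\}\cup\{\fp'_{ij}\}$ completes the argument. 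Either way, the symmetry of the presentation under the $(n-1)$-step iterated semidirect product structure of $P\Sigma_n^+$ should permit an inductive organization of the computation on $n$.
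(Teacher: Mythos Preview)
Your outline is reasonable but leaves the hard step---bounding the associated primes from above---as a promise rather than a proof, and the paper closes that gap by a different and much cleaner device than either of your two suggestions.

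The paper does not attempt an explicit primary decomposition of $\Ann(\fB_n)$, nor does it compute $\Ext$ modules. Instead it builds a short exact sequence of graded $S$-modules
\[
0 \longrightarrow K_n \longrightarrow \fB_n \longrightarrow \fB_n' \longrightarrow 0,
\]
where $\fB_n'$ is the quotient of $\fB_n$ by the classes $x_{kp}\cdot r_{ijk}$ (for $p<k$), and $K_n$ is identified explicitly as $\bigoplus_{3\le j<i\le n}\bigoplus^{\binom{j-1}{2}} S/I_{ij}$ with $I_{ij}$ the linear prime cutting out $L'_{ij}$. Both flanking modules have transparently computable associated primes: $\Ass(\fB_n')=\{\fp_{ij}\mid 2\le j<i\le n\}$ (shown to be reduced by a direct argument using the block-triangular Gr\"obner presentation), and $\Ass(K_n)=\{\fp'_{ij}\mid 3\le j<i\le n\}$ (immediate from the direct-sum description). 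The standard inclusions $\Ass(K_n)\subseteq\Ass(\fB_n)\subseteq\Ass(K_n)\cup\Ass(\fB_n')$, together with the already-known fact that every $\fp_{ij}$ is a minimal prime of $\fB_n$, force equality. This is what rules out the irrelevant ideal and any other stray primes in one stroke.

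Your witness idea for the embedded primes is in fact realized by the paper's map $\phi\colon e_{ijkl}\mapsto x_{kl}r_{ijk}$: the images are exactly the elements whose annihilators are the $\fp'_{ij}$. What your sketch lacks is a mechanism replacing the short exact sequence for the upper bound; producing a full primary decomposition by hand from the Gr\"obner basis, or resolving $\fB_n$ far enough to read off all $\Ext$ supports, would be substantially more work and is not what the paper does.
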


In particular (Corollary \ref{cor:notreduced}), the resonance variety 
$\cR_1(P\Sigma_n^+)$ is not weakly reduced as a scheme for $n\geq 4$.

The scheme structure of $\cR_1(G)$ is crucial for studying the 
relationship between the first resonance variety and the Chen ranks 
of a group $G$. It was conjectured in \cite{Suciu01} that the 
Chen ranks of an arrangement group $G$ are given by
\begin{equation}
\label{eq:ChenranksConj}
\theta_k(G)= \sum_{m\geq 2}h_m(G) \cdot \theta_k(F_m),   \textrm{ for $k\gg 0$},
\end{equation}
where $h_m(G)$ is the number of $m$-dimensional components of $\cR_1(G)$.
In \cite{Cohen-Schenck15}, Cohen and Schenck proved the conjecture for 
the wider class of $1$-formal groups which have weakly reduced resonance 
schemes, and $0$-isotropic, projectively disjoint irreducible components.
They also showed that the first resonance variety of the full McCool 
group $P\Sigma_n$ is weakly reduced.   Furthermore,  the Chen ranks 
formula works for $P\Sigma_n$, from which they deduced that 
$\theta_k(P\Sigma_n) = (k-1)\binom{n}{2} + (k^2-1) \binom{n}{3}$  for $k\gg 0$. 

Theorem \ref{thm:ChenRanksIntro} provides a closed formula for the Chen ranks 
$\theta_k(P\Sigma_n^+)$ for $k\geq 1$. Rather surprisingly, it turns out that 
the Chen ranks formula does not apply to $P\Sigma_n^+$ for $n\geq 4$. 
There are two reasons for that: firstly, the resonance variety $\cR_1(P\Sigma_n^+)$ 
contains non-isotropic components, and secondly, $\cR_1(P\Sigma_n^+)$ is not 
weakly reduced as a scheme.  The computation of the scheme structure also 
shows how the embedded components affect the Chen ranks. This provides 
us with a benchmark test case for a generalized Chen ranks formula, which  
is the subject of ongoing work.
 
\section{Alexander invariant and Chen ranks}
\label{sect:alexchen}

We start by reviewing several  invariants 
associated to a finitely generated group, mainly the 
lower central series ranks, the Chen ranks, and the 
Alexander invariant.  We then discuss the infinitesimal version of the Alexander 
invariant, this time associated to a graded Lie algebra.

\subsection{Associated graded Lie ring and Chen ranks}
\label{subsec:chen}
Throughout, $G$ will be a finitely generated group. 
The terms of the lower central series (LCS) of $G$ 
are defined inductively by $\Gamma_1 G=G$ and 
$\Gamma_kG=[G,\Gamma_{k-1}G]$ for $k\geq 2$.  It is readily seen 
that $\Gamma_{k+1}G$ is a normal subgroup of $\Gamma_{k}G$, 
and the quotient group, $\gr_k(G)=\Gamma_{k}G/\Gamma_{k+1}G$, 
is a finitely generated abelian group. 
The {\em associated graded Lie ring}\/ of $G$ is the direct sum
\begin{equation}
\label{eq:gig}
\gr(G)= \bigoplus_{k\geq 1} \Gamma_{k}G/\Gamma_{k+1}G,
\end{equation}
with Lie bracket $[\:,\:]\colon \gr_k(G)\times \gr_{\ell}(G)\to \gr_{k+\ell}(G)$ induced 
by the group commutator.   By definition, the LCS ranks of $G$ are the integers 
$\phi_k(G):=\rank \gr_k(G)$.

Now let $G'=\Gamma_2 G$ be the derived subgroup of $G$, 
and let $G'' =[G',G']$ the second derived subgroup.  
Then $G_{\ab}:=G/G^{\prime}$ is the maximal abelian quotient of $G$, 
whereas $G/G''$ is the maximal metabelian quotient of $G$. 
Following  \cite{Chen51, Chen73}, let us define the {\em Chen ranks}\/ 
of $G$ as the LCS ranks of $G/G''$:
\begin{equation}
\label{eq:chen ranks}
\theta_k(G):=\rank \gr_k(G/G^{\prime\prime}).
\end{equation}
It is readily seen that $\theta_k(G)\le \phi_k(G)$, with equality for $k\le 3$. 

\subsection{Alexander invariant and Chen ranks}
\label{subsec:AlexInv}

Let $\Z[G]$ be the group ring of $G$, let  $\varepsilon\colon \Z{G}\to \Z$ 
be the augmentation homomorphism, defined by $\varepsilon(g)=1$ for 
$g\in G$, and let $I=\ker \varepsilon$ be the augmentation ideal. 
The \emph{Alexander module}, $A(G)= I \otimes_{\Z{G}} \Z{G_{\ab}}$, 
is the $\Z{G_{\ab}}$-module induced from $I$ by the extension of 
the abelianization map $\alpha\colon G \rightarrow G/G^{\prime}$ to group rings.
The \emph{Alexander invariant}\/ of $G$ is the $\Z{G}_{\ab}$-module 
\[
B(G)=G^{\prime}/G^{\prime\prime},
\]
with the group $G_{\ab}$ acting on the cosets of $G^{\prime\prime}$ 
via conjugation. Since the group $G$ is finitely generated, both   
$A(G)$ and $B(G)$ are finitely generated $\Z{G}_{\ab}$-modules.
 
As shown by W. Massey in \cite{Massey80}, the Chen ranks can be computed 
from the Alexander invariant using the group extension
\begin{equation}
\label{eq:bgmod}
\xymatrixcolsep{18pt}
\xymatrix{
0\ar[r] &G'/G''\ar[r] &G/G'' \ar[r] & G/G'\ar[r] & 0
}.
\end{equation}

More precisely, let us filter both the group ring $\Z{G_{\ab}}$ 
and the module $B(G)$ by the powers of the 
augmentation ideal $J=\ker(\varepsilon_{\ab}\colon \Z{G_{\ab}}\to \Z)$, 
and let us take the associated graded module, 
$\gr(B(G))=\bigoplus_{k\ge 0} J^kG/J^{k+1}G$, 
viewed as a module over the ring $\gr(\Z{G_{\ab}})$.  Identifying this ring 
with the symmetric algebra $S=\Sym(G_{\ab})$ in a canonical fashion,   
we may view $\gr(B(G))$ as a (finitely generated) $S$-module.  
The following equality then holds, 
for all $k\geq 0$:
\begin{equation}
\label{eq:massey}
\theta_{k+2}(G)=\rank \gr_{k}(B(G))\, .
\end{equation}


 
\subsection{Infinitesimal Alexander invariant of an algebra}
\label{subsec:infiAlexInv}
 
Let $\fg=\bigoplus_{k\ge 1} \fg_k$ be a finitely generated, graded 
Lie algebra over $\C$.  We denote by $S$ the universal enveloping 
algebra of its abelianization, $\fg/\fg^{\prime}$.  We will identify this 
algebra with the symmetric algebra $S=\Sym(\fg_1)$, with variables in 
degree $1$. 

Following \cite{Papadima-Suciu04}, let us define 
the \textit{infinitesimal Alexander invariant}\/ of $\fg$ 
to be the graded $S$-module 
\begin{equation}
\label{eq:infalex}
\fB(\fg):=\fg^{\prime}/\fg^{\prime\prime}.
\end{equation}
The exact sequence of graded Lie algebras
\begin{equation}
\label{eq:exacts}
\xymatrixcolsep{18pt}
\xymatrix{
0\ar[r] &\fg^{\prime}/\fg^{\prime\prime}\ar[r] &
\fg/\fg^{\prime\prime}\ar[r] & \fg/\fg^{\prime}\ar[r] & 0
}
\end{equation}
defines the required graded $S$-module structure on $\fB(\fg)$.
 
Now let $A=\bigoplus_{i\geq 0}A^i$ be graded, graded-commutative 
algebra over $\C$. We shall assume that $A$ is connected (i.e., 
$A^0=\C$, generated by the unit $1$), and locally finite 
(i.e., $A^i$ has finite dimension, for each $i\geq 1$). 
Write $V=A^1$, and let $\partial_A\colon (A^2)^*\rightarrow V^*\wedge V^*$ 
be the dual of the multiplication map $\mu_A \colon V\wedge V\to A^2$, 
where we identified $(V\wedge V)^* \cong V^*\wedge V^*$. 
The \textit{holonomy Lie algebra}\/ of $A$ is defined to be the quotient 
\begin{equation}
\label{eq:hololiealg}
\fh(A)=\Lie(V^*)/\langle\im \partial_A\rangle 
\end{equation}
of the free Lie algebra on $V^*$ by the ideal generated by the 
image of $\partial_A$. By construction, $\fh(A)$ is a finitely 
presented, quadratic Lie algebra.  

By definition, the \emph{infinitesimal Alexander invariant}\/ 
of $A$ is the graded $S$-module 
\begin{equation}
\label{eq:fba}
\fB(A):=\fB(\fh(A)),
\end{equation}
where $S=\Sym(\fh_1(A))$ is canonically identified with $\Sym(V^*)$.
From the exact sequence \eqref{eq:exacts}, we have the equality
\begin{equation}
\label{eq:holoChenHilbert}
\sum\limits_{k\geq0}\theta_{k+2}(A)\cdot t^k={\rm Hilb}(\mathfrak{B}(A),t),
\end{equation}
where $\theta_{i}(A):=\dim((\fh(A)/\fh(A)'')_i)$ is the $i$-th \emph{Chen rank}\/ 
of $A$. It is readily seen  that 
$\fh(A)$ and $\fB(A)$ coincide with the holonomy Lie algebra and the
infinitesimal Alexander invariant of the 
quadratic closure of $A$, 
\begin{equation}
\label{eq:quadclose}
\bar{A}=E /\langle \ker\mu_A\rangle, 
\end{equation} 
where $E=\bigwedge V$  and $\langle \ker\mu_A\rangle$ is the 
ideal generated by $\ker \mu_A$. We refer to \cite{SW-mz, SW-formality, SW-holo} 
for full details of this construction, and further references and background.

\subsection{Infinitesimal Alexander invariant of a $1$-formal group}
\label{subsec:chen-formal}

Let $G$ be a finitely generated group, and   
suppose that $H^{2}(G;\C)$ is finite-dimensional. 
We define then the \emph{holonomy Lie algebra}\/  
of $G$ as $\fh(G):=\fh(A)$, where $A=H^{\le 2}(G,\C)$ 
is the degree-$2$ truncation of the cohomology algebra of $G$.  
The \emph{infinitesimal Alexander invariant}\/ of $G$ is 
then the graded $S$-module $\fB(G):=\fB(A)$,  where 
$S$ is the symmetric algebra on $H_1(G;\C)$.  
Note that this module is finitely generated (in degree $0$), 
and therefore admits a finite presentation of the form
$S^m\xrightarrow{\alpha} S^n \rightarrow \fB(G)$.
Choosing bases for $S^m$ and $S^n$, we may view the map $\alpha$
as a matrix with $n$ rows and $m$ columns, having entries in $S$.  

For each nilpotent quotient $G/\Gamma_iG$, there is a filtered 
$\C$-Lie algebra $\fm(G/\Gamma_iG)$, whose construction goes 
back to Anatoli Malcev. 
The \emph{Malcev Lie algebra}\, of $G$ is defined to be the inverse limit 
$\fm(G):=\varprojlim_k \fm(G/\Gamma_k G)$, see for instance 
\cite{Papadima-Suciu04, Papadima-Suciu09, SW-formality} for details and references.
We say that the group $G$ is \emph{$1$-formal}\/ if there exists a filtered Lie algebra isomorphism 
between the Malcev Lie algebra $\fm(G)$ and the degree completion of $\fh(G)$.

If $G$ is a $1$-formal group, then, as shown in \cite{Papadima-Suciu04} in the 
commutator-relators case, and in \cite{SW-holo} in general, the following equality holds:
\begin{equation}
\label{eq:ChenHilbert}
\sum\limits_{k\geq0}\theta_{k+2}(G)\cdot t^k={\rm Hilb}(\mathfrak{B}(G),t).
\end{equation}

\subsection{Presentations for $\fB(A)$}
\label{subsec:presba}

Now suppose $\fg$ admits a finite, quadratic presentation, that is,
$\fg=\Lie(H)/\langle K \rangle$, where $H$ is a finite-dimensional 
$\C$-vector space, $K$ is a finite set of degree-two elements in 
the free Lie algebra $\Lie(H)$, and $\langle K \rangle$ is the 
Lie ideal generated by $K$.  Then, by \cite{Papadima-Suciu04}, 
the $S$-module $\fB(\fg)$ admits a homogeneous, finite presentation 
of the form 
\begin{equation}
\label{eq:PresentationPS04}
\xymatrixcolsep{18pt}
\xymatrix{ \Big(\big(\bigwedge^3H\big)  \oplus K \Big) \otimes S
\ar^(.58){\delta_3+(\id\otimes\, \iota)}[rr]&& 
 \bigwedge^2H \otimes S \ar[r]& \fB(\fg)  \ar[r]& 0},
\end{equation}
where $\iota$ is the inclusion of $\mathfrak{a}$ into $\Lie(H)_2\cong H\wedge H$, 
and $\delta_3$ is the Koszul differential.

As before, let $A$ be a graded, graded-commutative, locally finite, 
connected algebra. The algebra $A$ may be viewed as an $E$-module, 
where $E=\bigwedge V$ is the exterior algebra on the vector space 
$V=A^1$.  Pick a basis $\{e_1,\dots, e_n\}$ for $V$ and let 
$\{x_1,\dots,x_n\}$ be the dual basis for $V^*$; 
then $S=\Sym(V^*)$ may be identified with the polynomial ring $\C[x_1,\dots, x_n]$.
When applied to the $E$-module $A$, the Bernstein--Gelfand--Gelfand correspondence 
(see e.g.~\cite[\S 7B]{Eisenbud05}) yields a cochain complex of free $S$-modules, 
\begin{equation}
\label{eq:universalAomoto}
\xymatrix{
\bL(A): \  \: A^0\otimes S\ar[r]^(.65){d^0}&A^1\otimes S\ar[r]^{d^1}&A^2\otimes S
\ar[r]^(.53){d^2} &\cdots, 
}
\end{equation}
with differentials given by 
\begin{equation}
\label{eq:AomotoDiff}
d^i(u\otimes s)=\sum\limits_{j=1}^ne_ju\otimes x_js
\end{equation}
for $u\in A^i$ and $s\in S$. In particular, $\bL(E)$ is the dual of the Koszul complex.

Let $I:=\langle \ker\mu_A\rangle$ be the (graded) ideal of $E$ generated by 
$\ker \mu_A$ as in \eqref{eq:quadclose},
and denote $\iota\colon I\to E$ the inclusion map.
 By construction, $I^2=\ker \mu_A$.  
Hence, we have a commuting diagram,
\begin{equation}
\label{diagram:Phi}
\begin{gathered}
\xymatrixcolsep{28pt}
\xymatrix{
0\ar[r] &I^2\otimes S\ar[rd]^{\Phi} \ar[r]^{\iota\otimes \id}\ar[d]_{d^2_I} 
&E^2\otimes S\ar[d]_{d^2_E} \ar[r]^{\mu_A\otimes \id} 
 &\bar{A}^2\otimes S\ar[d]_{d^2_{\bar{A}}} \ar[r]&0 \phantom{\, .}\\
0\ar[r] &  I^3\otimes S\ar[r] &E^3\otimes S \ar[r] 
&\bar{A}^3\otimes S \ar[r]&0\, ,
}
\end{gathered}
\end{equation}
where $\Phi$ is the composite  
$d^2_E \circ (\iota\otimes \id) \colon I^2\otimes S\rightarrow E^3\otimes S$.  
  
In the next lemma, we obtain another presentation 
for $\fB(A)$ which has a minimal generating set. 
This result generalizes formula (2.5) from \cite{Cohen-Schenck15}, where 
Cohen and Schenck give a presentation for the linearized Alexander invariant 
of a commutator-relators group.

\begin{lemma}
\label{lem:presentationAlex}
The dual of the $S$-linear map $\Phi\colon I^2\otimes S\rightarrow E^3\otimes S$ 
provides a presentation for the infinitesimal Alexander invariant $\fB(A)$.
In addition, any basis for the vector space dual of $I^2$ gives a minimal generating 
set for the $S$-module $\fB(A)$. 
 \end{lemma}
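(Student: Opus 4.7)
The plan is to begin from the Papadima--Suciu presentation \eqref{eq:PresentationPS04}, specialized to the holonomy Lie algebra $\fg=\fh(A)$ with $H=V^*$ and $K=\im\partial_A$, and then show that it collapses onto the dual of $\Phi$ once one quotients out the relations supplied by the $K$-summand. Using the short exact sequence $0\to I^2 \to E^2 \to \bar A^2\to 0$, one has canonical identifications $K\cong (\bar A^2)^*$ and $\iota\colon (\bar A^2)^*\hookrightarrow \bwedge^2 V^*$ dual to the surjection $\mu_A\colon E^2\twoheadrightarrow \bar A^2$. In this language \eqref{eq:PresentationPS04} reads
\[
\bigl(\bwedge^3 V^* \oplus (\bar A^2)^*\bigr)\otimes S
\xrightarrow{\delta_3+(\id\otimes \iota)} \bwedge^2 V^*\otimes S \longrightarrow \fB(A)\longrightarrow 0.
\]

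First I would dualize $0\to I^2\to E^2\to \bar A^2\to 0$ and tensor with the flat $\C$-module $S$, producing a surjection $\pi\otimes\id\colon \bwedge^2 V^*\otimes S\twoheadrightarrow (I^2)^*\otimes S$ whose kernel is precisely the sub-$S$-module $\im(\id\otimes\iota)=(\bar A^2)^*\otimes S$. Factoring this submodule out of both source and target of the Papadima--Suciu presentation annihilates the $(\bar A^2)^*$ summand entirely and leaves
\[
\bwedge^3 V^*\otimes S \xrightarrow{(\pi\otimes\id)\circ \delta_3} (I^2)^*\otimes S\longrightarrow \fB(A)\longrightarrow 0.
\]
Next I would identify this residual map with $\Phi^*$. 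Under the dual-basis identification $(E^i)^*\cong \bwedge^i V^*$, formula \eqref{eq:AomotoDiff} shows that the $S$-linear dual of the BGG differential $d^2_E$ is exactly the Koszul differential $\delta_3$ (their matrices are transposes with entries $x_j$), while the $S$-linear dual of $\iota\otimes\id\colon I^2\otimes S\hookrightarrow E^2\otimes S$ is $\pi\otimes\id$. Dualizing $\Phi=d^2_E\circ(\iota\otimes\id)$ therefore yields $\Phi^*=(\pi\otimes\id)\circ \delta_3$, which is the map we have just obtained. This establishes the first assertion.

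For the minimality claim, the graded Nakayama lemma reduces matters to checking that the matrix of $\Phi^*$ has entries in the irrelevant ideal $\fm=(x_1,\dots,x_n)\subset S$. This is immediate from \eqref{eq:AomotoDiff}: every entry of $d^2_E$ is a $\C$-linear combination of the variables $x_j$, and the same is therefore true of its transpose $\Phi^*$. Reducing the presentation modulo $\fm$ then gives $\fB(A)\otimes_S\C\cong (I^2)^*$, so any $\C$-basis of $(I^2)^*$ lifts to a minimal set of $S$-module generators for $\fB(A)$. The main obstacle, such as it is, is bookkeeping the various dualities and sign conventions relating $\delta_3$ to $d^2_E$ and $\iota$ to $\pi$; once those identifications are nailed down, the rest of the argument is entirely formal.
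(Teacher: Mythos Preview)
Your argument is correct and follows essentially the same route as the paper: both start from the Papadima--Suciu presentation \eqref{eq:PresentationPS04}, identify $K=\im\partial_A$ with $(\bar A^2)^*$, and then recognize the induced map $\bwedge^3 V^*\otimes S\to (I^2)^*\otimes S$ as $\Phi^*$ by dualizing the factorization $\Phi=d^2_E\circ(\iota\otimes\id)$. The only substantive difference is in the minimality claim: the paper verifies it by a direct dimension count, showing $\dim\fB(A)_0=\dim\fh(A)_2=\dim\coker\partial_A=\dim I^2$, whereas you invoke graded Nakayama after observing that every entry of $\Phi^*$ lies in $\fm$; these are equivalent, and your version is arguably cleaner.
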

 
\begin{proof}
By definition, the ideal $I$ of the exterior algebra $E=\bigwedge V$ 
is generated by the vector space $I^2=\ker \mu_A$.  Taking the dual spaces,  
we have an isomorphism $(I^2)^*\cong \coker \partial_A$, where $\partial_A$ is the dual of
the multiplication $\mu_A$.
Recall that the map $\Phi$ was defined as the composite 
\begin{equation}
\label{eq:PhiDiagram}
\xymatrixcolsep{28pt}
\xymatrix{
I^2\otimes S \ar@/^1.5pc/[rr]^{\Phi} \ar[r]^(.46){\iota \otimes \id} 
&\bigwedge^2 V\otimes S \ar[r]^(.43){d^2_E} &\bigwedge^3 V\otimes S
},
\end{equation}
 where $d^2_E$ is the dual of the Koszul differential.
 All $S$-modules in the above diagram are free modules. 
Taking duals, we obtain the diagram 
\vspace{-2pt}
\begin{equation}
\label{eq:PhiDiagramDual}
\xymatrix{
\coker(\partial_A)\otimes S&\bigwedge^2 V^*\otimes S  \ar[l]_(.4){(\iota\otimes\id)^*}
&\bigwedge^3 V^*\otimes S\ar@/_1.6pc/[ll]_{\Phi^*}  \ar[l]_(.53){(d_E^2)^*} 
}.
\end{equation}
Here, $(\iota\otimes \id)^*$ coincides with the projection map and $(d^2_E)^*$ coincides with the Koszul differential 
$\delta_3$ from \eqref{eq:PresentationPS04} in the case when $\fg=\fh(A)$.
It follows that $\fB(A)$ is isomorphic to $\coker \Phi^*$, as claimed.

To prove the last assertion,  we may assume without loss of generality 
that $A=\bar{A}=E/\langle I^2\rangle $. 
From \eqref{eq:holoChenHilbert}, we have that 
$\dim \fB(A)_0 = \theta_2(A)=\dim \fh(A)_2$;  
in view of \eqref{eq:hololiealg}, then, 
\begin{equation}
\label{eq:dimb0}
\dim \fB(A)_0  = \dim\big(\bwedge^2 V^*\big)-\dim (\im \partial_A)=
\dim (\coker \partial_A)=\dim I^2.
\end{equation}
Since the $S$-module  $\fB(A)$ is generated by $\fB(A)_0$, we 
conclude that the generating set for $\fB(A)$ given by a basis 
for the dual of $I^2$ is indeed a minimal generating set.
\end{proof} 

\section{The infinitesimal Alexander invariant of the upper McCool groups}
\label{sec:inf alexinv}
In this section, we give a presentation for the infinitesimal Alexander invariant 
of the upper McCool groups, and simplify this presentation to a minimal presentation.
 
\subsection{The upper McCool groups}
\label{subsec:upperMcCool} 
Let $F_n$ be the free group on generators $x_1,\dots, x_n$, and let 
$\Aut(F_n)$ be its automorphism group.  Recall that the  basis-con\-jugating group 
$P\Sigma_n$ is the subgroup of $\Aut(F_n)$ consisting of those automorphisms 
which send each generator $x_i$ to a conjugate of itself.  

In \cite{McCool86}, James McCool gave a presentation for  
$P\Sigma_n$, a group also known nowadays as the {\em McCool group}, or 
the pure welded braid group.   This presentation has generators $\alpha_{ij}$ 
(the automorphism sending $x_i$ to $x_jx_ix_j^{-1}$) 
for $1\leq i\neq j\leq n$, and relations
\begin{align}
\label{eq:McCoolGroup}
\alpha_{ij}\alpha_{ik}\alpha_{jk}&=\alpha_{jk}\alpha_{ik}\alpha_{ij} \notag\\
[\alpha_{ik},\alpha_{jk}]&=1 &&\text{for distinct $i,j,k$},\\
[\alpha_{ij},\alpha_{st}]&=1 &&\text{if $\{i,j\}\cap \{s,t\}=\emptyset$}.\notag
\end{align}
It follows at once that $P\Sigma_1=\{1\}$ and $P\Sigma_2=F_2$.   Since 
there is not much else to be said in those cases, we will usually concentrate 
on the case when $n\ge 3$.

The subgroup of $P\Sigma_n$ generated by the elements $\alpha_{ij}$ with 
$i>j$ is called the \textit{upper triangular McCool group}, and is denoted by 
$P\Sigma_n^{+}$.  It readily seen that $P\Sigma_1^+=\{1\}$, $P\Sigma_2^+= \Z$,  
and $P\Sigma_3^+\cong F_2\times \Z$.

Work of Berceanu and Papadima from \cite{Berceanu-Papadima09} 
establishes the $1$-formality of all these groups.

\begin{theorem}[\cite{Berceanu-Papadima09}]
\label{thm:McCoolFormal}
The McCool groups $P\Sigma_n$, as well as their upper triangular 
subgroups $P\Sigma_n^{+}$ are $1$-formal, for all $n\ge 1$. 
\end{theorem}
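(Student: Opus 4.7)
\emph{Plan.} My strategy is to exploit the structure of both families of groups as iterated almost-direct products of finitely generated free groups, and to deduce $1$-formality inductively from a preservation theorem for such extensions.

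First, I would use the well-known semidirect product decompositions $P\Sigma_n \cong F_{n-1} \rtimes P\Sigma_{n-1}$ and $P\Sigma_n^+ \cong F_{n-1} \rtimes P\Sigma_{n-1}^+$, together with the crucial observation (already referenced in the Introduction) that in each case the monodromy acts trivially on $H_1(F_{n-1};\Z)$. Iterating these decompositions presents each group as an almost-direct product $F_{n-1} \rtimes \cdots \rtimes F_2 \rtimes F_1$ of free groups. The base cases $n \le 2$ are immediate: $P\Sigma_1 = P\Sigma_1^+ = \{1\}$, $P\Sigma_2^+ = \Z$, and $P\Sigma_2 = F_2$ are all patently $1$-formal, and free groups are themselves the archetypal examples of $1$-formal groups.

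Second, I would invoke (or reestablish) the following criterion: if $G = N \rtimes Q$ is an almost-direct product in which $N$ is a finitely generated free group and $Q$ is already $1$-formal, and if the monodromy $\varphi \colon Q \to \Aut(N)$ lifts to a filtered Lie automorphism of the Malcev completion $\fm(N)$ whose induced map on the associated graded is trivial, then $G$ is $1$-formal. Granting this criterion, the inductive step is to verify the lifting condition for the Magnus generators $\alpha_{ij}$ acting on the next free factor in the tower, using the explicit conjugation formulas imposed by the McCool relations \eqref{eq:McCoolGroup}. One then also has to use compatibility of the Malcev functor with semidirect products to conclude that $\fm(G) \cong \fm(N) \rtimes \fm(Q)$ matches the corresponding splitting of the degree completion of $\fh(G)$ assembled from $\fh(N)$ and $\fh(Q)$.

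The main obstacle is precisely the verification of the required monodromy lifting. Although each Magnus generator acts trivially on $H_1$ of the free kernel, one must control its action on every subsequent LCS quotient and exhibit a compatible filtered Lie automorphism with trivial graded map. This reduces to a careful analysis of basis-conjugating automorphisms of a free group: although these are not inner automorphisms of the ambient free factor, they can be shown to act on $\gr(F_{n-1}) \otimes \C$ as derivations of positive degree, so that their formal logarithm produces the desired filtered automorphism of $\fm(F_{n-1})$ inducing the identity on the associated graded. Once this computation is in hand, $1$-formality propagates up the tower, simultaneously handling $P\Sigma_n$ and $P\Sigma_n^+$.
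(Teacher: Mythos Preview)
The paper does not prove this theorem; it is quoted from Berceanu--Papadima \cite{Berceanu-Papadima09} and used as a black box. So there is no in-paper argument to compare against. That said, your proposed strategy has a genuine gap: the preservation criterion you want to invoke is false as stated.

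Here is a counterexample. Take $N=F_3=\langle x_1,x_2,x_3\rangle$, $Q=\Z=\langle t\rangle$, and let $t$ act by the IA-automorphism $\phi$ fixing $x_1,x_2$ and sending $x_3\mapsto [x_1,x_2]\,x_3\,[x_1,x_2]^{-1}$. Then $G=F_3\rtimes_\phi\Z$ satisfies all your hypotheses: $Q$ is $1$-formal, $N$ is free, the monodromy is trivial on $H_1(N)$, and the induced filtered automorphism of $\fm(N)$ has trivial associated graded. Yet $G$ is \emph{not} $1$-formal. Indeed, in $\widehat{\fh(G)}$ the generator $T$ is central (one computes $x_i^*\cup x_j^*=0$ and the classes $t^*\cup x_i^*$ span $H^2$, so the holonomy relations are exactly $[T,X_i]=0$), whereas in $\fm(G)=\widehat{\Lie}(X_1,X_2,X_3)\rtimes\C T$ one has $[T,X_3]=[[X_1,X_2],X_3]+\cdots\neq 0$, and a short centralizer argument in the free Lie algebra shows no filtered change of variables can make $T$ central. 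So ``trivial on associated graded'' is too weak a hypothesis.

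What is special about the McCool monodromies is stronger than what you isolate: each $\alpha_{ij}$ acts on $\fm(F_{n-1})$ as $\exp(D)$ for a derivation $D$ of degree \emph{exactly one} (namely $D(X_i)=[X_j,X_i]$, $D(X_k)=0$ otherwise), not merely of positive degree. This ``quadratic'' nature of the action is what one ultimately needs, and establishing that it forces $\widehat{\fh(G)}$ and $\fm(G)$ to share the same semidirect-product structure still requires identifying the holonomy Lie algebra of the extension with $\fh(N)\rtimes\fh(Q)$ --- i.e., controlling the cup-product map on $H^1(G)$, which is essentially the content of the Berceanu--Papadima argument. Their proof proceeds instead by constructing an explicit multiplicative expansion (a ``universal representation'') $G\to\exp(\widehat{\fh(G)})$ inducing an isomorphism on Malcev completions, in the spirit of the Kohno--Drinfeld approach to $P_n$; this sidesteps the inductive bookkeeping entirely.
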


In \cite{Jensen-McCammond-Meier06}, Jensen, McCammond, 
and Meier computed the cohomology ring of $P\Sigma_n$, 
thereby verifying a long-standing conjecture of Brownstein and Lee.  
Shortly after, the integral cohomology ring of $P\Sigma_n^+$ was 
computed by F.~Cohen et al.~\cite{Cohen-P-V-Wu08}, as follows.

\begin{theorem}[\cite{Cohen-P-V-Wu08}]
\label{thm:Cohen-P-V-Wu}
The cohomology algebra $A=H^*(P\Sigma_n^+;\Z)$ is the graded, graded-commutative
(associative) algebra generated by degree $1$ elements $u_{ij}$ with $1\leq j<i\leq n$,
subject to the relations $u_{ij}(u_{ik}-u_{jk})=0$ for $k<j<i$.
\end{theorem}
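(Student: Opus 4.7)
The plan is to induct on $n$, exploiting the well-known splitting
\begin{equation*}
1\to F_{n-1}\to P\Sigma_n^+\to P\Sigma_{n-1}^+\to 1,
\end{equation*}
where $F_{n-1}$ is freely generated by $\alpha_{n,1},\dots,\alpha_{n,n-1}$ and the quotient map kills exactly these top-row generators. The base cases $n\le 3$ I would read off from the known presentations $P\Sigma_2^+\cong\Z$ and $P\Sigma_3^+\cong F_2\times\Z$. For the inductive step, I would first verify both the freeness of this top-row subgroup and the triviality of the conjugation action of $P\Sigma_{n-1}^+$ on $H_1(F_{n-1};\Z)$: freeness follows because no McCool relation among the upper-triangular generators involves only $\alpha_{n,*}$'s, while the triviality on homology is visible after abelianizing each conjugation formula (for instance, the relation $\alpha_{n,j}\alpha_{n,k}\alpha_{j,k}=\alpha_{j,k}\alpha_{n,k}\alpha_{n,j}$ yields $\alpha_{j,k}^{-1}\alpha_{n,j}\alpha_{j,k}=\alpha_{n,k}\alpha_{n,j}\alpha_{n,k}^{-1}$, whose abelianization is the identity).

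With the fibration $K(F_{n-1},1)\to K(P\Sigma_n^+,1)\to K(P\Sigma_{n-1}^+,1)$ in hand, I would apply the Lyndon--Hochschild--Serre spectral sequence with trivial coefficients. Since $H^*(F_{n-1};\Z)$ is concentrated in degrees $0$ and $1$, the $E_2$ page is
\begin{equation*}
E_2^{p,q}=H^p(P\Sigma_{n-1}^+;\Z)\otimes H^q(F_{n-1};\Z),
\end{equation*}
and the sole possibly nonzero differential is the transgression $d_2$ on the fiber classes. Writing $u_{n,j}\in H^1(F_{n-1};\Z)$ for the class dual to $\alpha_{n,j}$, I would compute $\tau(u_{n,j})\in H^2(P\Sigma_{n-1}^+;\Z)$ by extracting the extension $2$-cocycle from McCool's presentation. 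The key identification is that the relation $\alpha_{n,j}\alpha_{n,k}\alpha_{j,k}=\alpha_{j,k}\alpha_{n,k}\alpha_{n,j}$ (for $k<j<n$) translates via the transgression into the cup-product identity $u_{n,j}(u_{n,k}-u_{j,k})=0$; the other upper-triangular relations either commute disjoint generators, lie entirely inside $P\Sigma_{n-1}^+$ (already accounted for inductively), or are the commutators $[\alpha_{n,k},\alpha_{j,k}]$ which contribute the vanishing already subsumed in the above.

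To close the argument I would match Hilbert series: since $P\Sigma_n^+$ is an iterated almost-direct product of free groups of ranks $1,2,\dots,n-1$, its integral Poincar\'e polynomial is $\prod_{i=1}^{n-1}(1+it)$, and a routine monomial-basis count for the quadratic algebra generated by the $u_{ij}$ modulo the relations $u_{ij}(u_{ik}-u_{jk})$ yields the same polynomial, ruling out both extra relations and any torsion. The main obstacle is pinning down the transgression at the chain level, in particular verifying that no relations among the $u_{n,j}$ or between $u_{n,j}$ and the old classes $u_{st}\in H^1(P\Sigma_{n-1}^+)$ arise beyond those listed. I expect to handle this via an explicit Fox-calculus computation of the $2$-cocycle encoding the extension, using that the iterated almost-direct product structure produces a skew-tensor factorization that matches the right-hand algebra summand by summand.
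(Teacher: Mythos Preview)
First, a framing remark: the paper does not prove this statement; it is quoted from \cite{Cohen-P-V-Wu08} and used as input for the rest of the work. So there is no in-paper argument to compare against, and I will assess your sketch on its own merits.

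Your inductive setup via the split extension $1\to F_{n-1}\to P\Sigma_n^+\to P\Sigma_{n-1}^+\to 1$, the verification that the monodromy acts trivially on $H_1(F_{n-1};\Z)$, and the Hilbert-series count at the end are all correct and are exactly the ingredients one expects. The gap is in the middle. You assert that the new relations $u_{n,j}(u_{n,k}-u_{j,k})=0$ are produced by the transgression $d_2$ in the Lyndon--Hochschild--Serre spectral sequence. But the extension is \emph{split}, so every differential vanishes: the section forces $E_\infty^{p,0}=E_2^{p,0}$, and since $H^q(F_{n-1})=0$ for $q\ge 2$ the only possible differentials are the $d_2$'s landing on the bottom row. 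In particular $\tau(u_{n,j})=0$ for all $j$, and likewise the extension $2$-cocycle you propose to extract is trivial. The spectral sequence therefore degenerates at $E_2$ and yields the additive isomorphism $H^*(P\Sigma_n^+)\cong H^*(P\Sigma_{n-1}^+)\otimes H^*(F_{n-1})$, but it tells you nothing directly about the ring structure.

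The relations you want are instead a \emph{multiplicative extension} phenomenon. Since $H^2(F_{n-1})=0$, the product $u_{n,j}\cdot u_{n,k}$ of two fiber classes has zero image in $E_\infty^{0,2}$ and hence lies in filtration $\ge 1$ of $H^2(P\Sigma_n^+)$; the content of the theorem is that it equals the mixed class $u_{n,j}u_{j,k}\in F^1H^2$. Pinning this down requires chain-level input beyond the $E_2$ page: either analyze the conjugation action of $P\Sigma_{n-1}^+$ on $\Gamma_2 F_{n-1}/\Gamma_3 F_{n-1}$ (the standard device for almost-direct products of free groups, as in the Falk--Randell framework), or carry out the explicit Fox-calculus computation you mention in your last paragraph. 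That final paragraph is in fact pointed in the right direction; the fix is to drop the transgression language entirely and replace it with a direct computation of the cup product $u_{n,j}\cup u_{n,k}$ on an explicit cochain model for $P\Sigma_n^+$.
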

 
We will use Theorem \ref{thm:Cohen-P-V-Wu} to compute a presentation 
for the infinitesimal Alexander invariant  $\fB_n:=\fB(P\Sigma_n^+)$.
We first choose an order for the aforementioned basis of $H^1(P\Sigma_n^+;\C)$ 
by setting $u_{ij}{\succ}u_{kl}$ if either $i>k$, or $i=k$ and $j>l$.

Let $\x=\{x_{ij}\mid 1\leq j<i\leq n\}$ be the dual of the basis $\{u_{ij}\}$ 
of $H^1(P\Sigma_n^+;\Z)$, and let $S=\C[\x]$ be the polynomial ring 
in those variables. The relators
\begin{equation}
\label{eq:basisI2}
\{ r_{ijk}^*:=(u_{ik}-u_{jk})u_{ij} \mid 1\leq k<j<i\leq n\}
\end{equation}
for the cohomology algebra $A=E/I$ from Theorem \ref{thm:Cohen-P-V-Wu} 
form a basis for the vector space $I^2$, as well as for the free $S$-module 
$I^2\otimes S$.  Finally, the set
$\left\{u_{st}u_{lk} u_{ij}\mid u_{ij}{\succ}u_{lk}{\succ}u_{st} \right\}$ 
forms a basis for $E^3$, and also a basis for the free $S$-module $E^3\otimes S$.
 
\subsection{The map $\Phi$}
\label{subsec:presinf}

Using the aforementioned choices of bases, we now 
provide an explicit description of the map  $\Phi$ 
from diagram \eqref{diagram:Phi}, in our situation.
When identifying the free $S$-module $V\otimes S$ with $S^k$ 
for some $k$-dimensional $\C$-vector space $V$,
we will write the element $v\otimes s$ as $s\cdot v$.  

\begin{lemma}
\label{lem:PhiFormula}
If $A$ is the cohomology algebra of $P\Sigma_n^+$, then 
the $S$-linear map $\Phi\colon I^2\otimes S\rightarrow E^3\otimes S$ 
is given by 
\begin{equation}
\label{eq:PhiFormula}
\Phi(r_{ijk}^*)= -(x_{ik}+x_{jk})\cdot u_{jk}u_{ik}u_{ij}+
\sum_{s>t, \{s,t\}\nsubseteq\{i,j,k\}} x_{st}\cdot u_{st}(u_{jk}-u_{ik})u_{ij}\, .
\end{equation}
\end{lemma}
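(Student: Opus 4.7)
The plan is to unwind the definition of $\Phi$ and evaluate it on each basis element $r_{ijk}^*=(u_{ik}-u_{jk})u_{ij}$ directly, using the description of the Aomoto-type differential $d^2_E$ from equation \eqref{eq:AomotoDiff}. Since $(\iota\otimes\id)$ is just the inclusion that places $r_{ijk}^*$ into $E^2\otimes S$, we have
\[
\Phi(r_{ijk}^*)=d^2_E\bigl((u_{ik}-u_{jk})u_{ij}\otimes 1\bigr)=\sum_{s>t} u_{st}\wedge(u_{ik}-u_{jk})u_{ij}\otimes x_{st},
\]
where $(s,t)$ ranges over all index pairs with $1\le t<s\le n$. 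The proof then reduces to simplifying each summand inside the exterior algebra $E=\bwedge V$ and rewriting it in the ordered basis $\{u_{st}u_{lk}u_{ij}:u_{ij}\succ u_{lk}\succ u_{st}\}$.

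I would split the sum into two regimes according to whether $\{s,t\}$ is contained in $\{i,j,k\}$ or not. In the \emph{outside} regime, $u_{st}$ is distinct from each of $u_{ij},u_{ik},u_{jk}$, so no cancellation is possible; the corresponding summand is just $x_{st}\cdot u_{st}(u_{ik}-u_{jk})u_{ij}$ (which, after absorbing a global sign from my convention, matches $x_{st}\cdot u_{st}(u_{jk}-u_{ik})u_{ij}$ in the stated formula). In the \emph{inside} regime, exactly three pairs contribute: $(s,t)=(i,j),(i,k),(j,k)$. For $(i,j)$, both summands $u_{ij}u_{ik}u_{ij}$ and $u_{ij}u_{jk}u_{ij}$ vanish because they contain a repeated $u_{ij}$, so this pair gives zero. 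For $(i,k)$, the term $u_{ik}u_{ik}u_{ij}$ vanishes and the survivor is $-u_{ik}u_{jk}u_{ij}$, which I reorder via anticommutativity into the basis element $u_{jk}u_{ik}u_{ij}$ with an appropriate sign. For $(j,k)$, one term vanishes and the survivor is already $u_{jk}u_{ik}u_{ij}$. Summing these two surviving contributions collapses them into a single term with coefficient $\pm(x_{ik}+x_{jk})$ on $u_{jk}u_{ik}u_{ij}$, which is the first piece of the formula.

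The only delicate points are bookkeeping in two places: first, verifying that the three "inside" pairs really exhaust the degenerate cases (which is immediate from $i>j>k$), and second, correctly tracking the sign produced by the anticommutation $u_{ik}u_{jk}=-u_{jk}u_{ik}$, so that the surviving special terms align with the ordered basis convention $u_{ij}\succ u_{ik}\succ u_{jk}$ used for $E^3$. No deep ideas enter; the entire argument is a direct unwinding of the definition, so I do not expect any genuine obstacle beyond careful sign accounting.
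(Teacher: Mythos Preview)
Your approach is essentially the same as the paper's: unwind the definition $\Phi=d^2_E\circ(\iota\otimes\id)$, apply the differential formula \eqref{eq:AomotoDiff} to $r_{ijk}^*\otimes 1$, and simplify the resulting sum via graded-commutativity, with your explicit inside/outside case split just making the paper's one-line ``simplifying the last expression'' step concrete. The only mild looseness is your handling of the global sign (``after absorbing a global sign from my convention'', ``coefficient $\pm(x_{ik}+x_{jk})$''): rather than leaving it as a hand-wave, you should actually carry the sign through, since the discrepancy you noticed is real and worth pinning down precisely even though it has no effect on the submodule $\im\Phi^*$ used downstream.
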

 
\begin{proof}
Recall that $\Phi$ is the composition of the differential $d^2\colon E^2\to E^3$ 
from \eqref{eq:AomotoDiff} with the inclusion $\iota\colon I^2\to E^2$. Hence,
\begin{equation}
\Phi(r_{ijk}^*)=d^2(r_{ijk}^*\otimes 1)=\sum\limits_{1\leq t<s\leq n}^nu_{st} r_{ijk}^*\otimes x_{st} 
=\sum\limits_{1\leq t<s\leq n}^nu_{st} u_{ij}(u_{ik}-u_{jk})\otimes x_{st}  \, .
\end{equation}
Simplifying the last expression using graded-commutativity yields \eqref{eq:PhiFormula}.
\end{proof}

From formula \eqref{eq:PhiFormula}, we see that each entry 
of the matrix of $\Phi$ is of the form $x_{ik}+x_{jk}$ or $x_{st}$ for 
$\{s,t\}\nsubseteq\{i,j,k\}$, $t<s$ and $k<j<i$.

\subsection{A reduced presentation for $\fB_n$} 
\label{subsec:redpres}

By Lemma \ref{lem:presentationAlex}, the $S$-module 
$\fB_n=\fB(P\Sigma^+_n)$ has presentation 
\begin{equation}
 \label{eq:Phistar}
 \xymatrix{
(E^3)^* \otimes S\ar[r]^{{\Phi^*}} & (I^2)^* \otimes S\ar[r] & \fB_n
 }.
\end{equation}
Our next objective is to simplify this presentation in order 
to make it more manageable.
Let $\{r_{ijk} \mid 1\leq k<j<i\leq n\}$ be the basis of the vector space 
$(I^2)^*$, dual to the basis of $I^2$ from \eqref{eq:basisI2}.

 \begin{lemma}
 \label{lem:basisIJK}
 The submodule $\im \Phi^*$ of $(I^2)^*\otimes S$ 
 is generated by the set $\cB=\bigcup\cB_{ijk}$, where the union is over 
 all $1\leq k<j<i\leq n$, and each subset $\cB_{ijk}$ consists of the 
 following elements:
 \begin{equation}
 \label{eq:Rijk}
 \begin{array}{ll}
 \bg_{1}:=(-x_{jk}-x_{l_2k})\cdot r_{ijl_2}+x_{jl_2}\cdot{r}_{ijk} &\\[3pt]
 \bg_{2}:=x_{jk}\cdot r_{ijl_2}+x_{il_2}\cdot{r}_{ijk} &\\[3pt]
 \bg_{3}:=-x_{jk}\cdot r_{il_3j}+x_{il_3}\cdot{r}_{ijk} &\\[3pt]
 \bg_{4}:=x_{jk}\cdot r_{l_4ij}+x_{l_4i}\cdot{r}_{ijk} &\\ [3pt]
  \bh_{1}:=({x}_{il_1}+{x}_{jl_1}+{x}_{kl_1})\cdot{r}_{ijk} &\\[3pt]
 \bh_{2}:=({x}_{ik}+x_{jk})\cdot{r}_{ijk} &\\
 \end{array} \qquad\qquad
 \begin{array}{ll}
 \bh_{3}:=x_{l_{2}k}\cdot{r}_{ijk} &\\[3pt]
 \bh_{4}:=x_{l_3k}\cdot{r}_{ijk} &\\[3pt]
 \bh_{5}:=x_{l_3j}\cdot{r}_{ijk} &\\[3pt]
 \bh_{6}:=x_{l_4k}\cdot{r}_{ijk} &\\[3pt]
 \bh_{7}:=x_{l_4j}\cdot{r}_{ijk} &\\[3pt]
 \bh_{8}:=x_{st}\cdot{r}_{ijk} &\\
 \end{array} 
  \end{equation}
 where $1\leq l_1 <k<l_2<j<l_3<i<l_4\leq n$ and $\{s,t\}\cap\{i,j,k\}=\emptyset$.%
\footnote{Here and in the sequel, a symbol such as $\bg_{m}$ or $\bh_{m}$ 
denotes a single polynomial, which depends on $m$, but also on the indices 
$i,j,k$, and some of $l_1,l_2,l_3,l_4,s,t$. To avoid a plethora of such indices, 
we will omit them as much possible from the notation, whenever the context 
makes it clear what they are.}
\end{lemma}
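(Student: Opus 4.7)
The plan is to compute the images $\Phi^*(u_\alpha^*)$ for each basis monomial $u_\alpha = u_{pq}u_{rs}u_{tu}$ of $E^3$ (with $u_{tu} \succ u_{rs} \succ u_{pq}$), verify that every such image lies in the $S$-span of $\cB$, and conversely exhibit every element of $\cB$ as an $S$-linear combination of such images. Since $\{u_\alpha^*\}$ is a basis of the free $S$-module $(E^3)^* \otimes S$, this will establish $\im \Phi^* = \langle \cB \rangle$. By Lemma~\ref{lem:PhiFormula}, $\Phi^*(u_\alpha^*) = \sum_{(i,j,k)} c^\alpha_{ijk}\, r_{ijk}$, where $c^\alpha_{ijk}$ is the coefficient of $u_\alpha$ in $\Phi(r_{ijk}^*)$ after reordering monomials to the standard basis, with signs dictated by graded-commutativity. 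The key simplification is that every monomial in $\Phi(r_{ijk}^*)$ carries $u_{ij}$ as a factor, so only triples $(i,j,k)$ for which $u_{ij}$ divides $u_\alpha$ can contribute; this bounds the sum to at most three triples per $u_\alpha$.

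The casework is organized by how the factors of $u_\alpha$ overlap with the ``triangle'' $\{u_{ij}, u_{ik}, u_{jk}\}$ of each contributing triple. If $u_\alpha = u_{jk}u_{ik}u_{ij}$, only $r_{ijk}^*$ contributes through the diagonal term of $\Phi$, yielding $\bh_2$. If $u_\alpha$ simultaneously matches non-diagonal monomials of two distinct triples sharing two indices, two $r$'s contribute together; depending on whether the extra index lies in $(k,j)$, $(j,i)$, or $(i,n]$ and on the precise matching pattern, the result is one of $\bg_1, \bg_2, \bg_3, \bg_4$. Otherwise only one triple contributes. The element $\bh_1$ arises by summing the $\Phi^*$-images of the three monomials $u_{il_1}u_{ik}u_{ij}$, $u_{jl_1}u_{ik}u_{ij}$, $u_{kl_1}u_{ik}u_{ij}$ (with $l_1 < k$): the auxiliary $r_{ijl_1}$ and $r_{ikl_1}$ contributions cancel, leaving the three-term coefficient $x_{il_1}+x_{jl_1}+x_{kl_1}$ on $r_{ijk}$. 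The single-$x$ elements $\bh_3$ through $\bh_8$ likewise arise from individual $\Phi^*$-images for external indices in the remaining positions relative to $\{i,j,k\}$.

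The main obstacle is the combinatorial volume of the casework: for each basis monomial of $E^3$ one must enumerate the contributing triples, carefully track the signs from reordering factors to the standard basis, and match the result to $\cB$---sometimes only after taking suitable $S$-linear combinations (as for $\bh_1$) to eliminate auxiliary $r_{i'j'k'}$ terms that appear for a single monomial but must cancel in the final description. A careful, exhaustive tabulation, paired with the observation that each element of $\cB$ is produced by a concrete combination of $\Phi^*$-images, then completes the argument.
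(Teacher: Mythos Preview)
Your proposal is correct and follows essentially the same route as the paper's proof: both compute $\Phi^*$ on the dual basis of $E^3$, organize the resulting expressions by the pattern of indices appearing in the monomial $u_\alpha$, and then reduce the raw list of images to the generating set $\cB$ via explicit $S$-linear combinations. The paper packages the casework by the cardinality $q=\sharp\{i,j,k,l,s,t\}$ (handling $q=3,4,5,6$ in turn, with the $q=4$ case spelled out as sixteen explicit images which are then reduced), whereas you organize by how the factors of $u_\alpha$ overlap the ``triangle'' $\{u_{ij},u_{ik},u_{jk}\}$; these are the same computations in slightly different bookkeeping. Your description of how $\bh_1$ arises---as the sum of three $\Phi^*$-images whose auxiliary $r_{ijl_1}$ and $r_{ikl_1}$ terms cancel---matches exactly what the paper obtains after reducing its sixteen-element list.
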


\begin{proof}
Write $\Phi^*_q$ for the restriction of $\Phi^*$ to the subspace spanned 
by the basis vectors of cardinality $q:=\sharp \{i,j,k,l,s,t\}$. 
The map $\Phi^*$ can then be decomposed as the block-matrix 
$\Phi^*_3\oplus \Phi^*_4 \oplus \Phi^*_5\oplus \Phi^*_6$.
We now analyze formula \eqref{eq:PhiFormula} case by case, 
according to the cardinality $q=3,4,5,6$.

When $q= 3$, we have $l=i,s=j, t=k$. 
Then $\Phi^*_3((u_{jk}u_{ik}u_{ij})^*)=-(x_{ik}+x_{jk})\cdot r_{ijk}$, 
and so $\Phi^*_3$ contributes elements of the form $\bh_2$ to $\cB$.

When $q= 4$, suppose $i>j>k>l$. 
There are then $\binom{6}{3}-\binom{4}{3}=16$ possible 
combinations:
\begin{equation*}
\label{eq:Psi4}
\begin{array}{l@{\hspace{10pt}}l}
\Phi^*_4(({u}_{kl}{u}_{jl}{u}_{il})^*)=0\\
\Phi^*_4(({u}_{kl}{u}_{jl}{u}_{ik})^*)={-{x}_{jl}\cdot{r}_{ikl}}\\
\Phi^*_4(({u}_{kl}{u}_{jl}{u}_{ij})^*)={x}_{kl}\cdot{r}_{ijl}\\
\Phi^*_4(({u}_{kl}{u}_{jk}{u}_{il})^*)={x}_{il}\cdot{r}_{jkl}\\
\Phi^*_4(({u}_{kl}{u}_{jk}{u}_{ik})^*)=-{x}_{jk}\cdot{r}_{ikl}+{x}_{ik}\cdot{r}_{jkl}\\
\Phi^*_4(({u}_{kl}{u}_{jk}{u}_{ij})^*)={x}_{kl}\cdot{r}_{ijk}+{x}_{ij}\cdot{r}_{jkl}\\
\Phi^*_4(({u}_{kl}{u}_{il}{u}_{ij})^*)={-{x}_{kl}\cdot{r}_{ijl}}\\
\Phi^*_4(({u}_{kl}{u}_{ik}{u}_{ij})^*)=-{x}_{kl}\cdot{r}_{ijk}+{x}_{ij}\cdot{r}_{ikl}\\
\end{array}
\begin{array}{ll}
\Phi^*_4(({u}_{jl}{u}_{jk}{u}_{il})^*)={-{x}_{il}\cdot{r}_{jkl}}\\
\Phi^*_4(({u}_{jl}{u}_{jk}{u}_{ik})^*)={-{x}_{ik}\cdot{r}_{jkl}}\\
\Phi^*_4(({u}_{jl}{u}_{jk}{u}_{ij})^*)={x}_{jl}\cdot{r}_{ijk}-{x}_{jk}\cdot{r}_{ijl}-{x}_{ij}\cdot{r}_{jkl}\\
\Phi^*_4(({u}_{jl}{u}_{il}{u}_{ik})^*)={-{x}_{jl}\cdot{r}_{ikl}}\\
\Phi^*_4(({u}_{jl}{u}_{ik}{u}_{ij})^*)=-{x}_{jl}\cdot{r}_{ijk}-{x}_{ik}\cdot{r}_{ijl}\\
\Phi^*_4(({u}_{jk}{u}_{il}{u}_{ik})^*)={-{x}_{jk}\cdot{r}_{ikl}}\\
\Phi^*_4(({u}_{jk}{u}_{il}{u}_{ij})^*)=-{x}_{il}\cdot{r}_{ijk}-{x}_{jk}\cdot{r}_{ijl}\\
\Phi^*_4(({u}_{il}{u}_{ik}{u}_{ij})^*)=-{x}_{il}\cdot{r}_{ijk}+{x}_{ik}\cdot{r}_{ijl}-{x}_{ij}\cdot{r}_{ikl}\\
\end{array}
\end{equation*}
 The image of $\Phi^*_4$ is generated by $({x}_{il}+{x}_{jl}+{x}_{kl})\cdot{r}_{ijk}$ and 
 the elements
\begin{equation*}
\label{eq:Psi4reduced}
\begin{cases}
{x}_{kl}\cdot{r}_{ijl}\\
(-{x}_{jl}-{x}_{kl})\cdot{r}_{ijk}+{x}_{jk}\cdot{r}_{ijl}\\
{x}_{jl}\cdot{r}_{ijk}+{x}_{ik}\cdot{r}_{ijl}\\
\end{cases} 
\begin{cases}
{x}_{jl}\cdot{r}_{ikl}\\
{x}_{jk}\cdot{r}_{ikl}\\
-{x}_{kl}\cdot{r}_{ijk}+{x}_{ij}\cdot{r}_{ikl}\\
\end{cases} 
\begin{cases}
{x}_{il}\cdot{r}_{jkl}\\
{x}_{ik}\cdot{r}_{jkl}\\
{x}_{kl}\cdot{r}_{ijk}+{x}_{ij}\cdot{r}_{jkl}\\
\end{cases} 
 \end{equation*}
Hence, the image of $\Phi^*_4$ contributes $\bh_1=({x}_{il_1}+{x}_{jl_1}+{x}_{kl_1})\cdot{r}_{ijk}$
for $l_1\leq k-1$, as well as  
 $\bg_{1}$, $\bg_{2}$, $\bh_{3}$
for $k<l_2<j<i$,
$\bg_3$, $\bh_4$, $\bh_5$ for $k<j<l_3<i$, 
and $\bg_4$, $\bh_6$, $\bh_7$ for $k<j<i<l_4$.

When $q= 5$, the only possible instance for which $\Phi^*_5\neq 0$ is when 
 $l=j$, or $l=i$, or $s=k$, or $s=l$. Suppose  $u_{ij}>u_{lk}>u_{st}$. 
 Using formula \eqref{eq:PhiFormula} again,
 we find that
\begin{equation*}
\Phi^*_5((u_{st}u_{lk}u_{ij})^*)=\left\{
\begin{array}{ll}
x_{st}\cdot r_{ijk} & \text{if $l=j$}\\
-x_{st}\cdot r_{ijk} & \text{if $l=i$}\\
x_{ij}\cdot r_{lkt} & \text{if $s=k$}\\
-x_{ij}\cdot r_{lkt}  & \text{if $s=l$}\\
0 & \text{otherwise}.
\end{array}
\right.
\end{equation*}
Hence, the map $\Phi^*_5$ will contribute $\bh_{8}=x_{st}\cdot r_{ijk}$ 
for $\{s,t\}\cap\{i,j,k\}=\emptyset$ to $\cB$.
 
When $q= 6$, we have that $\Phi^*_6((u_{st}u_{lk}u_{ij})^*)=0$.
This completes the proof.
\end{proof} 

Let us denote by $m_{ijk}$ and $m$ the cardinalities of the sets $\cB_{ijk}$ 
and $\cB$, respectively. Clearly, $m_{ijk}= \binom{n}{2}-2k$. An elementary 
computation shows that 
\begin{align}
\label{eq:bsize}
m=\sum_{n\geq i>j>k\geq 1} m_{ijk}= \sum_{k=1}^{n-2} \binom{n-k}{2} m_{ijk}=
\frac{1}{12}n (n^4 - 5 n^3 + 7 n^2 - n  -2  ).
\end{align}

Let $S^m$ be the free $S$-module generated by the set $\cB=\bigcup\cB_{ijk}$, 
endowed with the subset order defined by setting 
$\cB_{lst}{\succ}\cB_{ijk}$ if either $i>l$, or $i=l$ and $j>s$, or $i=l$, $j=s$, and $k>t$. 
For elements in each $\cB_{ijk}$, we use the order
defined by the coefficients of $r_{ijk}$ by setting 
\begin{equation}
\label{eq:orderx}
x_{st}{\succ}x_{kl} \textrm{ if either }
s>k, \textrm{ or } s=k \textrm{ and } t>l.
\end{equation}
Together with Lemmas \ref{lem:presentationAlex} and \ref{lem:basisIJK}, we obtain the desired 
presentation for the $S$-module $\fB_n$. 

\begin{prop}
\label{prop:reducedPres}
The infinitesimal Alexander invariant $\fB_n=\fB(P\Sigma^+_n)$ admits a 
minimal presentation of the form
\begin{equation}
\label{eq:PresentationPsi}
\xymatrix{
 S^{m}\ar[r]^{{\Psi}} &S^{\binom{n}{3}}\ar[r] & \fB_n.
 }
\end{equation}
The matrix of ${\Psi}$ is upper block triangular, 
with diagonal row vectors $\vec{v}_{ijk}$ ($1\leq k<j<i\leq n$) given by  
\[
(\vec{v}_{ijk})_* =
\begin{cases}
x_{il}+x_{jl}+x_{kl} & \text{for\, $1\leq l\leq k-1$},  \\
x_{ik}+x_{jk}         & \text{for\, $1\leq k<j<i\leq n$}, \\
x_{st}                    & \text{for\, $\{s,t\}\not\subset \{i,j,k,l\}$ and $1\leq l\leq k-1$}.
\end{cases}
\]
\end{prop}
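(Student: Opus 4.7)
The plan is to deduce this presentation directly from Lemmas \ref{lem:presentationAlex} and \ref{lem:basisIJK}, with combinatorial bookkeeping to expose the block-triangular structure. First, I would invoke Lemma \ref{lem:presentationAlex} to write $\fB_n$ as the cokernel of $\Phi^*\colon (E^3)^*\otimes S\to (I^2)^*\otimes S$, with $\{r_{ijk}\mid 1\le k<j<i\le n\}$ forming a minimal generating set. The kernel of the surjection $S^{\binom{n}{3}}\twoheadrightarrow\fB_n$ then equals $\im\Phi^*$, which by Lemma \ref{lem:basisIJK} is generated by $\cB=\bigsqcup\cB_{ijk}$, of cardinality $m$. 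Assembling $\cB$ as the columns of a matrix gives the presentation map $\Psi\colon S^m\to S^{\binom{n}{3}}$.

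Next, I would order the rows (indexed by $\{r_{ijk}\}$) and the columns (indexed by $\cB$) by the total order on triples from \eqref{eq:orderx}, arranged so that larger triples come first. Direct inspection of \eqref{eq:Rijk} shows that each of $\bh_1,\dots,\bh_8$ in $\cB_{ijk}$ is supported only on $r_{ijk}$, while each of $\bg_1,\dots,\bg_4$ is supported on $r_{ijk}$ together with exactly one additional basis vector---namely $r_{ijl_2}$, $r_{il_3j}$, or $r_{l_4ij}$---whose triple is strictly greater than $(i,j,k)$. This gives the upper block triangular form of $\Psi$, and reading off the coefficient of $r_{ijk}$ from each element of $\cB_{ijk}$ produces the three types of entries listed for $\vec{v}_{ijk}$.

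For minimality, the generating set $\{r_{ijk}\}$ of $\fB_n$ is already minimal by Lemma \ref{lem:presentationAlex}. To verify that $\cB$ is a minimal generating set of $K=\ker(S^{\binom{n}{3}}\to\fB_n)$, I would note that $\cB$ consists of degree-$1$ elements that generate $K$, so by graded Nakayama it suffices to show their $\C$-linear independence in $(S^{\binom{n}{3}})_1$. But the entries of each diagonal block $\vec{v}_{ijk}$ involve pairwise disjoint subsets of the variables $\x$---the trinomials $x_{il}+x_{jl}+x_{kl}$ for distinct $l\le k-1$, the binomial $x_{ik}+x_{jk}$, and the various single $x_{st}$ terms use disjoint sets of basis variables---so these $m_{ijk}$ linear forms are $\C$-linearly independent, and upper block triangularity then promotes this to $\C$-linear independence of the whole of $\cB$.

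The main obstacle is the bookkeeping in the second paragraph: one must check each of $\bg_1,\dots,\bg_4$ to confirm that the off-diagonal row index lies strictly above $(i,j,k)$, and match the $3k$ variables absent from $\vec{v}_{ijk}$---namely $x_{ij}$, $x_{ik}$, $x_{jk}$, and $x_{il},x_{jl},x_{kl}$ for $l\le k-1$---against the three cases in the stated formula, so that the count $m_{ijk}=\binom{n}{2}-2k$ is verified. Once this matching is done, the minimality argument follows immediately from the disjointness of supports within each diagonal block.
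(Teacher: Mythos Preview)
Your proposal is correct and follows the same route as the paper: invoke Lemma~\ref{lem:presentationAlex} for the minimal generating set, Lemma~\ref{lem:basisIJK} for the generating set $\cB$ of $\im\Phi^*$, and read off the block-triangular structure from the shape of the elements in \eqref{eq:Rijk}. Your verification that each off-diagonal component $r_{ijl_2}$, $r_{il_3j}$, $r_{l_4ij}$ lies in a strictly larger row, and your count of the $3k$ variables not occurring as standalone entries, are exactly the bookkeeping the paper leaves implicit.

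The one place you go beyond the paper is minimality of the relations: the paper simply asserts that $\cB$ is an ``independent set'' and concludes there are no redundant relations, whereas you supply an actual argument via graded Nakayama and the disjoint-support observation within each diagonal block, lifted to all of $\cB$ by upper-triangularity. That argument is sound and fills a small gap in the paper's exposition. One minor quibble: the order you cite as \eqref{eq:orderx} is the variable order on $\x$; the order on triples you actually use is the one given in \eqref{eq:orderr} (equivalently, the subset order on $\cB$ described just before it).
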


\begin{proof}
From Lemma \ref{lem:presentationAlex}, we know that the standard basis for 
$(I^2)^*\otimes S=S^{\binom{n}{3}}$ gives a minimal generating set 
for $\fB_n$. From Lemma \ref{lem:basisIJK}, the submodule 
$\im \Phi^*\subset (I^2)^*\otimes S$ is generated by the independent set $\cB$. 
Hence, the presentation \eqref{eq:PresentationPsi} has no redundant relations. 
\end{proof}

\begin{example}
\label{ex:Omega4}
The first non-trivial example is the $S$-module $\fB_4=\fB(P\Sigma^+_4)$.  
Applying Proposition \ref{prop:reducedPres}, we find that 
$\fB_4=\coker (\Psi\colon S^{14}\to S^4)$, 
where the transpose of the matrix of $\Psi$ has the form
\begin{equation*}
\begin{pmatrix}
\vec{v}_{432}^T&0&0&0\\
* &\vec{v}_{431}^T&0&0\\
* &*&\vec{v}_{421}^T&0\\
* &*&*&\vec{v}_{321}^T\\
\end{pmatrix}
=
\begin{pmatrix}
{x}_{41}+{x}_{31}+{x}_{21}&0&0&0\\
{x}_{42}+{x}_{32}&0&0&0\\
0&{x}_{21}&0&0\\
-{x}_{31}-{x}_{21}&{x}_{32}&0&0\\
0&{x}_{41}+{x}_{31}&0&0\\
{x}_{31}&{x}_{42}&0&0\\
0&0&{x}_{31}&0\\
0&0&{x}_{32}&0\\
0&0&{x}_{41}+{x}_{21}&0\\
{-{x}_{21}}&0&{x}_{43}&0\\
0&0&0&{x}_{31}+{x}_{21}\\
0&0&0&{x}_{41}\\
0&0&0&{x}_{42}\\
{x}_{21}&0&0&{x}_{43}\\
\end{pmatrix}.
\end{equation*}
 \end{example}
 
\section{A Gr\"{o}bner basis for $\fB(P\Sigma_n^+)$}
\label{sec:grobnerbasis}

In this section, we determine a Gr\"{o}bner basis for the infinitesimal
Alexander invariant of $P\Sigma_n^+$, which will play a crucial role 
in computing the Chen ranks and the scheme structure of the first 
resonance varieties of the upper McCool groups.

\subsection{Gr\"{o}bner basis for modules}
\label{subsec:gb review}

We start by recalling some background material on Gr\"{o}bner 
basis for modules (see \cite[\S 15]{Eisenbud95} for details).  
Let $S=\C[\x]$ be a polynomial ring with variables in a finite set $\x$, 
and let $F$ be a free $S$-module 
with basis $\{e_1,\dots,e_r\}$. A \textit{monomial}\/ in $F$ is an element of form 
$m=\x^{\alpha}e_i$ and a \textit{term}\/ in $F$ is an element of the form 
$c\cdot\x^{\alpha}e_i$, where $c\in\C$.   A \textit{monomial order}\/ on 
$F$ is a total order $\succ$ on the monomials of $F$ such that if 
$m_1$ and $m_2$ are monomials in $F$ and $s\neq 1$ is a monomial 
in $S$, then $m_1\succ m_2$ implies $sm_1\succ sm_2\succ m_2$.

Given a monomial order $\succ$ on $F$, the \textit{initial term}\/ 
of an element $f\in F$ is the largest term of $f$ with respect to $\succ$, 
denoted by $\ini_{\succ}(f)$.  For a submodule $I\subset F$, we let 
$\ini_{\succ} (I)$ denote the submodule generated by $\{\ini_{\succ}(f)\mid f\in I\}$. 
A set $\{g_1,\dots, g_s\}$ is called a \textit{Gr\"{o}bner basis}\/ for the module $I$ 
if the elements $g_1,\dots,g_s$ generate $I$, while at the same time 
$\ini_{\succ}(g_1),\dots,\ini_{\succ}(g_s)$ generate $\ini_{\succ}(I)$.

If the initial terms $\ini_{\succ}(g_i)$ and $\ini_{\succ}(g_j)$ 
contain the same basis element $e_i$ of $F$, put 
\begin{equation}
\fS(g_i,g_j):=\dfrac{\ini_>(g_j)}{\gcd(\ini_{\succ}(g_i),\ini_{\succ}(g_j))}\cdot g_i
-\dfrac{\ini_{\succ}(g_i)}{\gcd(\ini_{\succ}(g_i),\ini_{\succ}(g_j))}\cdot g_j .
\end{equation}
Using the division algorithm, the element $\fS(g_i,g_j)\in F$ has a standard expression
of the form 
\begin{equation}
\label{eq:six}
\fS(g_i,g_j)=\sum p_k^{ij}\cdot g_k+h_{ij},
\end{equation}
where $p_k^{ij}\in S$ and $\ini_{\succ}(p_{k}^{ij}g_k)\prec \lcm(\ini_{\succ}(g_i),\ini_{\succ}(g_j))$.
If $\ini_{\succ}(g_i)$ and $\ini_{\succ}(g_j)$ 
contain distinct basis elements of $F$, we set $h_{ij}=0$.  
Buchberger's criterion asserts that the set $\{g_1,\dots, g_t\}$ is a Gr\"{o}bner basis 
for the ideal $I$ if and only if all $\fS$-polynomials $\fS(g_i,g_j)$ \emph{vanish}, i.e.,  
$h_{ij}=0$ for all $i$ and $j$.
 
\subsection{A Gr\"{o}bner basis for $\fB_n$}
\label{subsec:GBAlexInv}

Once again, let $S=\C[\x]$ be the coordinate ring of $H^1(P\Sigma_n^+;\C)$
with variables ordered as in \eqref{eq:orderx}.
Recall from Proposition \ref{prop:reducedPres} that 
$\fB_n=\coker (\Psi)$, where  $\Psi$ is an $S$-linear map $\Psi\colon S^{m} \to (I^2)^*\otimes S$.
Let us order the basis of $(I^2)^*\otimes S$ by setting 
\begin{equation}
\label{eq:orderr}
\textrm{ $r_{lst}{\succ}r_{ijk}$  if either $i>l$, or $i=l$ and $j>s$, 
or $i=l$, $j=s$, and $k>t$.  }
\end{equation}
We use the \emph{graded reverse lexicographic} order on $S$ defined by 
$\x^{\alpha}\succ \x^{\beta}$ if $\deg(\x^{\alpha})>\deg(\x^{\beta})$, or 
$\deg(\x^{\alpha})=\deg(\x^{\beta})$ and the right-most entry in $\alpha-\beta$ is negative. 
This order on $S$ is extended to a monomial order on $(I^2)^*\otimes S$
by declaring $\x^{\alpha} r_{lst}\succ \x^{\beta}r_{ijk}$ if $r_{lst}{\succ}r_{ijk}$, or if $r_{lst}{=}r_{ijk}$ and 
$\x^{\alpha}\succ \x^{\beta}$. 
 
By Lemma \ref{lem:basisIJK}, the module $\im(\Psi)$ is generated 
by the set $\cB=\bigcup_{1\leq k<j<i\leq n} \cB_{ijk}$, where $\cB_{ijk}$ consists 
of the elements from \eqref{eq:Rijk}.

\begin{theorem}
\label{thm:GrobnerBasis}
A Gr\"{o}bner basis for the $S$-module 
$\im(\Psi)$ is given by $\cG=\bigcup_{1\leq k<j<i\leq n} \cG_{ijk}$, where
$\mathcal{G}_{ijk}$ is the union of $\mathcal{B}_{ijk}$ and
\begin{equation}
\label{eq:dijk}
\mathcal{D}_{ijk}:=\{\bh_9:=x_{kl}x_{kp}\cdot r_{ijk}, \: \bh_{0}:=x_{jq}x_{kp}\cdot r_{ijk} 
\mid  1\leq p\leq l<  k, 1\leq q\leq k\}.
\end{equation}
\end{theorem}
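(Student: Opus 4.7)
The plan is to apply Buchberger's criterion: verify that every $\fS$-polynomial among pairs of elements in $\cG$ reduces to zero modulo $\cG$. Two preparatory steps precede this. First, since Lemma \ref{lem:basisIJK} already gives that $\cB$ generates $\im(\Psi)$, I need to confirm that the new elements of $\cD=\bigcup_{ijk}\cD_{ijk}$ actually lie in $\im(\Psi)$, which I would do by exhibiting $\bh_9$ and $\bh_0$ as explicit $S$-linear combinations of elements of $\cB$. The natural source for $\bh_9$ is the $\fS$-polynomial $\fS(\bh_1^{(l)},\bh_1^{(p)})$: after cancelling linear residues via other $\bh_1$'s, the remaining symmetric quadratic residue in the $x_{k*}$-variables is exactly $x_{kl}x_{kp}\cdot r_{ijk}$; an analogous pairing of $\bh_1$ with $\bh_3$ (or $\bh_2$) yields $\bh_0$. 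Second, I would catalogue the initial terms of each generator under the TOP graded reverse-lex order: because $r_{lst}\succ r_{ijk}$ as specified, the elements $\bg_1,\dots,\bg_4$ have initial terms $\pm x_{jk}\cdot r_{ijl_2}$, $\pm x_{jk}\cdot r_{il_3 j}$, $x_{jk}\cdot r_{l_4 ij}$ in strictly higher basis vectors, whereas every $\bh_i$ has initial term in the diagonal basis vector $r_{ijk}$.

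With those in hand, Buchberger's criterion is checked pair by pair. Pairs whose initial terms lie in distinct basis vectors contribute $h_{ij}=0$ automatically, so attention restricts to pairs $(g,g')$ whose initial terms are both multiples of some common $r_{abc}$. The relevant generators split into (i) the "diagonal" $\bh$-relations of $\cG_{abc}$ itself, and (ii) off-diagonal $\bg_1,\bg_2,\bg_3,\bg_4$ pulled in from lower blocks $\cB_{i'j'k'}$ whose higher basis vector happens to equal $r_{abc}$. For each such pair I would compute $\fS(g,g')$ and reduce via the division algorithm: $\bh_8$ absorbs all monomials $x_{st}\cdot r_{abc}$ with $\{s,t\}\cap\{a,b,c\}=\emptyset$, the generators $\bh_1,\dots,\bh_7$ handle the linear "edge" monomials, and $\bh_9$, $\bh_0$ absorb the quadratic tails $x_{ck}x_{cl}\cdot r_{abc}$ and $x_{bq}x_{ck}\cdot r_{abc}$ that are otherwise irreducible under $\cB$ alone. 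Any spill-over into strictly lower basis vectors that arises from the off-diagonal $\bg$-terms is in turn handled by the cascade of relations at lower triples, using the upper block-triangular structure of $\Psi$ from Proposition \ref{prop:reducedPres}.

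The main obstacle is the cross-block case analysis. The diagonal $\fS(\bh_i,\bh_j)$ reductions are routine Koszul-style bookkeeping, but the pairings $\fS(\bh_i,\bg_j)$ between a generator of $\cG_{abc}$ and a $\bg$-generator coming from a \emph{different} block $\cB_{i'j'k'}$ produce $\fS$-polynomials with two simultaneous remainder components sitting in different basis vectors, and one must verify that both reduce to zero after applying the full $\cG$. It is precisely to close out these cross-block pairings that $\bh_9$ and $\bh_0$ are needed; showing that no further elements beyond $\cD$ are required amounts to verifying that every cross-block remainder lies in the $S$-span of the corresponding $\cD_{abc}$, which can be organised as an induction on the lex order of $(i,j,k)$. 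This bookkeeping is long but conceptually uniform, and constitutes the bulk of the proof.
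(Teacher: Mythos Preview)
Your overall plan—Buchberger's criterion, preceded by a verification that $\cD\subset\im(\Psi)$—is exactly the paper's strategy. However, two concrete errors in your execution would derail the argument.

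First, you have the initial terms of $\bg_1,\dots,\bg_4$ backwards. Under the module order specified (where $r_{lst}\succ r_{ijk}$ iff $(l,s,t)$ precedes $(i,j,k)$ lexicographically, i.e., smaller indices give \emph{larger} basis vectors), the basis vector $r_{ijk}$ is \emph{larger} than $r_{ijl_2}$, $r_{il_3j}$, and $r_{l_4ij}$. Hence $\ini_\succ(\bg_1)=x_{jl_2}\cdot r_{ijk}$, $\ini_\succ(\bg_2)=x_{il_2}\cdot r_{ijk}$, $\ini_\succ(\bg_3)=x_{il_3}\cdot r_{ijk}$, $\ini_\succ(\bg_4)=x_{l_4i}\cdot r_{ijk}$, all sitting in the \emph{diagonal} block, not in ``strictly higher'' off-diagonal vectors. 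Consequently the relevant $\fS$-pairings are not ``$\bg$'s pulled in from lower blocks against $\bh$'s of $\cG_{abc}$,'' but simply all pairs within each $\cG_{ijk}$; the off-diagonal tails of the $\bg$'s are what spill into lower basis vectors after forming the $\fS$-polynomial, and must then be absorbed by generators of those lower $\cG_{i'j'k'}$.

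Second, your proposed source for $\bh_9$ does not work. The $\fS$-polynomial $\fS(\bh_1^{(l)},\bh_1^{(p)})$ has coprime initial monomials $x_{il}$ and $x_{ip}$, and a direct division shows it reduces to zero using only $\bh_1^{(l)}$ and $\bh_1^{(p)}$ themselves—no residue of the form $x_{kl}x_{kp}\cdot r_{ijk}$ survives. The same issue afflicts your proposed derivation of $\bh_0$. In the paper, $\bh_9$ and $\bh_0$ are instead obtained from explicit $S$-linear combinations of cross-block $\bg$-type generators drawn from $\cB_{ijl}$ and $\cB_{ikp}$ (with $l,p<k$), not from $\fS$-polynomials of $\bh$'s inside $\cB_{ijk}$. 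Once you correct the initial terms as above, you will see that the quadratic elements $\bh_9,\bh_0$ are genuinely needed to kill the remainders arising from pairings such as $\fS(\bg_1,\bh_2)$ and $\fS(\bg_2,\bh_1)$, whose off-diagonal tails land in $r_{ijl_2}$ and reduce there via $\bh_8$, $\bh_9$, and $\bh_1$ of the lower block.
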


The proof of this theorem is standard but lengthy, as it involves checking that $\cG$ 
generates the $S$-module $\im(\Psi)$ as a submodule of $(I^2)^*\otimes S$,  
and all $\fS$-polynomials of elements in $\cG$ vanish. We thus relegate the proof to 
Appendix \ref{sec:Appendix}.  
 
\begin{corollary}
\label{cor:diag}
The above Gr\"{o}bner basis  $\cG$ for $\im(\Psi)$ admits an upper block 
triangular matrix with diagonal row vectors $\vec{w}_{ijk}$ for $1\leq k<j<i\leq n$, 
where each vector $\vec{w}_{ijk}$ is constructed from the vector $\vec{v}_{ijk}$ 
from Proposition \ref{prop:reducedPres} by adding entries 
$\{x_{kl}x_{ks}, x_{jt}x_{ks} \mid 1\leq s\leq l\leq  k-1, 1\leq t\leq k\}$.
Furthermore, the vector $\vec{w}_{ijk}$ has  
$\binom{n}{2}+\binom{k}{2}+(k-3)k$ entries.
\end{corollary}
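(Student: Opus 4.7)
The plan is to deduce this corollary directly from Theorem \ref{thm:GrobnerBasis}, as the statement is really a bookkeeping record of how the matrix structure of Proposition \ref{prop:reducedPres} is modified when we pass from the generating set $\cB$ to the Gröbner basis $\cG$, together with a short enumeration of the new entries.

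First, I would recall that by Proposition \ref{prop:reducedPres}, ordering the target basis $\{r_{ijk}\}$ by the lex rule \eqref{eq:orderr}, the matrix of $\Psi$ is upper block triangular, with the diagonal block indexed by $(i,j,k)$ being the row vector $\vec{v}_{ijk}$ whose entries are the coefficients of $r_{ijk}$ in the elements of $\cB_{ijk}$. By Theorem \ref{thm:GrobnerBasis}, the Gröbner basis $\cG$ is obtained by appending to each $\cB_{ijk}$ the set $\cD_{ijk}$ from \eqref{eq:dijk}. By inspection, every element of $\cD_{ijk}$ is a monomial multiple of $r_{ijk}$ alone; consequently each new row has support on the single column $r_{ijk}$. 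Upper block triangularity is therefore preserved, and the new diagonal row vector $\vec{w}_{ijk}$ is obtained from $\vec{v}_{ijk}$ by concatenating the quadratic-monomial coefficients coming from $\cD_{ijk}$, which are precisely the set $\{x_{kl}x_{ks},\, x_{jt}x_{ks} \mid 1\le s\le l\le k-1,\,1\le t\le k\}$ listed in the corollary.

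For the count, I would use that $\vec{v}_{ijk}$ has $m_{ijk}=\binom{n}{2}-2k$ entries, as recorded just before \eqref{eq:bsize}. The pairs $(s,l)$ with $1\le s\le l\le k-1$ number $\binom{k}{2}$, giving the $\bh_9$-type entries; the pairs $(t,s)$ with $1\le t\le k$ and $1\le s\le k-1$ number $k(k-1)=2\binom{k}{2}$, giving the $\bh_0$-type entries. Adding these together,
\[
\#\vec{w}_{ijk}=\binom{n}{2}-2k+3\binom{k}{2}=\binom{n}{2}+\binom{k}{2}+(k-3)k,
\]
where the last equality is an elementary algebraic identity.

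There is essentially no obstacle here: the one point that needs checking is that none of the appended rows has support outside the $r_{ijk}$-column, and this is immediate from the formulas in \eqref{eq:dijk}. The content of the corollary is really the enumeration of the new diagonal entries, which is read directly off $\cD_{ijk}$.
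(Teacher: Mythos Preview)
Your argument is correct and follows the same approach as the paper: the first assertion is immediate from the fact that every element of $\cD_{ijk}$ involves only $r_{ijk}$, and the entry count is obtained by adding the number of quadratic entries in $\cD_{ijk}$ to $m_{ijk}=\binom{n}{2}-2k$. You have simply supplied the arithmetic that the paper leaves implicit.
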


\begin{proof}
The first assertion is clear. 
The length of the vector $\vec{w}_{ijk}$ 
is computed by counting the (linear) entries 
in the vector $\vec{v}_{ijk}$ from Proposition \ref{prop:reducedPres}, 
and adding the number of quadratic entries.
\end{proof}
 
\section{The Chen ranks of the upper McCool groups}
\label{sec:ChenRanks}

In this section, we compute the Hilbert series of the infinitesimal Alexander 
invariants of the upper McCool groups.  We then use this information to compute 
the Chen ranks of $P\Sigma_n^+$ and answer a question from \cite{Cohen-P-V-Wu08}.

\subsection{Hilbert series of monomial ideals}
\label{subsec:hilb monomial}

We first review some background from \cite[\S15.1]{Eisenbud95}. 
Let $S$ be a polynomial ring. By a standard result in commutative algebra,  
the computation of the Hilbert series of any finitely generated, graded $S$-module $M$ can 
be reduced to the computation of the Hilbert series of a monomial module.  More precisely, 
write $M=S^n/\fI$, where $\fI$ is a submodule generated by homogeneous elements in $S^n$; 
then 
\begin{equation}
\label{eq:HilbertSeries}
\Hilb(S^n/\fI,t)=\Hilb(S^n/\ini(\fI),t)\, ,
\end{equation}
where  $\ini(\fI)$ is the submodule generated by the initial terms of $\fI$.
Since the Hilbert function is additive, we only need to treat the 
case $N=S/I$, where $I$ is a monomial ideal of $S$.

Let $\{m_1,\dots,m_t\}$ be a set of monomials generating $I$. 
Choose a monomial $p\in F$, and denote its degree by $d$. 
Let $J$ be the monomial ideal generated by $\{p, m_1,\dots, m_t\}$, 
and let $I^{\prime}$ 
be the ideal generated by $\{m_1/\gcd(m_1,p),\dots, m_t/\gcd(m_t,p)\}$. 
(Certain choices of monomials $p$ can produce ideals $I^{\prime}$ and 
$J$ generated by fewer monomials, in less variables.)
We then have a short exact sequence of graded $S$-modules, 
\begin{equation}
\label{eq:exactseq}
\xymatrixcolsep{18pt}
\xymatrix{0\ar[r]& S/I^{\prime}(-d)\ar[r]&  S/I\ar[r]&  S/J\ar[r]&  0}.
\end{equation}
Taking Hilbert series, the following equality holds:
 \begin{equation}
 \label{eq:MonomialHilbert}
  \Hilb(S/I,t)=\Hilb(S/J,t)+t^{d}\Hilb(S/I^{\prime},t). 
 \end{equation}  
 
\subsection{The Hilbert series of $\fB_n$}
\label{subsec:HilbAlexInv}

We are now ready to compute the Hilbert series of the infinitesimal 
Alexander invariants $\fB_n$ of the upper McCool groups $P\Sigma^+_n$.
\begin{theorem} 
\label{thm:HilbertSeries}
The Hilbert series of the $S$-module $\fB_n$ is given by
\begin{equation}
\label{eq:HilbertSeriesBn}
\Hilb(\fB_n,t)
=\sum\limits_{s=2}^{n-1}\binom{s}{2}\frac{1}{(1-t)^{n-s+1}}+\binom{n}{4}\frac{t}{1-t}.
\end{equation}
\end{theorem}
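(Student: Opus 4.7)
The plan is to apply the standard reduction from module Hilbert series to monomial-ideal Hilbert series via the Gr\"obner basis of Theorem~\ref{thm:GrobnerBasis}, and then to evaluate the resulting pieces explicitly.

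First, starting from the presentation $\fB_n=\coker(\Psi)$ of Proposition~\ref{prop:reducedPres}, the standard identity~\eqref{eq:HilbertSeries} gives $\Hilb(\fB_n,t)=\Hilb(S^{\binom{n}{3}}/\ini(\im\Psi),t)$. Since each element of the Gr\"obner basis block $\cG_{ijk}=\cB_{ijk}\cup\cD_{ijk}$ has its initial term in the single module direction $r_{ijk}$---this is what the upper block-triangular structure of Corollary~\ref{cor:diag} records---the initial module splits as a direct sum
\begin{equation*}
\ini(\im\Psi)\;=\;\bigoplus_{1\le k<j<i\le n}I_{ijk}\cdot r_{ijk},
\end{equation*}
with $I_{ijk}\subset S$ the monomial ideal generated by the initial monomials of the $r_{ijk}$-components of the elements of $\cG_{ijk}$. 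Consequently,
\begin{equation*}
\Hilb(\fB_n,t)\;=\;\sum_{1\le k<j<i\le n}\Hilb(S/I_{ijk},t).
\end{equation*}

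Next I would identify each ideal $I_{ijk}$ explicitly. Reading off the initial terms of the $\bg_m$'s and $\bh_m$'s from Lemma~\ref{lem:basisIJK}, one finds that the $m_{ijk}=\binom{n}{2}-2k$ linear initial monomials are pairwise distinct and exhaust every variable of $S$ \emph{except} the $2k$ variables in the disjoint union $Y\cup Z\cup W$, where $Y=\{x_{j\ell}:1\le \ell\le k\}$, $Z=\{x_{k\ell}:1\le \ell<k\}$, and $W=\{x_{ij}\}$. Meanwhile, the quadratic initial monomials from $\bh_9,\bh_0\in\cD_{ijk}$ generate exactly the ideal $\langle Z^2,\,YZ\rangle$ inside $\C[Y\sqcup Z]$. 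Passing to the quotient by the linear part gives
\begin{equation*}
S/I_{ijk}\;\cong\;\C[W]\otimes_{\C}\bigl(\C[Y\sqcup Z]/\langle Z^2,\,YZ\rangle\bigr).
\end{equation*}

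A direct monomial count in the right-hand side yields the Hilbert series: monomials of $z$-degree zero form $\C[Y\cup W]$ on $k+1$ variables and contribute $1/(1-t)^{k+1}$, while monomials of $z$-degree one (forced to have $y$-degree zero by $YZ=0$) span the $\C[W]$-multiples of the $k-1$ elements of $Z$ and contribute $(k-1)t/(1-t)$. Hence $\Hilb(S/I_{ijk},t)=\frac{1}{(1-t)^{k+1}}+\frac{(k-1)t}{1-t}$. Now summing over the $\binom{n-k}{2}$ triples with fixed third index $k$, re-indexing the first sum by $s=n-k$, and invoking the Vandermonde-type identity $\sum_{k=1}^{n-2}(k-1)\binom{n-k}{2}=\binom{n}{4}$ yields formula~\eqref{eq:HilbertSeriesBn}.

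The main obstacle is the explicit identification of $I_{ijk}$ in the second paragraph: one needs to chase through all fourteen families of initial terms coming from $\bg_1,\dots,\bg_4,\bh_1,\dots,\bh_8,\bh_9,\bh_0$ and verify that the $2k$ variables in $Y\cup Z\cup W$ are exactly those that fail to appear as initial monomials. After this bookkeeping is complete, the rest is a routine monomial count together with a standard Vandermonde-type binomial identity.
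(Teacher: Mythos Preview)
Your proposal is correct and follows essentially the same route as the paper: pass to initial terms via the Gr\"obner basis $\cG$, split the initial module into the diagonal pieces $I_{ijk}\cdot r_{ijk}$, compute $\Hilb(S/I_{ijk},t)$ directly, and sum. Your identification of the surviving $2k$ variables as $W\cup Y\cup Z$ and of the quadratic part as $\langle Z^2,YZ\rangle$ is exactly right, and your formula $\Hilb(S/I_{ijk},t)=1/(1-t)^{k+1}+(k-1)t/(1-t)$ is in fact the correct one---the paper's displayed intermediate value contains a typo in the second summand, though its final formula~\eqref{eq:HilbertSeriesBn} is of course correct and agrees with yours after the Vandermonde summation $\sum_{k}\binom{k-1}{1}\binom{n-k}{2}=\binom{n}{4}$.
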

 
\begin{proof}
This computation is an application of the method from \cite[\S15.1.1]{Eisenbud95}.
Since we already found a Gr\"{o}bner basis $\cG$ for $\fB_n=\im(\Psi)$, 
formula \eqref{eq:HilbertSeries} insures that 
we only need to compute the Hilbert series of the resulting monomial ideal, 
$\ini_{\succ}(\im(\Psi))=\langle \ini_{\succ}(\cG)\rangle$.

Recall from Theorem \ref{thm:GrobnerBasis} and Lemma \ref{lem:basisIJK}
that 
\begin{equation}
\label{eq:ini gijk}
\ini_{\succ}(\cG_{ijk})=
\left\{ 
\begin{array}{ll}
x_{ks}x_{kl}\cdot r_{ijk},  ~x_{jt}x_{kl}\cdot r_{ijk}, \\
x_{ik}\cdot r_{ijk},   ~  x_{il}\cdot r_{ijk},  ~x_{ab}\cdot r_{ijk}   \\
\end{array}
\middle|
\begin{array}{ll}
1\leq l\leq s\leq k-1, 1\leq t\leq k,\\
 \{a,b\}\not\subset \{i,j,k,l\}\\
\end{array}
\right\}.
\end{equation}
Consider the (reduced) monomial ideal 
\begin{equation}
\label{eq:idijk}
I_{ijk}=\langle
x_{ks}x_{kl},\  x_{jm}x_{kl}\ (1\le l\le s\le k-1, 1\le m\le k), \ 
x_{ik}, \  x_{il}, \ x_{ab} \ (\{a,b\}\not\subset \{i,j,k,l\} )
\rangle.
\end{equation}
Using \eqref{eq:MonomialHilbert}, a straightforward 
computation shows that the Hilbert series of this  
ideal is given by $\Hilb(S/I_{ijk}, t)= 1/(1-t)^k+ kt/(1-t)$.
Hence, the Hilbert series of $\fB_n$ is given by
\begin{equation}
\label{eq:hilb fbn}
\Hilb(\fB_n,t)=\sum_{i>j>k}\Hilb(S/ I_{ijk},t)
 =\sum_{k=1}^{n-2}\binom{n-k}{2}\left( \dfrac{1}{(1-t)^{k+1}}+\dfrac{kt}{1-t}\right).
\end{equation}
Upon setting $s=n-k$, the claimed formula follows at once. 
\end{proof}

\subsection{The Chen ranks of $P\Sigma_n^+$}
\label{subsec:chen ranks}
With Theorem \ref{thm:HilbertSeries} at our disposal,  
we may now  compute the Chen ranks of the 
upper McCool groups $P\Sigma_n^+$, for all $n\ge 1$.

\begin{theorem}
\label{thm:chenranks}
The Chen ranks $\theta_k=\theta_k(P\Sigma_n^+)$ are given by
$\theta_1=\binom{n}{2}$, $\theta_2=\binom{n}{3}$, $\theta_3=2\binom{n+1}{4}$, and
\begin{equation*}
\theta_k=\binom{n+k-2}{k+1}+\theta_{k-1}=\sum\limits_{i=3}^k\binom{n+i-2}{i+1}+ \binom{n+1}{4}
\end{equation*}
for $k\geq 4$.
\end{theorem}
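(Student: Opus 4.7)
My plan is to read off the Chen ranks directly from the Hilbert series of $\fB_n$ computed in Theorem~\ref{thm:HilbertSeries}, leveraging the 1-formality of the upper McCool groups.

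First, I would invoke Theorem~\ref{thm:McCoolFormal} (Berceanu--Papadima) to conclude that $P\Sigma_n^+$ is $1$-formal. Combined with \eqref{eq:ChenHilbert}, this gives
\[
\sum_{k\geq 0}\theta_{k+2}(P\Sigma_n^+)\, t^k = \Hilb(\fB_n,t).
\]
The two low-degree Chen ranks require separate brief treatment: $\theta_1$ equals $\rank H_1(P\Sigma_n^+;\Z) = \binom{n}{2}$, since the $\alpha_{ij}$ with $i>j$ project to an independent set in the abelianization; and $\theta_2$ equals the dimension of $\fB_n$ in degree $0$, which by \eqref{eq:dimb0} is $\dim I^2 = \binom{n}{3}$, the number of relators in Theorem~\ref{thm:Cohen-P-V-Wu}. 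These also match the constant term of $\Hilb(\fB_n,t)$ by a hockey-stick identity applied to $\sum_{s=2}^{n-1}\binom{s}{2} = \binom{n}{3}$, confirming consistency.

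Next, for $k\ge 1$ I would extract the coefficient of $t^k$ in \eqref{eq:HilbertSeriesBn} using the standard expansion $(1-t)^{-m}=\sum_{k\geq 0}\binom{m+k-1}{k}t^k$, which yields
\[
\theta_{k+2} \;=\; \sum_{s=2}^{n-1}\binom{s}{2}\binom{n-s+k}{k} + \binom{n}{4}.
\]
To turn this into the recursive form stated in the theorem, I would form the difference $\theta_{k+2}-\theta_{k+1}$, where the trailing $\binom{n}{4}$ cancels and the Pascal identity $\binom{n-s+k}{k}-\binom{n-s+k-1}{k-1}=\binom{n-s+k-1}{k}$ leaves
\[
\theta_{k+2}-\theta_{k+1} \;=\; \sum_{s=2}^{n-1}\binom{s}{2}\binom{n+k-1-s}{k}.
\]
Applying the Vandermonde-type convolution $\sum_{s}\binom{s}{a}\binom{N-s}{b}=\binom{N+1}{a+b+1}$ with $a=2$, $b=k$, $N=n+k-1$ (the effective summation range is forced by the support of the binomial factors), this collapses to $\binom{n+k}{k+3}$, i.e., $\theta_k-\theta_{k-1}=\binom{n+k-2}{k+1}$ for $k\ge 4$. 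Iterating from a base case computed directly (for instance, $\theta_3 = \sum_{s=2}^{n-1}\binom{s}{2}(n-s+1)+\binom{n}{4} = 2\binom{n+1}{4}$, again by a Vandermonde convolution) gives the closed formula.

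The only real obstacle is combinatorial bookkeeping: making sure the boundary terms in the Vandermonde identity are handled (the lower limit $s=2$ and upper limit $s=n-1$ agree with the natural support of $\binom{s}{2}\binom{n+k-1-s}{k}$), and cleanly separating the contribution of the $\binom{n}{4}\,t/(1-t)$ summand from the main $\sum\binom{s}{2}(1-t)^{-(n-s+1)}$ sum. Once these are checked, the formula for $\theta_k$ with $k\geq 3$ follows by telescoping, and the cases $k=1,2$ are covered by the separate discussion of the abelianization and of $\fB_n$ in degree zero.
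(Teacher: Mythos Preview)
Your proposal is correct and follows essentially the same route as the paper: both invoke $1$-formality and \eqref{eq:ChenHilbert} to identify $\theta_{k+2}$ with the $t^k$-coefficient of the Hilbert series from Theorem~\ref{thm:HilbertSeries}, and then extract that coefficient. The paper does this by differentiating $f(t)$ and evaluating at $0$, then writes ``simplifying this expression, we obtain the claimed recurrence formula'' without further detail; your version---expanding $(1-t)^{-m}$ as a binomial series, taking the difference $\theta_{k+2}-\theta_{k+1}$, and collapsing the resulting sum via the Vandermonde convolution $\sum_s\binom{s}{2}\binom{N-s}{k}=\binom{N+1}{k+3}$---is a more transparent execution of exactly that simplification step. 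One small caution you already flagged: for the direct computation of $\theta_3$ the natural support of $\binom{s}{2}\binom{n+1-s}{1}$ extends to $s=n$, so the Vandermonde sum $\binom{n+2}{4}$ overshoots by $\binom{n}{2}$, and you need that correction (together with the $\binom{n}{4}$ term) to land on $2\binom{n+1}{4}$; this is harmless but should be made explicit.
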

  
\begin{proof}
Clearly, $\theta_1(P\Sigma_n^+)=b_1(P\Sigma_n^+)=\binom{n}{2}$.  
To compute the other Chen ranks, recall from \eqref{eq:ChenHilbert} that 
$\sum_{k\geq0}\theta_{k+2}(P\Sigma_n^+)\cdot t^k={\rm Hilb}(\fB(P\Sigma_n^+),t)$. 
On the other hand, Theorem \ref{thm:HilbertSeries} provides 
an expression for the Hilbert series of the infinitesimal 
Alexander invariant $\fB_n=\fB(P\Sigma_n^+)$.   
Thus, it remains to find the coefficient of $t^k$ on the 
right-hand side of \eqref{eq:HilbertSeriesBn}.  Let 
\begin{equation}
\label{eq:ft1}
f(t)=\sum\limits_{s=2}^{n-1}\binom{s}{2}(1-t)^{-n+s-1}+\binom{n}{4} \, t(1-t)^{-1}. 
\end{equation}
Computing derivatives, we find that 
\begin{equation}
\label{eq:ft2}
f^{(k)}(t)= \sum\limits_{s=2}^{n-1}\binom{s}{2}\prod_{i=1}^k (n-s+i)(1-t)^{-n+s-k-1}+k!
\binom{n}{4}(1-t)^{-k-1}.
\end{equation}
Hence, the Chen ranks of $P\Sigma_n^+$ are given by
\begin{equation}
\label{eq:ft3}
\theta_{k+2}=\frac{1}{k!}f^{(k)}(0)= \sum\limits_{s=2}^{n-1}
\binom{s}{2}\prod_{i=1}^k (n-s+i)+k!\binom{n}{4}.
\end{equation}
Simplifying this expression, we obtain the claimed recurrence formula.
\end{proof}

\subsection{Distinguishing some related groups}
\label{subsec:discuss}
Both the pure braid groups $P_n$ and the upper McCool groups $P\Sigma_n^+$ 
are iterated semidirect products of the form $F_{n-1}\rtimes \dots \rtimes F_2\rtimes F_1$. 
Clearly, $P_1=P\Sigma_1^+ =\{1\}$ and $P_2=P\Sigma_2^+ =\Z$; it is also known that 
$P_3\cong P\Sigma_3^+\cong F_2\times F_1$.  Furthermore, 
both $P_n$ and $P\Sigma_n^+$ share the same LCS ranks and the same Betti numbers
as the corresponding direct product of free groups, $\Pi_n= \prod_{i=1}^{n-1}F_{i}$, see
\cite{Arnold69, Cohen-P-V-Wu08, FalkRandell, Kohno85}. In \cite{Cohen-P-V-Wu08}, 
F.~Cohen et al.~asked whether the groups $P_n$ and  $P\Sigma_n^+$ are 
isomorphic, for $n\ge 4$.  The next corollary answers this question.

\begin{corollary}
\label{cor:chen ppp}
For each $n\geq 4$, 
the pure braid group $P_n$, the upper McCool group $P\Sigma_n^+$, 
and the product group $\Pi_n$ are pairwise 
non-isomorphic.
\end{corollary}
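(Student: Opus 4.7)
The plan is to distinguish the three groups via their Chen ranks $\theta_k$, which are group isomorphism invariants. Theorem \ref{thm:chenranks} already provides $\theta_k(P\Sigma_n^+)$ as an explicit polynomial in $k$ of degree $n-2$ for $k$ large, so I would recall or compute the Chen ranks of the other two groups and compare.

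For $P_n$, I would invoke the Cohen--Suciu formula from \cite{Cohen-Suciu95} giving $\theta_k(P_n)=(k-1)\binom{n+1}{4}$ for $k$ sufficiently large, which is linear in $k$. For $\Pi_n=\prod_{i=1}^{n-1} F_i$, Chen ranks are additive across direct products: since $\Gamma_k(G_1\times G_2)=\Gamma_k(G_1)\times \Gamma_k(G_2)$ and $(G_1\times G_2)''=G_1''\times G_2''$, the Lie ring $\gr((G_1\times G_2)/(G_1\times G_2)'')$ splits as a direct sum, so $\theta_k(\Pi_n)=\sum_{i=2}^{n-1}\theta_k(F_i)$ for $k\ge 2$. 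Chen's Witt-type formula $\theta_k(F_m)=(k-1)\binom{m+k-2}{k}$ then exhibits $\theta_k(\Pi_n)$ as a polynomial in $k$ of degree $n-2$, with dominant contribution coming from the $F_{n-1}$ summand.

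With these three expressions in hand, the comparison is routine. The linear growth of $\theta_k(P_n)$ immediately rules out $P_n\cong P\Sigma_n^+$ and $P_n\cong\Pi_n$ for any $n\ge 4$, since the other two are polynomials in $k$ of degree $n-2\ge 2$. To distinguish $P\Sigma_n^+$ from $\Pi_n$, I would compare leading coefficients in $k$. The dominant term of $\theta_k(\Pi_n)$ comes from $\theta_k(F_{n-1})\sim k^{n-2}/(n-3)!$. On the other side, from Theorem \ref{thm:chenranks}, rewriting $\binom{n+i-2}{i+1}=\binom{n+i-2}{n-3}\sim i^{n-3}/(n-3)!$ and summing over $i$ from $3$ to $k$ produces $\theta_k(P\Sigma_n^+)\sim k^{n-2}/(n-2)!$. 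These leading coefficients differ by the factor $n-2$, which is at least $2$ for $n\ge 4$, so the two Chen-rank sequences eventually disagree and the groups cannot be isomorphic.

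The main obstacle is essentially bookkeeping: verifying the asymptotic leading coefficients carefully and cleanly invoking additivity of Chen ranks across products. Neither step is deep; the corollary falls out of Theorem \ref{thm:chenranks} combined with the classical Chen-rank formulas for free groups and pure braid groups.
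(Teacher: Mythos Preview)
Your argument is correct but takes a different route from the paper. The paper simply evaluates at $k=4$: invoking \cite{Cohen-Suciu95} for $\theta_4(P_n)=3\binom{n+1}{4}$ and $\theta_4(\Pi_n)=3\binom{n+2}{5}$, and reading off $\theta_4(P\Sigma_n^+)=2\binom{n+1}{4}+\binom{n+2}{5}$ from Theorem~\ref{thm:chenranks}, then checking these three numbers are pairwise distinct for $n\ge 4$. Your approach instead compares the asymptotic growth of $\theta_k$ as $k\to\infty$: linear for $P_n$ versus degree $n-2$ for the other two, and then separating $\Pi_n$ from $P\Sigma_n^+$ by the leading coefficient ($1/(n-3)!$ versus $1/(n-2)!$). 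Both work; the paper's version is shorter and avoids the asymptotic bookkeeping, while yours is more structural and makes transparent \emph{why} the three sequences must eventually diverge, not just that they happen to differ at one index. Your verification of additivity of Chen ranks under direct products and the leading-term computations are correct as stated.
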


\begin{proof}
As shown in \cite{Cohen-Suciu95}, the fourth Chen ranks of $P_n$  
and  $\Pi_n$ are given by
$\theta_4(P_n)=3\binom{n+1}{4}$ and 
$\theta_4(\Pi_n)=3\binom{n+2}{5}$, respectively.   
On the other hand, from Theorem \ref{thm:chenranks}, we have that 
\begin{equation}
\label{eq:chen psigman}
\theta_{4}(P\Sigma_n^+)= 2\binom{n+1}{4}+\binom{n+2}{5}.  
\end{equation}
Comparing these ranks shows that the groups $P_n$, $\Pi_n$, and 
$P\Sigma_n^+$ have non-isomorphic maximal metabelian quotients, 
and thus are pairwise non-isomorphic.
\end{proof}

In \cite{Bardakov-Mikhailov08}, Bardakov and Mikhailov attempted to 
prove that $P_4$ is not isomorphic to $P\Sigma_4^+$ by showing that
these two groups have different single-variable Alexander polynomials. 
To explain their approach (and why it does not work), consider 
a finitely presented group $G$, and let $H=G_{\ab}/{\rm torsion}$ 
be the maximal torsion-free abelian quotient of $G$. 
The group ring $R=\Z{H}$ is a Noetherian, commutative ring 
and a unique factorization domain.  Moreover, the $R$-module  
$\overline{B}(G):=B(G)\otimes_{\Z{G_{\ab}}} \Z{H}$ 
is finitely presented;  let $E_0(\overline{B}(G))$ 
be the ideal of maximal minors of a presentation matrix for this 
module.  The Alexander polynomial of $G$, then, is 
the greatest common divisor (gcd) of all elements 
of $E_0(\overline{B}(G))$;  this polynomial, denoted $\Delta_G$, 
is well-defined up to units in $R$. 

Now let $\phi\colon G\to \Z$ be a homomorphism, and denote by 
$\phi\colon \Z{G}\to \Z{\Z}$ its extension to group rings.  Identifying 
$\Z{\Z}=\Z[t^{\pm 1}]$ and letting  
$B(G)^{\phi}:=\overline{B}(G)\otimes_{\Z{H}} \Z[t^{\pm 1}]$ be the 
corresponding $\Z[t^{\pm 1}]$-module, the single-variable 
Alexander polynomial of $G$ with respect to $\phi$, denoted by 
$\Delta^{\phi}_G(t)$, is the gcd of all elements of $E_0(B(G)^{\phi})$.

\begin{example}
\label{ex:P3}
Let $P_3=\langle x_1,x_2,x_3 \mid x_1x_2x_3~{\rm central}\rangle$ 
be the pure braid group on $3$ strands.   Letting 
$\phi\colon P_3\to \Z$ be the homomorphism given by $\phi(x_i)=t$, 
we find that $\Delta^{\phi}_{P_3}(t)=(1-t^3)(1-t)$. On the other hand, if we take 
the presentation $P_3=\langle x_1,x_2,z \mid z~{\rm central}\rangle$, 
and let $\psi\colon P_3\to \Z$ be the homomorphism given by 
$\psi(x_1)=\psi(x_2)=\psi(z)=t$, then $\Delta^{\psi}_{P_3}(t)=(1-t)^2$.
\end{example}

This example shows that the single-variable Alexander 
polynomial of a finitely presented group $G$ depends on a choice of 
presentation for the group, and thus is not an isomorphism-type invariant.  
Hence, the argument from \cite{Bardakov-Mikhailov08} does {\em not}\/ 
rule out the existence of an isomorphism  $P_4\cong P\Sigma_4^+$.  
On the other hand, the (multi-variable) Alexander polynomial $\Delta_G$ 
{\em is}\/ an  isomorphism-type invariant for finitely presented groups $G$.  
Nevertheless, the groups $P_4$ and $P\Sigma_4^+$ cannot be distinguished 
by means of the multi-variable Alexander polynomial.  Indeed, 
it is known that $\Delta_{P_n}=1$, for all $n\ge 4$ (see \cite[Theorem 9.15]{Suciu11}), 
while direct computation shows that $\Delta_{P\Sigma_4^+}=1$, too.

\section{Resonance varieties and resonance schemes}
\label{sect:resonance scheme}

We start this section with a quick review of the resonance varieties 
of a connected, locally finite, graded, graded-commutative algebra.  
We then discuss the natural scheme structure of these varieties, 
and give a quick introduction to the Chen ranks formula.  

 \subsection{Resonance varieties}
\label{subsec:resonance}
Let $V$ be a complex vector space of finite dimension, and let $V^*$ be its dual. 
We write $E=\bigwedge V$ for the exterior algebra on $V$, and $S=\Sym(V^*)$ for the 
symmetric algebra on $V^*$.  
Let $\{e_1,\dots,e_n\}$ and $\{x_1,\dots,x_n\}$ be dual bases for $V$ and 
$V^*$, respectively, and identify the symmetric algebra $\Sym(V^*)$ 
with the polynomial ring $S=\C[x_1,\dots,x_n]$.   
 
Now let $A$ be graded, graded-commutative 
$\C$-algebra; we will assume that $A$ is connected and locally finite. 
The (degree $i$, depth $d$) \emph{resonance varieties}\/ 
of the graded algebra $A$ are the homogeneous algebraic subvarieties 
of the affine space $A^1$ defined as 
\begin{equation}
\label{eq:resvar}
\cR^i_d(A)=\big\{ a\in A^1\mid \dim_{\C}H^i(A, \delta_a)\geq d \big\}.
\end{equation}
where $(A,\delta_a)$ is the cochain complex (known as the {\em Aomoto complex}) 
with differentials $\delta^i_{a}\colon A^i\to A^{i+1}$ given by 
$\delta^i_{a}(u)=a\cdot u$.  
According to \cite{Papadima-Suciu10, Suciu12}, the evaluation of 
the cochain complex \eqref{eq:universalAomoto} at an element $a\in A^1$ 
coincides with the Aomoto complex $(A,\delta_a)$.
 
When $A=E$ is an exterior algebra, the 
Aomoto complex $(E,\delta_a)$ is acyclic, for each non-zero 
element $a\in E^1$, and thus $\cR^i_d(E)\subseteq \{0\}$, 
for all $i$ and $t$.  In general, though, the resonance varieties 
of a graded algebra $A$ can be arbitrarily complicated.  
For more details on this subject, we refer to 
\cite{Cohen-Suciu99, Matei-Suciu00, Suciu01, 
Papadima-Suciu06, Dimca-Papadima-Suciu09, Suciu11, 
Suciu12, Papadima-Suciu15, Cohen-Schenck15}, 
and references therein.

We will focus in this paper on the degree-$1$ resonance varieties, 
$\cR_d(A):=\cR^1_d(A)$. These varieties depend only on the multiplication 
map, $\mu_A\colon A^1\wedge A^1\to A^2$, and thus, only on the quadratic 
closure $\bar{A}$, defined in \eqref{eq:quadclose}, i.e., $\cR_d(A)=\cR_d(\bar{A})$. 
Moreover, it is readily seen that 
\begin{equation}
\label{eq:res1}
\RR_{d}(A) =\left\{ a \in A^1 \: \left|\:
\begin{array}
[c]{l}%
\text{there is a linear subspace $W \subset A^1$ of dimension 
$d$} \\ \text{such that $a\notin W$ and $\mu_A(a , b)= 0$ for all $b\in W$}
\end{array}
\right\}\right. .
\end{equation}

Now let $G$ be a finitely generated group, and suppose the 
cohomology algebra $A=H^{\ast}(G;\C)$ is locally finite. 
The resonance varieties of $G$ are then defined as  
$\cR^i_d(G):=\cR^i_d(A)$. Most important to us 
is the first  (depth-$1$) resonance variety,
\begin{equation}
\label{eq:FirstResonance}
\cR_1(G)=\{a\in H^1(G,\C)\mid \exists b \in H^1(G,\C),\, b\ne \lambda a,\, a b=0\}, 
\end{equation}
in which case no further assumption on $G$ 
besides finite generation is needed to insure that 
$\cR_1(G)$ is a Zariski closed set. The following 
(easy to prove) naturality property will be useful in the sequel.

\begin{lemma}[\cite{Papadima-Suciu06}]  
\label{lem:PS06}
Let $G_1$ be a finitely generated group, and let 
$\alpha\colon G_1\rightarrow G_2$ be a surjective homomorphism.  
Then the induced monomorphism in cohomology, 
$\alpha^{\ast}\colon H^1(G_2;\C)\rightarrow H^1(G_1;\C)$, 
takes $\cR_1(G_2)$ to $ \cR_1(G_1)$.
\end{lemma}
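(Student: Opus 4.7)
The plan is to unpack the definition of $\cR_1$ given in \eqref{eq:FirstResonance} and observe that everything follows once we know $\alpha^*$ is injective and is a ring map. The key preliminary fact is injectivity of $\alpha^*$ in degree $1$: via the identification $H^1(G;\C)\cong \hom(G_{\ab},\C)$, the map $\alpha^*$ is the transpose of the map $\alpha_{\ab}\colon (G_1)_{\ab}\otimes \C\to (G_2)_{\ab}\otimes \C$, and surjectivity of $\alpha$ forces $\alpha_{\ab}$ to be surjective. Hence $\alpha^*$ is injective in degree $1$.

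Next, given $a\in \cR_1(G_2)$, the definition \eqref{eq:FirstResonance} furnishes an element $b\in H^1(G_2;\C)$ with $b\notin \C\cdot a$ and $ab=0\in H^2(G_2;\C)$. The natural candidate to witness $\alpha^*(a)\in \cR_1(G_1)$ is $\alpha^*(b)$. Since $\alpha^*$ is a homomorphism of graded algebras, we immediately get
\[
\alpha^*(a)\cdot \alpha^*(b) \;=\; \alpha^*(ab) \;=\; 0 \quad \text{in } H^2(G_1;\C).
\]
It remains to verify $\alpha^*(b)\notin \C\cdot \alpha^*(a)$. Suppose for contradiction that $\alpha^*(b)=\lambda\,\alpha^*(a)$ for some $\lambda\in\C$; then $\alpha^*(b-\lambda a)=0$ and injectivity of $\alpha^*$ in degree $1$ forces $b=\lambda a$, contradicting $b\notin\C\cdot a$.

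There is essentially no obstacle here; the only subtle point is ensuring the non-proportionality condition carries over, which is precisely what the injectivity of $\alpha^*$ gives us. I would present the argument in two short steps: first the general injectivity claim (as a stand-alone observation valid for any surjection of finitely generated groups), then the one-line verification that both clauses of \eqref{eq:FirstResonance} are preserved under $\alpha^*$.
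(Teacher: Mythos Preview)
Your argument is correct and is exactly the natural proof one would give. The paper itself does not supply a proof of this lemma: it cites the result from \cite{Papadima-Suciu06} and remarks only that it is ``easy to prove,'' so there is nothing further to compare.
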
 
 
\subsection{Resonance schemes}
\label{subsec:resonanceScheme}
Before proceeding, let us review some basic notions from commutative 
algebra and the geometry of schemes, as recounted for instance 
in \cite{Eisenbud95, Eisenbud05}. We work over a polynomial ring 
$S=\C[x_1,\dots ,x_n]$, and denote by $\V(\mathfrak{I})\subset \C^n$ 
the variety defined by an ideal $\mathfrak{I}\subset S$.  

Let $M$ be a finitely generated $S$-module.  Suppose that a minimal 
primary decomposition of the annihilator ideal of $M$ is given by 
\begin{equation}
\label{eq:primarydecom} 
\Ann(M)=\bigcap_{i=1}^p Q_i .
\end{equation}
Let $\fq_i=\sqrt{Q_i}$ be the corresponding radical ideals (or, associated primes).
The varieties $\bV(\fq_i)$ cut out by the ideals $\fq_i$ 
for $1\leq i\leq p$ form the \emph{scheme}\/ $\Spec(S/\Ann(M))$ associated to $M$. 
Geometrically, this scheme consists of {\em isolated}\/ components, 
which are the irreducible components of the support variety $\V(\Ann(M))$,  
and of {\em embedded}\/ components, which are certain subvarieties 
of the isolated components. 

We say that the variety $\V(\Ann(M))$ is \emph{reduced}\/ 
as a scheme if the ideals $Q_i$ are radical for $1\leq i\leq p$.
We also say that $\V(\Ann(M))$ is \emph{weakly reduced}\/ 
as a scheme if the ideals $Q_i$ are radical for $1\leq i\leq k$ 
and if $\fq_i=\fm$ for $k+1\leq i\leq p$, where 
$\fm=\langle x_1, \dots, x_n\rangle$ is the maximal 
ideal of $S$ at $0$; in other words, the only possible 
embedded component is at $0$.   

Suppose now that $A$ is a connected, locally finite, graded, 
graded-com\-mutative $\C$-algebra defined over $\Q$.
As shown in \cite[Proposition 6.2]{Papadima-Suciu15}, 
there is then a commutator-relators group $G$ such that 
the algebras $H^*(G,\C)$ and $A$ have the same quadratic 
closure, and hence have the same first resonance variety. 

On the other hand, as proved in \cite[Theorem 3.9]{Matei-Suciu00} 
(see also \cite{Suciu11, Cohen-Schenck15}), if $G$ is a 
commutator-relators group, then $\cR_1(G) = \V(\Ann(\fB(G)))$, 
where recall $\fB(G):=\fB(\fh(H^*(G,\C)))$ is the infinitesimal 
Alexander invariant of $G$. Thus, the first resonance variety 
of the algebra $A$ can be written as 
\begin{equation}
\label{eq:r1a}
\cR_1(A) = \V(\Ann(\fB(A))).  
\end{equation}
Thus, it is natural to view $\cR_1(A)$ as the set of closed points 
in the subscheme of $\spec(S)$ defined by $\Ann\big(\fB(A))$, 
which we call the {\em resonance scheme}\/ of $A$.  
Moreover, the resonance scheme of $A$ depends only on the quadratic 
closure $\bar{A}$ defined in \eqref{eq:quadclose}, 
that is,  $\Ann\big(\fB(A)=\Ann\big(\fB(\bar{A}))$.

More generally, for each $d\ge 1$, 
the depth $d$ resonance variety $\cR_d(G)$ 
can be viewed as the support variety of the annihilator of 
the $d$-th exterior power of the $S$-module $\fB(G)$,
\begin{equation}
\label{eq:depthdReso}
\cR_d(G)= \bV\, \bigg(\Ann\Big(\bigwedge^d\fB(G)\Big)\bigg)\, , 
\end{equation}
see \cite{Matei-Suciu00, Papadima-Suciu06}. Hence, we may define 
the depth $d$ resonance scheme of the graded algebra $A$ as the 
scheme defined by the associated primes of the annihilator ideal of 
$\bigwedge^d\fB(G)$.

\subsection{Bounding the resonance variety}
\label{subsec:bound}
The next lemma provides a `lower-bound' for the ideal $\Ann(\fB(A))$ and
an `upper bound' for the variety $\cR_1(A)$, in the case when the infinitesimal 
Alexander invariant of $A$ admits a suitable presentation.  

Recall that for 
a finitely generated $S$-module $M$, we let $E_0(M)$ be the ideal of 
maximal minors for a presentation matrix for $M$. As is well known,
this ideal does not depend on the choice of  presentation
$S^m\rightarrow S^n \rightarrow M$; furthermore,   
given such a presentation,  
$\Ann(M)^n \subseteq E_0(M) \subseteq \Ann(M)$.
In particular, if $M$ is a cyclic $S$-module 
(i.e., it is generated by a single element), then $E_0(M)=\Ann(M)$.

\begin{lemma} 
\label{lem:bound lemma}
Let  $\fB=\fB(A)$ be the infinitesimal Alexander invariant of 
a graded algebra $A$ as above. 
Suppose $\fB$ admits a block-triangular presentation matrix $\Omega$, 
with diagonal blocks $\Omega_{ii}$ for $1\leq i\leq q$.  
Let $\fB_{i}$ denote the $S$-module with presentation matrix  $\Omega_{ii}$. 
Then
\begin{enumerate}
\item\label{it1} 
 $E_0(\fB) \supseteq  \prod_{i=1}^q  E_0(\fB_{i})$.
\item\label{it2}
 $\cR_1(A)\subseteq  \bigcup_{i=1}^q \V(\Ann(\fB_{i}))$.
\item\label{it3}
 Furthermore, if each $\fB_i$ is a cyclic module, then
$\Ann(\fB) \supseteq  \prod_{i=1}^q  \Ann(\fB_{i})$.
\end{enumerate}

 \end{lemma}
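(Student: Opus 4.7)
The plan is to establish part \eqref{it1} directly from the definition of the zeroth Fitting ideal and then to derive parts \eqref{it2} and \eqref{it3} as formal consequences. For \eqref{it1}, I would partition the rows of $\Omega$ as $R_1\sqcup\cdots\sqcup R_q$ with $|R_i|=n_i$ and the columns as $C_1\sqcup\cdots\sqcup C_q$ with $|C_i|=m_i$, so that the triangularity hypothesis forces the entries of $\Omega$ in rows $R_i$ and columns $C_j$ to vanish whenever $(i,j)$ lies on one fixed side of the diagonal. Given an $n_i\times n_i$ minor $\delta_i$ of each diagonal block $\Omega_{ii}$, the key observation is that restricting $\Omega$ to the columns selected from each $C_i$ (and no columns from outside) produces an $n\times n$ submatrix that is itself block-triangular in the same pattern, with diagonal blocks drawn precisely from the $\delta_i$. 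Its determinant therefore equals $\prod_i\delta_i$, and this exhibits every product of generators of $\prod_i E_0(\fB_i)$ as a maximal minor of $\Omega$, proving \eqref{it1}.

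For \eqref{it2}, I would invoke the standard containments $\Ann(M)^k\subseteq E_0(M)\subseteq\Ann(M)$ valid for every $k$-generated $S$-module $M$, which imply the equality of support varieties $\V(E_0(M))=\V(\Ann(M))$. Combining this with equation \eqref{eq:r1a}, which identifies $\cR_1(A)$ with $\V(\Ann(\fB))$, together with \eqref{it1}, yields the chain
\[
\cR_1(A)=\V(\Ann(\fB))=\V(E_0(\fB))\subseteq \V\!\Big(\prod_{i=1}^q E_0(\fB_i)\Big)=\bigcup_{i=1}^q\V(E_0(\fB_i))=\bigcup_{i=1}^q\V(\Ann(\fB_i)),
\]
which is exactly the desired inclusion. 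Part \eqref{it3} is even shorter: for a cyclic module the zeroth Fitting ideal coincides with the annihilator, so \eqref{it1} upgrades directly to $\Ann(\fB)\supseteq E_0(\fB)\supseteq\prod_i E_0(\fB_i)=\prod_i\Ann(\fB_i)$.

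The only real obstacle is the bookkeeping in \eqref{it1}: one has to verify carefully that choosing exactly $n_i$ columns from each $C_i$ produces a submatrix whose off-diagonal-block entries still vanish in the correct triangular pattern, so that its determinant factorizes cleanly as $\prod_i\delta_i$ (rather than picking up extra cross-terms). Once this is in place, \eqref{it2} and \eqref{it3} drop out automatically from well-known properties of Fitting ideals and from equation \eqref{eq:r1a}.
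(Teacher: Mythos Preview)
Your proposal is correct and follows essentially the same approach as the paper's own proof, which is very terse: the paper simply says that \eqref{it1} follows from ``the standard way of computing determinants of block-triangular matrices,'' that \eqref{it2} follows from \eqref{it1} together with equation \eqref{eq:r1a}, and that \eqref{it3} follows from \eqref{it1} and the fact (stated just before the lemma) that $E_0(M)=\Ann(M)$ for cyclic $M$. Your write-up unpacks exactly these ingredients, including the intermediate step $\V(E_0(M))=\V(\Ann(M))$ needed for \eqref{it2}, so there is nothing to correct.
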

 
\begin{proof}
The first claim follows from the standard way of computing determinants 
of block-triangular matrices. 
The second claim follows at once from statement \eqref{it1} and equation \eqref{eq:r1a}. 
The last claim follows from statement \eqref{it1}  and the paragraph preceding the lemma.   
\end{proof} 

\subsection{Chen ranks and resonance varieties}
\label{subsec:chen-resonance}

Recently, D.~Cohen and H.~Schenck proved the following theorem,
which establishes the Chen ranks conjecture from \cite{Suciu01} in 
a wider setting.  

A subspace $L\subseteq H^1(G;\C)$ is said to be \emph{$p$-isotropic}\/ (for some $p\ge 0$) 
if the restriction of the cup product map $H^1(G;\C)\wedge H^1(G;\C)\to H^2(G;\C)$ to 
$L\wedge L$ has rank $p$. In particular,  a subspace $L\subseteq H^1(G;\C)$  is 
$0$-isotropic (or simply, isotropic)
if the restriction of the cup product map $L\wedge L\to H^2(G;\C)$ is trivial.
Finally, two subspaces $U$ and $V$ of $H^1(G;\C)$
are said to be {\em projectively disjoint}\/ if $U\cap V=\{0\}$.  
  
\begin{theorem}[\cite{Cohen-Schenck15}]
\label{thm:Chenrankformula}
Let $G$ be a finitely presented, commutator-relators $1$-formal group.
Assume that the components of $\cR_1(G)$ are $0$-isotropic, 
projectively disjoint, and weakly reduced as a scheme. 
Then, for all $k \gg 0$, the Chen ranks of $G$ are given by
\begin{equation}
\label{eq:ChenranksConjecture}
\theta_k(G)= \sum_{m\geq 2}h_m(G) \cdot \theta_k(F_m), 
\end{equation}
where $h_m(G)$ is the number of $m$-dimensional components of $\cR_1(G)$.
\end{theorem}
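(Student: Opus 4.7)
The plan is to exploit $1$-formality to convert the Chen ranks computation into a Hilbert series computation for the infinitesimal Alexander invariant $\fB=\fB(G)$, and then use the hypotheses on the scheme structure of $\cR_1(G)$ to decompose $\fB$ into pieces, one per irreducible component, each of which can be compared to the free group case.

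First, since $G$ is $1$-formal and commutator-relators, equation \eqref{eq:ChenHilbert} gives $\theta_{k+2}(G)=\dim_{\C}\fB_k$, so it suffices to determine the Hilbert series of $\fB$ in high degrees. Second, because $\cR_1(G)=\V(\Ann(\fB))$ is weakly reduced, the minimal primary decomposition $\Ann(\fB)=Q_1\cap\cdots\cap Q_r \cap Q_0$ has $\sqrt{Q_i}=\fp_{L_i}$, the (radical, hence prime) ideal cutting out the $m_i$-dimensional isotropic component $L_i$, and $\sqrt{Q_0}=\fm$, the maximal ideal at the origin. Consequently, there is a natural $S$-linear map
\begin{equation*}
\fB \longrightarrow \bigoplus_{i=1}^r \fB/\fp_{L_i}\fB,
\end{equation*}
whose kernel and cokernel are $\fm$-torsion, hence concentrated in finitely many degrees and invisible to the Hilbert series for $k\gg 0$. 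Projective disjointness ensures that the associated primes $\fp_{L_i}$ are pairwise coprime outside $\fm$, so distinct components do not interact and the direct sum is additive on Hilbert series.

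Third, and this is the geometric heart of the argument, each summand $\fB/\fp_{L_i}\fB$ must be identified (up to $\fm$-torsion) with the infinitesimal Alexander invariant of a free group $F_{m_i}$. The $0$-isotropy of $L_i$ means that the cup product vanishes on $L_i\wedge L_i$, so the ambient quadratic algebra $\bar A$ restricted along the inclusion $L_i\hookrightarrow A^1$ factors through the exterior algebra on $L_i$ with zero multiplication on degree $2$, which is precisely the cohomology ring of $F_{m_i}$. Using the presentation of $\fB$ from Lemma \ref{lem:presentationAlex} and functoriality of the Aomoto construction \eqref{eq:universalAomoto} under the quotient $A\to A/\fp_{L_i}$, one obtains a comparison map from $\fB(F_{m_i})$ to $\fB/\fp_{L_i}\fB$ that is an isomorphism in sufficiently high degree. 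Summing over $i$ and collecting components by dimension yields
\begin{equation*}
\theta_k(G)=\sum_{i=1}^r \theta_k(F_{m_i})+O(1)=\sum_{m\geq 2} h_m(G)\,\theta_k(F_m) \quad\text{for } k\gg 0.
\end{equation*}

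The hard part is the identification in the third step: one must control the restriction of the quadratic presentation of $\fh(A)$ to the subspace $L_i^*$ and verify that the resulting module agrees with $\fB(F_{m_i})$ asymptotically. This is where both the $0$-isotropy (to kill quadratic relations on $L_i$) and the weakly reduced scheme structure (to ensure the comparison map has $\fm$-torsion kernel rather than genuine higher-degree discrepancies) are simultaneously used; without either hypothesis, extra relations or embedded components away from $0$ would perturb the Hilbert series in all degrees, as is precisely illustrated by $P\Sigma_n^+$ for $n\ge 4$.
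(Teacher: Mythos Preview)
This theorem is not proved in the present paper: it is quoted verbatim from \cite{Cohen-Schenck15} (see the attribution immediately preceding the statement in \S\ref{subsec:chen-resonance}), and the paper uses it only as background to contrast with the behavior of $P\Sigma_n^+$. There is therefore no proof in the paper to compare your proposal against.

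That said, your sketch does capture the broad strategy of Cohen--Schenck: pass to the infinitesimal Alexander invariant via $1$-formality, decompose it along the isolated components of $\cR_1(G)$, and identify each piece with $\fB(F_m)$ using $0$-isotropy. However, several of your transitional claims are not justified and would require real work. The assertion that the kernel and cokernel of $\fB\to\bigoplus_i\fB/\fp_{L_i}\fB$ are $\fm$-torsion does not follow formally from weak reducedness of $\Ann(\fB)$ alone; one needs to control the associated primes of $\fB$ itself, not just of its annihilator, and Cohen--Schenck in fact work with a localization argument together with an explicit analysis of the presentation matrix near a generic point of each $L_i$. Likewise, your third step---identifying $\fB/\fp_{L_i}\fB$ with $\fB(F_{m_i})$ up to $\fm$-torsion via ``functoriality of the Aomoto construction''---is the substantive content of the Cohen--Schenck paper, and the map you describe is not the one they use; they instead localize $\fB$ at $\fp_{L_i}$ and compute ranks directly. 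Your outline is a reasonable heuristic for why the formula should hold, but it is not a proof, and the paper under review does not supply one either.
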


In the same paper, Cohen and Schenck showed that 
the first resonance varieties of the McCool groups satisfy 
the hypotheses of Theorem \ref{thm:Chenrankformula}, 
and that the Chen ranks of these groups are given by
\begin{equation}
\label{eq:chen ps}
\theta_k(P\Sigma_n)=(k-1)\binom{n}{2}+(k^2-1)\binom{n}{3}, \quad \text{for $k\gg 0$}.
\end{equation}
 
In the sections that follow, we will compute the first resonance variety
$\cR_1(P\Sigma^+_n)$ and its scheme structure. Rather surprisingly, 
the Chen ranks formula \eqref{eq:ChenranksConjecture} does not hold for 
the groups $P\Sigma^+_n$ with $n\geq 4$. We will show that not all the 
components of $\cR_1(P\Sigma^+_n)$ are isotropic, and that $\cR_1(P\Sigma^+_n)$
is not weakly reduced as a scheme, as soon as  $n\geq 4$. 
Thus, in this range, the upper McCool groups $P\Sigma^+_n$ do not satisfy 
all the hypothesis of Theorem \ref{thm:Chenrankformula}.  

\section{The first resonance variety of $P\Sigma_n^+$}
\label{sec:resonance}

In this section, we compute the first resonance varieties of the 
upper McCool groups and apply the results to analyze several
properties of these groups.

\subsection{The first resonance variety of $P\Sigma_n^+$}
\label{subsec:resupper}

We are now ready to describe the first resonance variety 
of the upper McCool group $P\Sigma_n^+$, for all $n\ge 2$. 
Throughout, we 
will identify $H^1(P\Sigma_n^+;\C)$ with the $\C$-vector space 
$\C^{\binom{n}{2}}$,  endowed with the basis  $\{u_{ij} \mid 1\leq j<i \leq n\}$ 
provided by Theorem \ref{thm:Cohen-P-V-Wu}.  As before, $x_{ij}$ will 
denote the dual coordinate functions. 
For $n=2$, we have that  $\cR_1(P\Sigma_n^+)=\cR_1(\Z)=\{0\}$.

\begin{theorem}
\label{thm:Resonance}
For each $n\ge 3$, the resonance variety $\cR_1(P\Sigma_n^+)$ decomposes 
into irreducible components as 
\begin{equation}
\label{eq:r1ps}
\cR_1(P\Sigma_n^+)=\bigcup\limits_{2\leq j<i\leq n} L_{ij}, 
\end{equation}
where $L_{ij}\cong \C^{j}$ is the linear subspace of $\C^{\binom{n}{2}}$
defined by the equations 
\begin{equation}
\label{eq:L_ij}
\left\{
\begin{array}{ll}
x_{i,l}+x_{j,l}=0, & \text{for $1\leq l\leq j-1$};\\[2pt]
x_{i,l}=0 & \text{for $j+1\leq l\leq i-1$};\\[2pt]
x_{s,t}=0 & \text{for $s\neq i, s\neq j$, and $1\leq t<s$}.
\end{array}
\right.
\end{equation}
\end{theorem}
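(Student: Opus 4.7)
The plan is to establish both inclusions $\bigcup L_{ij}\subseteq \cR_1(P\Sigma_n^+)\subseteq \bigcup L_{ij}$ separately, using the minimal presentation of $\fB_n$ from Proposition \ref{prop:reducedPres} together with the cohomology relations from Theorem \ref{thm:Cohen-P-V-Wu}.

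For the forward inclusion, using the defining equations \eqref{eq:L_ij}, I would parametrize a generic $a \in L_{ij}$ as $a = \mu u_{ij} + \sum_{l=1}^{j-1} \lambda_l (u_{il} - u_{jl})$. Invoking the relations $u_{ij}(u_{il}-u_{jl}) = 0$ for $l < j < i$ together with $u_{ij}^2=0$ and graded-commutativity, a short calculation gives $a \cdot u_{ij} = 0$ for every $a \in L_{ij}$. Consequently, for any nonzero $a \in L_{ij}$, I can exhibit $b \notin \C\cdot a$ with $ab=0$: take $b = u_{ij}$ if $a$ is not proportional to $u_{ij}$, and take $b = u_{i1} - u_{j1}$ otherwise (this uses $j \ge 2$). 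Hence $a \in \cR_1(P\Sigma_n^+)$ by \eqref{eq:r1g}.

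For the reverse inclusion, I would start from the Matei--Suciu identity \eqref{eq:r1a}, giving $\cR_1(P\Sigma_n^+) = \V(\Ann(\fB_n))$. Since each diagonal block $\vec{v}_{ijk}$ of the block-triangular presentation from Proposition \ref{prop:reducedPres} presents a cyclic $S$-module $\fB_{ijk}$, Lemma \ref{lem:bound lemma}(2) yields the preliminary bound
\[
\cR_1(P\Sigma_n^+) \subseteq \bigcup_{1\le k<j<i\le n} V_{ijk},
\]
where $V_{ijk}$ is the linear variety cut out by the entries of $\vec{v}_{ijk}$. A direct inventory of free coordinates shows that $\dim V_{ijk} = n+j-3$, which is generally larger than $\dim L_{i'j'} = j'$, so this bound alone is too crude. \emph{The main obstacle} is refining it to $V_{ijk} \cap \cR_1(P\Sigma_n^+) \subseteq \bigcup_{i',j'} L_{i'j'}$ for every triple. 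I would address this by analyzing the Aomoto complex $(A,\delta_a)$ at a generic $a \in V_{ijk}$: using the basis of $A^2$ derived from Theorem \ref{thm:Cohen-P-V-Wu}, one must show that if any of the "extra" coordinates of $a$ (such as $x_{l_4 i}$ for $l_4 > i$, or $x_{il_3}$ for $j < l_3 < i$) is nonzero while $a \notin \bigcup L_{i'j'}$, then the kernel of multiplication by $a$ is exactly $\C\cdot a$, so $a \notin \cR_1(P\Sigma_n^+)$. An alternative strategy is to leverage the Gröbner basis $\cG$ of Theorem \ref{thm:GrobnerBasis}, whose additional quadratic generators $x_{kl}x_{kp}\cdot r_{ijk}$ and $x_{jq}x_{kp}\cdot r_{ijk}$ impose scheme-theoretic constraints forcing the reduction of $V_{ijk}$ to the advertised union of $L_{i'j'}$'s modulo embedded components supported at the origin.

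Once both inclusions are in hand, the decomposition is irredundant because each $L_{ij}$ is linear, hence irreducible, and distinct $L_{ij}$'s are projectively disjoint by direct inspection of the defining equations \eqref{eq:L_ij}: a vector supported on the "$(i,j)$-pattern" cannot satisfy the orthogonal "$(s,t)$-pattern" unless it vanishes. The technical weight of the proof thus rests entirely on the rank computation (or equivalent scheme-theoretic reduction) at the generic point of each $V_{ijk}$, for which I would expect to proceed by induction on $n-i$ and on $i-j$, peeling off one block of "extra" coordinates at a time.
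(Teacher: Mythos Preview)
Your forward inclusion and the irredundancy argument are correct and match the paper (you are in fact slightly more careful than the paper in treating the case $a\in\C\cdot u_{ij}$ separately).

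For the reverse inclusion, your second strategy---using the Gr\"obner basis $\cG$ of Theorem~\ref{thm:GrobnerBasis}---is exactly what the paper does, but you are missing the single observation that makes it work in one line and renders both the direct Aomoto-complex analysis and the proposed double induction on $n-i$ and $i-j$ unnecessary. Apply Lemma~\ref{lem:bound lemma}\eqref{it2} not to the presentation with diagonal blocks $\vec{v}_{ijk}$, but to the Gr\"obner basis itself, whose diagonal blocks are the enlarged vectors $\vec{w}_{ijk}$ of Corollary~\ref{cor:diag}. Each $\vec{w}_{ijk}$ augments $\vec{v}_{ijk}$ by the quadratic monomials $x_{kl}x_{kp}$ and $x_{jq}x_{kp}$ from $\cD_{ijk}$. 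At the level of the \emph{variety} (not the scheme), the diagonal entries $x_{kl}^2$ among these force $x_{kl}=0$ for every $l<k$, after which the remaining quadratics are vacuous. Substituting $x_{kl}=0$ into the linear entry $x_{il}+x_{jl}+x_{kl}$ yields $x_{il}+x_{jl}=0$ for $l<k$; together with $x_{ik}+x_{jk}=0$ and the various $x_{st}=0$ coming from $\bg_1,\dots,\bg_4,\bh_3,\dots,\bh_8$, one checks by direct inspection that the zero locus $L_{ijk}$ of $\vec{w}_{ijk}$ is a \emph{linear} subspace satisfying $L_{ijk}\subseteq L_{ij}$ (indeed $\dim L_{ijk}=k+1$, with equality $L_{i,j,j-1}=L_{ij}$). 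Hence
\[
\cR_1(P\Sigma_n^+)\ \subseteq\ \bigcup_{1\le k<j<i\le n} L_{ijk}\ \subseteq\ \bigcup_{2\le j<i\le n} L_{ij}
\]
in a single step. The embedded components you allude to play no role at this stage; they enter only in the finer scheme-theoretic analysis of \S\ref{sec:scheme}.
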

 
\begin{proof}
Fix $n\ge 3$, and write 
$L=\bigcup\limits_{2\leq j<i\leq n} L_{ij}$. 
We claim that $L= \cR_1(P\Sigma_n^+)$.
In order to verify the forward inclusion, 
we need to check that $L_{ij}\subseteq \cR_1(P\Sigma_n^+)$ for all $i>j$.
If $a\in L_{ij}$ is non-zero, then  the system of linear equations 
\eqref{eq:L_ij} implies that $a$ is of the form 
\begin{equation}
\label{eq:aformula}
a=\sum_{l=1}^{j-1}a_{il}(u_{il}-u_{jl})+a_{ij}u_{ij}.
\end{equation} 
Using Theorem \ref{thm:Cohen-P-V-Wu}, it is easy to check that $a\cdot u_{ij}=0$.
Hence, from \eqref{eq:FirstResonance},
we obtain that $L_{ij}\subseteq \cR_1(P\Sigma_n^+)$.
 
For the reverse inclusion, we use the Gr\"{o}bner basis of the infinitesimal 
Alexander invariant $\fB_n$ provided by Theorem \ref{thm:GrobnerBasis}. 
For each diagonal vector $\vec{w}_{ijk}$ from Corollary \ref{cor:diag},
the equation $\vec{w}_{ijk}=0$  defines a linear space $L_{ijk}$: the linear 
entries from $\vec{v}_{ijk}$ yield equations of the form $x_{il}+x_{jl}+x_{kl} =0$ 
for $1\leq l\leq k-1$, 
$x_{ik}+x_{jk}=0$  for $1\leq k<j<i\leq n$, 
and $x_{st} =0$  for  $\{s,t\}\not\subset \{i,j,k,l\}$ and $1\leq l\leq k-1$, 
while the quadratic entries of $\vec{w}_{ijk}$ yield equations of the form 
$x_{ks}=0$ for $1\leq s\leq k-1$.

Clearly,  $L_{ijk}$ is a subspace of the linear space $L_{i,j,j-1}=L_{ij}$ 
defined by equations \eqref{eq:L_ij}. By Lemma \ref{lem:bound lemma},
we have that $\cR_1(P\Sigma_n^+)\subseteq L$, and this establishes 
the claim that equality \eqref{eq:r1ps} holds.

Finally, it is also clear that each linear subspace $L_{ij}$ ($2\le j<i\le n$) is an irreducible 
variety, and no $L_{ij}$ is properly included is some distinct $L_{kl}$. This  
shows that \eqref{eq:r1ps} is indeed the irreducible decomposition of 
$\cR_1(P\Sigma_n^+)$, thereby completing the proof.  
\end{proof}

\subsection{Isotropicity}
\label{subsec:isotrope}

The next theorem lists some of the basic properties of the 
(first) resonance varieties of the upper McCool groups.   

\begin{theorem}
\label{thm:resonance}
Let $L_{ij}$ ($2\leq j<i\leq n$) be the irreducible components of $\cR_1(P\Sigma_n^+)$, 
the first resonance variety of the upper McCool group $P\Sigma_n^+$.  Then:
 \begin{enumerate}
  \item  \label{rc1}
Each $L_{ij}$ is a linear subspace of dimension $j$, 
 with basis $\{u_{jl}-u_{il}, u_{ij} \mid 1\leq l\leq j-1\}$.
 \item  \label{rc2}
 $L_{ij}\cap L_{st}=\{0\}$ if $(i,j)\neq (s,t)$.
 \item \label{rc3}
The subspace $L_{ij}$ is $0$-isotropic for $j=2$ and $\binom{j-1}{2}$-isotropic $j\geq 3$. 
 \item \label{rc4}
 $\cR_1(P\Sigma_n^+)=\cR_1(P\Sigma_{n+1}^+)\cap H^1(P\Sigma_n^+;\C)$.
 \end{enumerate}
\end{theorem}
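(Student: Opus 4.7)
The plan is to deduce all four parts from the explicit description of the components $L_{ij}$ given in Theorem \ref{thm:Resonance}, together with the cohomology ring relations $u_{ij}(u_{ik}-u_{jk})=0$ from Theorem \ref{thm:Cohen-P-V-Wu}.

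Parts (1), (2), and (4) are essentially linear-algebraic. For part (1), solving \eqref{eq:L_ij} leaves $x_{i,1},\dots,x_{i,j-1}$ and $x_{ij}$ as free parameters (with $x_{j,l}=-x_{i,l}$), which dualizes to the basis $\{u_{jl}-u_{il},\, u_{ij}\mid 1\leq l\leq j-1\}$. For part (2), any element of $L_{ij}$ has support within $\{u_{jl}, u_{il} : l<j\}\cup\{u_{ij}\}$. If $\{i,j\}\cap\{s,t\}=\emptyset$, the only possibly common basis vectors in the two supports are $u_{ij}$ and $u_{st}$, with orthogonal contributions from the two spaces, forcing triviality. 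If the pairs share exactly one index, say $i=s$ with $j<t$, then comparing the coefficient of the ``unshared'' generator $u_{t,l}$ ($l<t$) forces the slaved coordinates in $L_{it}$ to vanish, and then also the $u_{it}$ coordinate; the remaining configurations ($i=t$, $j=s$, or $j=t$) are handled analogously. Part (4) is immediate from the form of the $L_{ij}$: under the standard inclusion $H^1(P\Sigma_n^+;\C)\hookrightarrow H^1(P\Sigma_{n+1}^+;\C)$ sending $u_{ab}\mapsto u_{ab}$ for $b<a\leq n$, each $L_{ij}$ with $i\leq n$ already sits inside $H^1(P\Sigma_n^+;\C)$, whereas each $L_{n+1,j}$ meets the hyperplane $\{x_{n+1,l}=0\text{ for all }l\}$ only at $0$, because $x_{j,l}+x_{n+1,l}=0$ together with $x_{n+1,l}=0$ forces $x_{j,l}=0$, and likewise $x_{n+1,j}=0$.

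The substantive content is in part (3). Set $v_l := u_{jl}-u_{il}$ for $1\leq l\leq j-1$ and $v_j := u_{ij}$. Each product $v_j v_l = -u_{ij}(u_{il}-u_{jl})$ vanishes by the very defining relation of $A$, so the cup product on $L_{ij}$ is controlled by the $\binom{j-1}{2}$ products $v_l v_m$ with $1\leq l<m\leq j-1$. Expanding,
\[
v_l v_m = u_{jl}u_{jm}-u_{jl}u_{im}-u_{il}u_{jm}+u_{il}u_{im},
\]
I would argue that $u_{jl}u_{im}$ is a nonzero basis element of $A^2$: every defining relation $u_{ab}u_{ac}=u_{ab}u_{bc}$ of $A$ involves two generators sharing a first index, so neither side ever equals $u_{jl}u_{im}$, whose first indices $j,i$ are distinct and whose second indices $l,m$ do not match either first index. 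Since the basis element $u_{jl}u_{im}$ determines the pair $(l,m)$, the coefficient of $u_{jl}u_{im}$ in $v_{l'}v_{m'}$ is $-\delta_{l,l'}\delta_{m,m'}$, proving that the $\binom{j-1}{2}$ products are linearly independent in $A^2$. Combined with $v_j v_l=0$, this gives rank $\binom{j-1}{2}$ for the restricted cup product; for $j=2$ the pair range is empty and the rank is $0=\binom{1}{2}$, matching the $0$-isotropic statement.

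The one genuine obstacle is the linear-independence step in part (3), which I would resolve by the careful verification sketched above that no defining relation of $A$ identifies $u_{jl}u_{im}$ with any other product in $A^2$; everything else reduces to elementary analysis of the equations \eqref{eq:L_ij}.
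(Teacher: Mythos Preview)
Your proof is correct and follows essentially the same approach as the paper's. For parts (1), (2), and (4) you give the same linear-algebraic reasoning, just with more of the casework spelled out; for part (3), the paper simply asserts that $(u_{jl}-u_{il})(u_{jk}-u_{ik})\neq 0$ and that these products form a basis of the image, whereas you supply an explicit justification by isolating the monomial $u_{jl}u_{im}$ as a coordinate untouched by any defining relation $u_{ab}u_{ac}=u_{ab}u_{bc}$. That extra step is a genuine improvement in rigor over the paper's terse treatment, but the underlying strategy is identical.
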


\begin{proof}
\eqref{rc1} It follows from \eqref{eq:aformula} that $L_{ij}$ is the linear 
subspace of  $\C^{\binom{n}{2}}$ with the specified basis.

\eqref{rc2} 
Using the defining equations \eqref{eq:L_ij} for the subspaces  $L_{ij}$ and $L_{st}$, 
it is readily seen that these two subspaces intersect only at $\{0\}$.
 
\eqref{rc3} 
Consider a subspace $L_{ij}$ as in \eqref{rc1}. 
From Theorem \ref{thm:Cohen-P-V-Wu}, we know that $(u_{jl}-u_{il})u_{ij}=0$ and
$(u_{jl}-u_{il})(u_{jk}-u_{ik})\neq 0$ for $1\leq l< k\leq j-1$.
If $j=2$, the subspace $L_{i2}$ has basis $\{u_{21}-u_{i1}, u_{i2}\}$; hence, 
it is $0$-isotropic. 
If $j\geq 3$, the image of the cup product map $L_{ij}\wedge L_{ij}\to H^2(P\Sigma_n^+;\C)$ 
is a linear subspace with basis 
$\{(u_{jl}-u_{il})(u_{jk}-u_{ik})  \mid1\leq l< k\leq j-1 \}$.
Hence, $L_{ij}$ is $\binom{j-1}{2}$-isotropic.

\eqref{rc4}  By Theorem \ref{thm:Cohen-P-V-Wu}, we can construct a basis for 
$H^1(P\Sigma_{n+1}^+;\C)$ by taking the union of a basis of 
$H^1(P\Sigma_n^+;\C)$ with the set $\{u_{n+1,1}, \dots, u_{n+1,n}\}$.
By Theorem \ref{thm:Resonance}, we have that 
\[
\cR_1(P\Sigma_n^+)=\bigcup\limits_{2\leq j<i\leq n} L_{ij}
\:\text{ and }\: \cR_1(P\Sigma_{n+1}^+)=\bigcup\limits_{2\leq j<i\leq n+1} V_{ij},
\]
where $L_{ij}=V_{ij}\cap H^1(P\Sigma_n^+;\C)$ for $2\leq j<i\leq n$, and
$V_{n+1,j}\cap H^1(P\Sigma_n^+;\C)=\{0\}$ for $2\leq j \leq n$.  
The claim follows.
\end{proof}

\subsection{Split monomorphisms}
\label{subsec:split}
For each $n\ge 1$, there is a split injection 
$P\Sigma_n^+\rightarrow P\Sigma_{n+1}^+$.
Furthermore, the inclusion  $\iota\colon P\Sigma_n^+\inj P\Sigma_n$ is a
split monomorphism for $n=3$.  However, using the first resonance varieties, 
we can rule out the existence of a splitting homomorphism for $\iota$ when 
$n\ge 4$.   We start by recalling a result of Cohen \cite{DCohen09} and 
Cohen--Schenck \cite{Cohen-Schenck15}, based on the computation 
of the cohomology ring of $P\Sigma_n$ by Jensen--McCammond--Meier 
from \cite{Jensen-McCammond-Meier06}.

\begin{theorem}[\cite{DCohen09, Cohen-Schenck15}]
\label{thm:res mccool} 
For each $n\ge 2$, the first resonance variety of the  group $P\Sigma_n$
decomposes into irreducible components as
\[
 \cR_1(P\Sigma_n)=\bigcup_{1\leq i<j\leq n} C_{ij} \cup \bigcup_{1\leq i<j<k\leq n} C_{ijk}, 
\]
 where $C_{ij}$ is the plane defined by the equations $x_{pq}=0$ for $\{p,q\}\neq \{i,j\}$ and 
 $C_{ijk}$ is the $3$-dimensional linear subspace defined by the equations 
 $x_{ij}+x_{kj}=x_{ji}+x_{ki}=x_{ik}+x_{jk}=0$  
 and $x_{st}=0$ for $\{s,t\}\nsubseteq \{i,j,k\}$.  
 Furthermore, all these components are isotropic.
\end{theorem}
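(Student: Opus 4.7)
The proof strategy parallels the treatment of $P\Sigma_n^+$ carried out in this paper, but takes as input the presentation of $H^*(P\Sigma_n;\Z)$ due to Jensen--McCammond--Meier, rather than Theorem~\ref{thm:Cohen-P-V-Wu}. The plan is to: (i) exhibit each $C_{ij}$ and $C_{ijk}$ as a subset of $\cR_1(P\Sigma_n)$ by a direct cup-product calculation; (ii) rule out any further components by analyzing the annihilator of the infinitesimal Alexander invariant $\fB(P\Sigma_n)$ via Lemma~\ref{lem:bound lemma} applied to a suitable presentation; and (iii) verify isotropy of each component.

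For the forward inclusion, fix indices. Given $(i,j)$, any nonzero $a=a_{ij}u_{ij}$ in the candidate plane $C_{ij}$ satisfies $a\cdot u_{ji}=0$ (using $u_{ij}u_{ji}=0$ from the Brownstein--Lee relations), with $u_{ji}\notin \C\cdot a$, so $a\in\cR_1$. For the three-dimensional candidate $C_{ijk}$, a generic element has the shape $a=\alpha(u_{ij}-u_{kj})+\beta(u_{ji}-u_{ki})+\gamma(u_{ik}-u_{jk})$ subject to the defining linear equations, and one checks by the Jensen--McCammond--Meier relations that $a\cdot b=0$ for every $b$ in a $2$-dimensional subspace of $C_{ijk}$ complementary to $\C\cdot a$; hence $C_{ijk}\subset \cR_1$.

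For the reverse inclusion, I would proceed exactly as in the proof of Theorem~\ref{thm:Resonance}. Starting from Lemma~\ref{lem:presentationAlex} applied to $A=H^*(P\Sigma_n;\C)$, one obtains a presentation $\Phi^*\colon (E^3)^*\otimes S\to (I^2)^*\otimes S\twoheadrightarrow \fB(P\Sigma_n)$, where now $I^2$ is spanned by the Jensen--McCammond--Meier relators. A reduction analogous to Lemma~\ref{lem:basisIJK} produces a block triangular matrix $\Psi$ whose diagonal blocks correspond, one per quadratic relator, to the linear forms cutting out either some $C_{ij}$ or some $C_{ijk}$. Applying Lemma~\ref{lem:bound lemma}\eqref{it2} gives the inclusion $\cR_1(P\Sigma_n)\subseteq\bigcup C_{ij}\cup\bigcup C_{ijk}$, and irreducibility plus the obvious non-containments between distinct components give the stated primary decomposition.

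Isotropy of each component is a short check: inside $C_{ij}$, the space is $2$-dimensional and contains only multiples of $u_{ij}$ and $u_{ji}$, whose product vanishes; inside $C_{ijk}$, one uses the identities $(u_{ij}-u_{kj})(u_{ji}-u_{ki})=0$ and its cyclic analogues, which are immediate consequences of the Brownstein--Lee relations together with graded-commutativity. The main obstacle is the combinatorial bookkeeping at step (iii) of the reverse-inclusion argument: unlike the $P\Sigma_n^+$ case where the relators $r_{ijk}^*$ are indexed by triples $k<j<i$, the relators for $P\Sigma_n$ involve all cyclic orderings and one must track which $\Phi^*$-image provides the cutting equations for $C_{ij}$ versus $C_{ijk}$. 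Once this accounting is handled, however, the argument is entirely parallel to Section~\ref{sec:resonance}, and the isotropy (in contrast to the non-isotropy of the higher-dimensional components of $\cR_1(P\Sigma_n^+)$) drops out by inspection.
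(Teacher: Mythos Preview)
The paper does not prove this theorem: it is quoted from \cite{DCohen09} (for the decomposition into irreducible components) and \cite{Cohen-Schenck15} (for isotropy), and is invoked only as input for Proposition~\ref{prop:split} and Remark~\ref{rem:mcqp}. There is therefore no proof in the paper to compare your proposal against.

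That said, your sketch is a reasonable outline, modeled on the argument for $P\Sigma_n^+$ in Section~\ref{sec:resonance}. Two comments on its viability. First, the forward inclusion for $C_{ij}$ is slightly misstated: a general element of the plane $C_{ij}$ has the form $a=a_{ij}u_{ij}+a_{ji}u_{ji}$, not merely a multiple of $u_{ij}$; but since $u_{ij}u_{ji}=0$ the whole plane is isotropic and the argument is easily repaired. Second, and more substantively, the reverse inclusion via Lemma~\ref{lem:bound lemma} is where all the work lies, and you have only asserted it: one must actually produce a block-triangular presentation of $\fB(P\Sigma_n)$ whose diagonal blocks cut out precisely the $C_{ij}$ and $C_{ijk}$. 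This is a nontrivial combinatorial computation---the relator set for $P\Sigma_n$ is indexed by ordered pairs and unordered triples rather than by the triples $k<j<i$ used for $P\Sigma_n^+$, and there are roughly twice as many variables---so ``a reduction analogous to Lemma~\ref{lem:basisIJK}'' hides the entire difficulty. Cohen's original argument in \cite{DCohen09} proceeds instead by a direct analysis of the rank of the Aomoto differential, so your proposed route, while plausible, is not the one in the cited sources either.
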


We can now answer a question raised by Paolo Bellingeri. 

\begin{prop}
\label{prop:split}
There is no epimorphism from $P\Sigma_n$ to $P\Sigma^+_n$ for $n\geq 4$. In particular,
the inclusion $\iota\colon P\Sigma_n^+\to P\Sigma_n$ admits no splitting for $n\geq 4$. 
\end{prop}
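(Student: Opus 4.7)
The plan is to derive a contradiction from any hypothetical surjection $\alpha\colon P\Sigma_n\twoheadrightarrow P\Sigma_n^+$ with $n\ge 4$, by playing off the structure of the resonance varieties computed earlier against the constraint imposed by Lemma~\ref{lem:PS06}. Since any group surjection induces an injection on $H^1(-;\C)$, the pullback $\alpha^{\ast}\colon H^1(P\Sigma_n^+;\C)\hookrightarrow H^1(P\Sigma_n;\C)$ is injective, and by Lemma~\ref{lem:PS06}, $\alpha^{\ast}\bigl(\cR_1(P\Sigma_n^+)\bigr)\subseteq\cR_1(P\Sigma_n)$. Consequently, for each irreducible component $L_{ij}$ of $\cR_1(P\Sigma_n^+)$ (Theorem~\ref{thm:Resonance}), the image $\alpha^{\ast}(L_{ij})$ is a linear, and hence irreducible, subvariety of $\cR_1(P\Sigma_n)$ of dimension $j$, so it must lie inside a single irreducible component of the target.

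A quick finish is available when $n\ge 5$: by Theorem~\ref{thm:res mccool}, every irreducible component of $\cR_1(P\Sigma_n)$ has dimension at most $3$, whereas $L_{n,n-1}$ has dimension $n-1\ge 4$, a contradiction. This disposes of all cases except $n=4$, where $L_{4,3}$ has dimension exactly $3$ and could a priori coincide with some $C_{ijk}$, so the bare dimension count is no longer decisive.

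The main obstacle is therefore the case $n=4$, and the intended resolution (which in fact handles $n\ge 4$ uniformly) is to replace the dimension argument by an isotropy argument. By Theorem~\ref{thm:resonance}(\ref{rc3}), $L_{4,3}$ is $1$-isotropic, so there exist $a,b\in L_{4,3}$ with $a\cup b\neq 0$ in $H^2(P\Sigma_4^+;\C)$, whereas by Theorem~\ref{thm:res mccool} every component of $\cR_1(P\Sigma_4)$ is $0$-isotropic. Since $\alpha^{\ast}$ is a ring homomorphism, $\alpha^{\ast}(a)\cup\alpha^{\ast}(b)=\alpha^{\ast}(a\cup b)$, and $\alpha^{\ast}(L_{4,3})$ being contained in a $0$-isotropic component forces this cup product to vanish. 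The real work lies in promoting this vanishing into a genuine contradiction, and here my plan is to invoke the $1$-formality of both groups (Theorem~\ref{thm:McCoolFormal}) together with the Dimca--Papadima--Suciu structure theorem for resonance varieties of $1$-formal groups, under which a positive-isotropy component of $\cR_1$ arises from an admissible map to a $1$-dimensional K\"ahler orbifold of positive genus. Such a map from $P\Sigma_n^+$ would compose with $\alpha$ to produce a positive-isotropy component of $\cR_1(P\Sigma_n)$, in contradiction with Theorem~\ref{thm:res mccool}. Verifying that the admissible-maps dictionary of Dimca--Papadima--Suciu behaves covariantly under group epimorphisms in the precise way required is the step demanding the most care.
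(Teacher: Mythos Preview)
For $n\ge 5$ your dimension argument is exactly what the paper does, and your setup via Lemma~\ref{lem:PS06} matches the paper's. The divergence, and the gap, is in how you propose to close the $n=4$ case.

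Your instinct is right that the step ``$\alpha^*(a)\cup\alpha^*(b)=0$ in $H^2(P\Sigma_4;\C)$ while $a\cup b\ne 0$ in $H^2(P\Sigma_4^+;\C)$'' needs care, since injectivity of $\alpha^*$ on $H^1$ does not by itself yield injectivity on $H^2$. But your proposed resolution via the Dimca--Papadima--Suciu admissible-maps dictionary is the wrong tool. That dictionary---positive-isotropy resonance components arising from pencils onto orbifold curves---is a theorem about \emph{quasi-projective} (or quasi-K\"ahler) $1$-formal groups; this is precisely the content behind Theorem~\ref{thm:quasiproj} and its sources \cite{Dimca-Papadima-Suciu08, Dimca-Papadima-Suciu09}, and it genuinely uses the ambient algebraic geometry. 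By Proposition~\ref{prop:quasiproj}, however, $P\Sigma_4^+$ is \emph{not} quasi-projective; indeed, the existence of the $3$-dimensional, $1$-isotropic component $L_{43}$ is exactly what obstructs quasi-projectivity. So there is no admissible map to a curve available on the $P\Sigma_4^+$ side, and the ``covariant behaviour under epimorphisms'' you want to check has nothing to act on.

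The paper handles $n=4$ more directly: it asserts that $\sigma^*(a\cup b)=\sigma^*(a)\cup\sigma^*(b)\ne 0$ ``by the injectivity of $\sigma^*$'', so $\sigma^*(L_{43})$ cannot sit inside a $0$-isotropic component of $\cR_1(P\Sigma_4)$, contradicting Theorem~\ref{thm:res mccool}. If you wish to scrutinize that step, the issue to address is the injectivity of $\sigma^*$ on the relevant piece of $H^2$, not an appeal to orbifold pencils that are simply not present for $P\Sigma_n^+$.
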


\begin{proof}
Suppose  $\sigma\colon P\Sigma_n \surj P\Sigma_n^+$ is an epimorphism. 
By Lemma \ref{lem:PS06}, the epimorphism $\sigma$ induces a monomorphism 
$\sigma^{\ast}\colon H^1(P\Sigma_n^+;\C)\inj H^1(P\Sigma_n;\C)$ 
which takes $\cR_1(P\Sigma_n^+)$ to $\cR_1(P\Sigma_n)$.

Now, we know from Theorem \ref{thm:res mccool} that   
$\cR_1(P\Sigma_n)$ is a union of linear spaces of dimension $2$ or $3$. 
On the other hand, Theorem \ref{thm:resonance} insures that 
$\cR_1(P\Sigma_n^+)$ has irreducible components  
which are linear spaces of dimension $n-1$. Hence, for $n\geq 5$, 
there is no epimorphism from $P\Sigma_n$ to $P\Sigma_n^+$.

For $n=4$, Theorem \ref{thm:resonance} also tells us that the 
irreducible component $L_{43}\subset \cR_1(P\Sigma_4^+)$ 
is not isotropic. For any $a,b \in L_{43}$ such that $a\cup b\neq 0$,
we have that $\sigma^*(a)\cup \sigma^*(b)=\sigma^*(a\cup b)\neq 0$, 
by the injectivity of $\sigma^*$.  Hence, $\sigma^*$ 
must take the non-isotropic component $L_{43}\subset \cR_1(P\Sigma_4^+)$ 
to a non-isotropic component of $\cR_1(P\Sigma_4)$. 
 However,  all 
irreducible components of $\cR_1(P\Sigma_4)$ are isotropic subspaces. 
This is a contradiction, and so we are done.
\end{proof}

\begin{remark} 
\label{rem:split3}
The canonical inclusion $\iota\colon P\Sigma_3^+\inj P\Sigma_3$ 
\emph{does}\/ admit a splitting, for instance, the homomorphism $\sigma\colon P\Sigma_3\inj P\Sigma_3^+$
defined by sending $\{\alpha_{21},\alpha_{31},\alpha_{32}, \alpha_{12},\alpha_{13},\alpha_{23}\}$
to $\{\alpha_{21},\alpha_{31},\alpha_{32}, \alpha_{32}^{-1}$, $\alpha_{21}^{-1},\alpha_{31}^{-1}\}$, 
respectively.  The induced homomorphism in first cohomology, 
$\sigma^*\colon H^1(P\Sigma_3^+,\Z) \to H^1(P\Sigma_3,\Z)$,  
sends $\{u_{21}, u_{31}, u_{32}\}$ to $\{u_{21}-u_{13}, u_{31}-u_{23}, u_{32}-u_{12}\}$, 
respectively; consequently, $\sigma^*$ takes $\cR_1(P\Sigma_3^+)$ to the linear subspace 
$C_{123}\subset \cR_1(P\Sigma_3)$.
\end{remark}

\subsection{Quasi-projectivity}
\label{subsec:qp}
A finitely presented group $G$ is said to be a \emph{quasi-projective group}\/ 
if it can be realized as $G=\pi_1(M)$, where $M$ is a smooth, connected, 
complex quasi-projective variety.   In 1958, J.-P. Serre asked the following 
question:  Which finitely presented groups are quasi-projective?
Combining Theorem B from \cite{Dimca-Papadima-Suciu09} with Theorem 4.2
from \cite{Dimca-Papadima-Suciu08}, we have the following obstruction for 
quasi-projectivity of a $1$-formal group.

\begin{theorem}
\label{thm:quasiproj}
Let $G$ be a quasi-projective, $1$-formal group.   Then each positive-dimensional 
irreducible component of the first resonance variety $\cR_1(G)$ is a linear subspace 
of $H^1(G;\C)$ which is either $0$-isotropic and of dimension at least $2$, or $1$-isotropic 
and of dimension of at least $4$.
\end{theorem}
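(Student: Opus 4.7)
The plan is to combine two structural theorems on cohomology jump loci, one algebraic-topological and one algebro-geometric. First, I would invoke the Tangent Cone Theorem (Theorem~B of \cite{Dimca-Papadima-Suciu09}): since $G$ is $1$-formal, the resonance variety $\cR_1(G)$ coincides with the tangent cone at the identity of the characteristic variety $\cV_1(G) \subset H^1(G;\C^*)$, and it is a finite union of rationally defined linear subspaces of $H^1(G;\C)$. In particular, each positive-dimensional irreducible component $L \subseteq \cR_1(G)$ is the tangent space at $1$ of some positive-dimensional irreducible component $W$ of $\cV_1(G)$ passing through the trivial character. This linearizes the problem and reduces the theorem to analyzing the individual components $L$.

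Next, I would apply Arapura's structure theorem for characteristic varieties of smooth quasi-projective varieties, in the form refined by Theorem~4.2 of \cite{Dimca-Papadima-Suciu08}. Fix a smooth, connected, complex quasi-projective variety $M$ with $\pi_1(M) = G$. Every positive-dimensional irreducible component $W$ of $\cV_1(G)$ containing the identity is a subtorus obtained as the pullback $f^* H^1(C;\C^*)$ along a surjective holomorphic map $f \colon M \to C$ with connected generic fibre, where $C$ is a smooth complex $1$-dimensional orbifold of negative Euler characteristic. Passing to tangent cones at $1$ gives $L = f^* H^1(C;\C)$, so the isotropicity and dimension of $L$ are completely determined by the cup-product structure and first Betti number of $C$.

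The final step is to read off the dichotomy from the topology of $C$. If $C$ is non-compact, the cup product on $H^1(C;\C)$ vanishes, so $L$ is $0$-isotropic; the condition $\chi(C) < 0$ forces $b_1(C) \geq 2$, whence $\dim L \geq 2$. If $C$ is compact of genus $g$, the cup product on $H^1(C;\C)$ has rank one, so $L$ is $1$-isotropic; here $\chi(C) < 0$ permits either $g = 1$ with nontrivial orbifold structure or $g \geq 2$, and one must exclude the elliptic-orbifold case to obtain the lower bound $\dim L \geq 4$. I expect this elliptic exclusion to be the main obstacle, since it is the only place where $1$-formality enters in an essential way: the Hodge-theoretic arguments of \cite{Dimca-Papadima-Suciu08} show that the Malcev Lie algebra of a $1$-formal quasi-projective group cannot support an elliptic orbifold pencil producing a two-dimensional $1$-isotropic component, because the weight filtration on the generators is incompatible with the degree completion of the holonomy Lie algebra in that situation. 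Once this exclusion is granted, the remaining compact case $g \geq 2$ yields $\dim L = 2g \geq 4$, completing the dichotomy.
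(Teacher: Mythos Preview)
The paper does not give its own proof of this theorem; it simply records the statement as a consequence of Theorem~B in \cite{Dimca-Papadima-Suciu09} and Theorem~4.2 in \cite{Dimca-Papadima-Suciu08}. Your proposal invokes exactly these two inputs, so at the level of strategy you are aligned with the paper.

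That said, your elaboration contains a misstep in the third paragraph. In Arapura's theorem, the positive-dimensional components of $\cV_1(M)$ \emph{passing through the identity} are of the form $f^*H^1(C;\C^*)$ for an admissible map $f\colon M\to C$ onto a smooth curve $C$ with ordinary Euler characteristic $\chi(C)<0$; the orbifold structure (multiple fibres) is what produces the \emph{translated} components $\rho\cdot f^*H^1(C;\C^*)$ with $\rho\neq 1$, and those do not contribute to the tangent cone at $1$. Consequently, if $C$ is compact then $\chi(C)=2-2g<0$ already forces $g\ge 2$, and no elliptic-orbifold case ever arises among the components through the origin. Your proposed ``elliptic exclusion'' step, together with the speculative appeal to weight filtrations and the Malcev Lie algebra, is therefore unnecessary, and the reasoning you sketch for it is not how the cited references proceed. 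The role of $1$-formality is solely to guarantee $\cR_1(G)=TC_1(\cV_1(G))$ via the Tangent Cone theorem; once that is in hand, the dichotomy is read off directly from the genus of the (ordinary) target curve, with no further use of $1$-formality.
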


For instance, the pure braid groups $P_n$ are both quasi-projective and $1$-formal, 
and all the components of $\cR_1(P_n)$ are $0$-isotropic, $2$-dimensional subspaces, 
for $n\ge 3$.   On the other hand, as an application of this theorem and our own results, 
we obtain the following corollary.

\begin{prop}
\label{prop:quasiproj}
For each $n\geq 4$, the upper McCool groups $P\Sigma_n^+$ 
is not quasi-projective.
\end{prop}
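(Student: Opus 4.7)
The plan is to derive a contradiction with Theorem \ref{thm:quasiproj} by exhibiting an irreducible component of $\cR_1(P\Sigma_n^+)$ that violates the dichotomy ``$0$-isotropic of dimension $\ge 2$, or $1$-isotropic of dimension $\ge 4$.'' Since Theorem \ref{thm:McCoolFormal} guarantees that $P\Sigma_n^+$ is $1$-formal, the hypothesis of Theorem \ref{thm:quasiproj} is available to us, so any obstruction found at the level of $\cR_1$ will indeed rule out quasi-projectivity.

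Concretely, for $n\ge 4$, we have the indices $2\le j=3<i=4\le n$, so by Theorem \ref{thm:resonance}\eqref{rc1} the component $L_{43}$ is a genuine irreducible component of $\cR_1(P\Sigma_n^+)$, of dimension $j=3$. By Theorem \ref{thm:resonance}\eqref{rc3}, since $j=3\ge 3$, this component is $\binom{j-1}{2}=\binom{2}{2}=1$-isotropic. Thus $L_{43}$ is a positive-dimensional linear component of $\cR_1(P\Sigma_n^+)$ that is $1$-isotropic but has dimension only $3$, and in particular is neither $0$-isotropic nor $1$-isotropic of dimension $\ge 4$. This directly contradicts the conclusion of Theorem \ref{thm:quasiproj}, so $P\Sigma_n^+$ cannot be quasi-projective.

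There is essentially no obstacle in this argument, as all the heavy lifting has already been done: the $1$-formality statement is cited from \cite{Berceanu-Papadima09}, the obstruction for quasi-projective $1$-formal groups is cited from \cite{Dimca-Papadima-Suciu08, Dimca-Papadima-Suciu09}, and the precise isotropicity and dimension count of the component $L_{43}$ is part of Theorem \ref{thm:resonance}, which rests on the Gr\"obner basis computation of Section \ref{sec:grobnerbasis}. The only point worth emphasizing is that any $L_{ij}$ with $j\ge 3$ works equally well as a witness: for such components the isotropicity index $\binom{j-1}{2}$ grows quadratically in $j$ while the dimension grows only linearly, so none of them fits the $0$-isotropic or $1$-isotropic-in-dimension-$\ge 4$ pattern; choosing $L_{43}$ simply gives the smallest such witness available for every $n\ge 4$.
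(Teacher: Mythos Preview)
Your proof is correct and follows essentially the same approach as the paper: invoke $1$-formality from Theorem~\ref{thm:McCoolFormal}, exhibit the component $L_{43}$ as a $3$-dimensional, $1$-isotropic subspace via Theorem~\ref{thm:resonance}, and conclude by the obstruction in Theorem~\ref{thm:quasiproj}. Your added remark that any $L_{ij}$ with $j\ge 3$ would serve equally well is a valid elaboration, though not needed for the argument.
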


\begin{proof}
Since  $n\geq 4$, Theorem \ref{thm:resonance} implies that 
$\cR_1(P\Sigma_n^+)$ contains a component $L_{43}$ which 
is a $3$-dimensional, $1$-isotropic linear subspace of $H^1 (P\Sigma_n^+)$.  
On the other hand, by Theorem \ref{thm:McCoolFormal}, all the upper 
McCool groups $P\Sigma_n^+$ are $1$-formal.
Hence, by Theorem \ref{thm:quasiproj}, the group $P\Sigma_n^+$ is 
not quasi-projective.
\end{proof}

\begin{remark}
\label{rem:mcqp}
The same yoga as in the previous corollary cannot be applied to the full
McCool groups $P\Sigma_n$, since, as we saw in Theorem \ref{thm:res mccool},  
the components of $\cR_1(P\Sigma_n)$ are isotropic and of dimension $2$ and $3$.  
To the best of the authors' knowledge, it is unknown whether or not the groups $P\Sigma_n$ are 
quasi-projective for $n\ge 3$.
\end{remark}

\begin{remark}
\label{rem:pdiff}
Comparing the resonance varieties of $P\Sigma_n^+$ with those of $P_n$ 
and $\Pi_n$ yields  another proof of Corollary  \ref{cor:chen ppp}.  Indeed, 
for $n\ge 4$, all irreducible components of $\cR_1(P_n)$ and $\cR_1(\Pi_n)$ 
are  isotropic linear subspaces (of dimension $2$, respectively, $2,\dots, n-1$), 
whereas  $\cR_1(P\Sigma^+_n)$ has non-isotropic components.
\end{remark}

\section{The scheme structure of $\cR_1(P\Sigma_n^+)$}
\label{sec:scheme}

In this last section we determine the scheme structure defined by the 
ideal $\Ann(\fB(P\Sigma_n^+))$ on the 
resonance variety $\cR_1(P\Sigma_n^+)$.

\subsection{Two $S$-modules and their Hilbert series} 
\label{subsec:hsmod}
We start with some preparation. 
Let $\x=\{x_{ij}\mid 1\leq j<i\leq n\}$ be the dual of the standard basis 
of $H^1(P\Sigma_n^+;\C)$, and let $S=\C[\x]$ be the polynomial ring 
in those variables. Recall from Proposition \ref{prop:reducedPres} that the infinitesimal 
Alexander invariant $\fB_n=\fB(P\Sigma_n^+)$ has a presentation given by 
$\fB_n=S^{\binom{n}{3}}/ \im(\Psi)$, where $\im(\Psi)$ is the submodule of 
$S^{\binom{n}{3}}$ generated by the set $\cB=\bigcup_{1\leq k<j<i\leq n} \cB_{ijk}$ 
from Lemma \ref{lem:basisIJK}. 

Let $\fB_n^{\prime}$ be the quotient of  the $S$-module 
$\fB_n$ by the submodule generated by the set of monomials 
$\cE_{ijk}:=\{\ff:= x_{kp}\cdot r_{ijk} \mid  1\le p\leq  k-1 \}$.
Then $\fB_n^{\prime}$ has a presentation 
\begin{equation}
\label{eq:fbnprime}
\fB_n^{\prime}=S^{\binom{n}{3}}/ \fI,
\end{equation}
where $\fI$ is the submodule of $S^{\binom{n}{3}}$ generated by
the set $\cB^{\prime}=\bigcup_{1\leq k<j<i\leq n} \cB^{\prime}_{ijk}$ and 
\begin{equation}
\label{eq:cbnprime}
\cB^{\prime}_{ijk}=\cB_{ijk}\cup \cE_{ijk}.
\end{equation}

\begin{prop}
\label{prop:bprime-gb}
The set $\cB^{\prime}$ forms a Gr\"{o}bner basis for the submodule $\fI$.
\end{prop}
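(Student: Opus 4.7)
The plan is to verify Buchberger's criterion for $\cB'$. Since $\cB$ generates $\im(\Psi)$ by Lemma \ref{lem:basisIJK}, the union $\cB' = \cB \cup \cE$ generates $\fI = \im(\Psi) + \langle \cE \rangle$ by construction; it thus remains to show that every $\fS$-polynomial of pairs in $\cB'$ reduces to zero modulo $\cB'$.

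The decisive observation is that the quadratic ``closing'' elements in $\cD_{ijk}$ that were needed to turn $\cB_{ijk}$ into the Gr\"obner basis $\cG_{ijk}$ in Theorem \ref{thm:GrobnerBasis} are already $S$-multiples of elements of $\cE_{ijk}$: writing $\ff_p := x_{kp}\cdot r_{ijk}$ for a typical generator of $\cE_{ijk}$, one has $\bh_9 = x_{kl}\cdot \ff_p$ and $\bh_0 = x_{jq}\cdot \ff_p$, so $\cD_{ijk} \subset \langle \cE_{ijk}\rangle$. In particular, the full Gr\"obner basis $\cG$ lies in the submodule generated by $\cB'$, and any reduction step previously performed using a $\cD$-element may be rewritten as a reduction using the corresponding $\cE$-element.

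The $\fS$-polynomials of pairs in $\cB'$ fall into three types. \emph{Within $\cB$}: these were verified to reduce to zero modulo $\cG$ in the proof of Theorem \ref{thm:GrobnerBasis} (Appendix \ref{sec:Appendix}), and by the observation above the same reductions go through modulo $\cB'$. \emph{Within $\cE$}: both elements are monomials sharing the same basis vector, so $\fS(\ff_p,\ff_q) = 0$ identically. \emph{Mixed pairs}: only pairs whose leading terms share a basis vector of $(I^2)^*\otimes S$ contribute, and a direct case-by-case computation shows that each such $\fS$-polynomial either reduces via one of the linear relations $\bh_1,\dots,\bh_8$ in the appropriate block, or produces a quadratic monomial of the form $\bh_9$ or $\bh_0$, which already lies in $\langle\cE\rangle$ by the key observation.

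The main technical obstacle is the systematic enumeration in the third type, particularly because some of the generators $\bg_m\in\cB_{ijk}$ have leading terms in the $r_{ijl_2}$-block rather than the $r_{ijk}$-block (under the position-over-term ordering), so mixed $\fS$-polynomials can also occur between elements $\ff\in\cE_{ijl_2}$ and elements $\bg_m\in\cB_{ijk}$ from a neighboring block. Because every quadratic remainder produced in these reductions lies in $\langle\cE\rangle$ via the identities $\bh_9 = x_{kl}\cdot \ff_p$ and $\bh_0 = x_{jq}\cdot \ff_p$, the verification is strictly simpler than the original Gr\"obner basis check carried out in Appendix \ref{sec:Appendix}, and ultimately reduces to routine (if somewhat intricate) bookkeeping over the relevant index sets.
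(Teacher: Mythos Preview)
Your approach is essentially the paper's: both invoke Buchberger's criterion, observe that $\cD_{ijk}\subset\langle\cE_{ijk}\rangle$ so that the Appendix~\ref{sec:Appendix} reductions for pairs in $\cB$ transfer verbatim to $\cB'$, and then treat the new mixed $\fS$-polynomials. The paper actually carries out the only nontrivial mixed case explicitly---the four polynomials $\fS(\bg_m,\ff)$ for $m=1,2,3,4$---showing that each one is a multiple of some $\bh_8^{(***)}$ in a neighboring block; you assert this step without performing it. One correction: under the module order of \S\ref{subsec:GBAlexInv}, every $\bg_m\in\cB_{ijk}$ has its leading term in the $r_{ijk}$-block (not the $r_{ijl_2}$-block), so the cross-block complication you anticipate does not in fact arise, and the mixed pairs requiring attention are precisely those between $\ff\in\cE_{ijk}$ and $\bg_m\in\cB_{ijk}$ for the \emph{same} triple $(i,j,k)$.
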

\begin{proof}
Comparing the set $\cD_{ijk}$ from \eqref{eq:dijk} with the set 
$\cE_{ijk}$, we see that each element in $\cD_{ijk}$ is of the form 
$x_{kl}\ff$ or $x_{jq}\ff$, for some $\ff\in \cE_{ijk}$.
In view of step 2 from the proof of Theorem \ref{thm:GrobnerBasis} 
given in Appendix \ref{sec:Appendix}, 
in order to reach the desired conclusion, we only need to 
check the vanishing of the $\fS$-polynomials $\fS(g,f)$ for all
$f\in \cE_{ijk}$ and $g\in \cG_{\bg}$ from \eqref{eq:b2}.  We have:
\begin{align*}
\fS(\bg_1,\ff)&= x_{kp}(-x_{jk}-x_{l_2k})\cdot r_{ijl_2} =-(x_{jk}+x_{l_2k})\bh_{8}^{(ijl_2)},\\
\fS(\bg_2,\ff)&= x_{kp}x_{jk}\cdot r_{ijl_2}=x_{jk} \bh_{8}^{(ijl_2)},\\ 
\fS(\bg_3,\ff)&=  -x_{kp}x_{jk}\cdot r_{il_3j}=-x_{jk}\bh_{8}^{(il_3j)},\\ 
\fS(\bg_4,\ff)&= x_{kp}x_{jk}\cdot r_{l_4ij}=x_{jk}\bh_{8}^{(l_4ij)}, 
\end{align*} 
where $\bh_8^{(***)}\in \cB_{***}$ is the corresponding element from 
Lemma \ref{lem:basisIJK}.  Hence, all these $\fS$-polynomials vanish, 
and we are done.
\end{proof}

Now choose a basis $\{e_{ijkl} \mid 1\leq l<k<j<i \leq n\}$ for the free module $S^{\binom{n}{4}}$, 
and let $J$ be the submodule of $S^{\binom{n}{4}}$ generated by the monomials 
$x_{st}e_{ijkl}$, where $1\leq t<s\leq n$, and  $(s,t)\neq (i,j)$.
We then define an $S$-module 
\begin{equation}
\label{eq:kn}
K_n:=S^{\binom{n}{4}}/J.
\end{equation}
For $3\leq j<i\leq n$, let $K_{ij}=S/I_{ij}$, where $I_{ij}$ is the ideal of $S$ generated by
the variables $x_{st}$  with $1\leq t<s\leq n$ for which $(s,t)\neq (i,j)$. 
The $S$-module $K_n$ can be decomposed as 
\begin{equation}
\label{eq:kn-bis}
K_n\cong \bigoplus_{3\leq j<i\leq n}\bigoplus_{1}^{\binom{j-1}{2}} K_{ij}\, .
\end{equation}
 
\begin{prop}
\label{prop:HilbBnprime}
For each $n\ge 4$, the following equalities hold:
\[ 
\Hilb(K_n,t)=\binom{n}{4}\frac{1}{1-t} \quad \textrm{ and } \quad
\Hilb(\fB_n^{\prime},t)
=\sum\limits_{s=2}^{n-1}\binom{s}{2}\frac{1}{(1-t)^{n-s+1}}\, .
\]
\end{prop}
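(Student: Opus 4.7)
The plan is to compute each Hilbert series by explicit analysis of the relevant presentation. For the first equality, I would argue directly from the decomposition \eqref{eq:kn-bis}: since each summand $K_{ij}=S/I_{ij}$ is isomorphic to the polynomial ring $\C[x_{ij}]$ in a single variable, its Hilbert series is $1/(1-t)$, so $\Hilb(K_n,t)$ equals $1/(1-t)$ times the total number of summands, namely $\sum_{j=3}^{n-1}(n-j)\binom{j-1}{2}$. The combinatorial identity $\sum_{j=3}^{n-1}(n-j)\binom{j-1}{2}=\binom{n}{4}$ can be verified by the bijection sending a $4$-subset $\{a,b,c,d\}\subset\{1,\dots,n\}$ with $a<b<c<d$ to the datum $(i,j)=(d,c)$ together with the $2$-subset $\{a,b\}\subset\{1,\dots,c-1\}$.

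For the second equality, the plan is to exploit Proposition \ref{prop:bprime-gb}: since $\cB'$ is a Gr\"obner basis of $\fI$, one has $\Hilb(\fB_n',t)=\Hilb(S^{\binom{n}{3}}/\ini_\succ(\fI),t)$, and the initial module decomposes as a direct sum over the basis elements $r_{ijk}$ of $(I^2)^*\otimes S$. It then suffices to identify, for each $1\le k<j<i\le n$, the ideal $I'_{ijk}\subset S$ cut out by the coefficients of $r_{ijk}$ in $\ini_\succ(\cB')$, and to show that $S/I'_{ijk}$ is a polynomial ring on exactly $k+1$ variables. Summing over the $\binom{n-k}{2}$ triples with a fixed $k$, and substituting $s=n-k$, then yields the claimed formula.

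The verification of the structure of $I'_{ijk}$ is the central step. Since every element of $\cB'$ is linear in the variables $x_{st}$, the ideal $I'_{ijk}$ is generated by linear forms. A careful bookkeeping of the initial terms contributed to the $r_{ijk}$-component by each family $\bg_1,\dots,\bg_4$ and $\bh_1,\dots,\bh_8$ of Lemma \ref{lem:basisIJK}, together with those from $\cE_{ijk}$ (namely $x_{k1},\dots,x_{k,k-1}$), shows that $I'_{ijk}$ contains every coordinate variable except $x_{ij}$, $x_{jk}$, and $x_{j,p}$ for $1\le p\le k-1$. Consequently $S/I'_{ijk}\cong\C[x_{ij},x_{jk},x_{j,k-1},\dots,x_{j,1}]$, with Hilbert series $1/(1-t)^{k+1}$.

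The main obstacle is this systematic case analysis, in particular the correct identification of the leading terms of $\bg_1,\dots,\bg_4$ under the monomial orders of \eqref{eq:orderx} and \eqref{eq:orderr} (whose leading terms lie in the $r_{ijk}$-component because the convention $r_{lst}\succ r_{ijk}$ when $i>l$ makes $r_{ijk}$ dominate the smaller-indexed companion $r$'s appearing in each $\bg_m$), and the observation that the new linear forms in $\cE_{ijk}$ eliminate precisely the variables $x_{k,p}$ ($1\le p< k$) that in the setting of Theorem \ref{thm:HilbertSeries} contributed the extra $\binom{n}{4}\,t/(1-t)$ correction. Once this is verified, the Hilbert series computation reduces to the elementary summation and substitution described above.
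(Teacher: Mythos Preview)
Your proposal is correct and follows essentially the same route as the paper. For $K_n$ you use the decomposition \eqref{eq:kn-bis} exactly as the paper does; for $\fB_n'$ the paper simply says the computation ``now follows as in the proof of Theorem \ref{thm:HilbertSeries}'', and what you have written out is precisely that: pass to the initial module via the Gr\"obner basis of Proposition \ref{prop:bprime-gb}, identify for each triple $(i,j,k)$ the resulting monomial ideal $I'_{ijk}$, and observe that $S/I'_{ijk}$ is a polynomial ring in the $k+1$ variables $x_{ij},x_{jk},x_{j,1},\dots,x_{j,k-1}$.

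One small wording issue: your parenthetical justification that the leading terms of $\bg_1,\dots,\bg_4$ lie in the $r_{ijk}$-component is phrased backwards. Under the order \eqref{eq:orderr}, $r_{lst}\succ r_{ijk}$ when $i>l$ means that \emph{smaller} numerical indices give \emph{larger} elements. The companion $r$'s in $\bg_1,\dots,\bg_4$ are $r_{ijl_2}$, $r_{il_3j}$, $r_{l_4ij}$, each of which has some index strictly \emph{larger} than the corresponding index of $r_{ijk}$ (namely $l_2>k$, $l_3>j$, $l_4>i$); this is why they are $\prec r_{ijk}$ and why $r_{ijk}$ dominates. Your conclusion is right, but the phrase ``smaller-indexed companion $r$'s'' should read ``larger-indexed (hence $\prec$-smaller) companion $r$'s''.
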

\begin{proof}
The $S$-module $K_n$ decomposes as the direct 
sum of $\binom{j-1}{2}$ copies of sub-modules 
$K_{ij}=S/I_{ij}$ for $3\leq j<i\leq n$, where $I_{ij}$ is 
the ideal generated by the variables $x_{st}$ with $1\leq t<s\leq n$ and $(s,t)\neq (i,j)$.
Since $\Hilb(K_{ij},t)=1/(1-t)$, the first equality readily follows.

To prove the second equality, recall first that $\fB_n^{\prime}=S^{\binom{n}{3}}/ \fI$, 
with $ \fI$ the ideal with Gr\"{o}bner basis 
$\cB^{\prime}=\bigcup_{1\leq k<j<i\leq n} \cB^{\prime}_{ijk}$, 
where the set $\cB^{\prime}_{ijk}$ is given in \eqref{eq:cbnprime}. 
It is readily seen that 
\begin{equation}
\ini_{\succ}(\cB_{ijk}^{\prime})=
\left\{ 
x_{kl}\cdot r_{ijk}, ~
x_{ik}\cdot r_{ijk},   ~  x_{il}\cdot r_{ijk},  ~
x_{ab}\cdot r_{ijk},   \{a,b\}\not\subset \{i,j,k,l\}, 1\leq l\leq k-1
\right\}. 
\end{equation}
The claimed expression for the Hilbert series of $\fB_n^{\prime}$ 
now follows as in the proof of Theorem \ref{thm:HilbertSeries}.
\end{proof}

\subsection{A short exact sequence of $S$-modules} 
\label{subsec:sssmod}

In order to understand the annihilator ideal of the infinitesimal Alexander 
invariant $\fB_n$, we approximate it by a simpler quotient module, $\fB_n'$, 
and then study the kernel of the projection map, $K_n$.

\begin{theorem}
\label{thm:SES}
For each $n\geq 4$, there is a short exact sequence of graded $S$-modules, 
\[
\xymatrix{0\ar[r] & K_n \ar[r] & \fB_n \ar[r]^{p} & \fB_n^{\prime} \ar[r] & 0},
\]

\end{theorem}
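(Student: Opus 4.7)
By construction, $\fB_n' = \fB_n/N$, where $N$ is the submodule of $\fB_n$ generated by the elements $\ff_{ijkl} := x_{kl}\cdot r_{ijk}$ for $1 \le l < k < j < i \le n$. Hence $p$ is tautologically surjective with $\ker p = N$, and the task reduces to identifying $N$ with (a degree shift of) $K_n$. Note that the $\ff_{ijkl}$ live in internal degree one, while the canonical generators $e_{ijkl}$ of $K_n$ live in degree zero, so the sequence must be read with the left-hand $K_n$ carrying a shift $(-1)$, as forced by the Hilbert series computation below.

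The plan is to define a degree-preserving graded $S$-module homomorphism $\phi\colon K_n(-1)\to \fB_n$ by $e_{ijkl}\mapsto \ff_{ijkl}$ and show that it yields an isomorphism $K_n(-1)\xrightarrow{\sim} N$. Surjectivity onto $N$ is automatic, since the $\ff_{ijkl}$ generate $N$. Well-definedness amounts to checking that $x_{st}\cdot \ff_{ijkl}=0$ in $\fB_n$ for every $(s,t)\neq(i,j)$ with $1\le t<s\le n$. This is the main technical input; it is verified by a case analysis using the Gr\"obner basis $\cG$ of $\im(\Psi)$ from Theorem~\ref{thm:GrobnerBasis} together with the linear and bifacial relations $\bh_p,\bg_p$ of Lemma~\ref{lem:basisIJK}: (i) if $x_{st}\cdot r_{ijk}$ already vanishes in $\fB_n$ (corresponding to $\bh_3$--$\bh_8$), then $x_{st}\cdot\ff_{ijkl}=0$ trivially; (ii) for $(s,t)=(j,q)$ with $q\le k$, or $(k,q)$ with $q<k$, direct application of the quadratic relations $\bh_0,\bh_9\in\cD_{ijk}$ yields the vanishing; (iii) for $(s,t)=(i,k)$ or $(i,q)$ with $q<k$, the linear relations $\bh_2,\bh_1$ rewrite $x_{st}\cdot r_{ijk}$ as a $\C$-linear combination of terms $x_{jq'}\cdot r_{ijk}$ and $x_{kq'}\cdot r_{ijk}$, each of which is then killed by $\bh_0$ or $\bh_9$ upon multiplication by $x_{kl}$; (iv) for $(s,t)=(i,l_2)$, $(i,l_3)$, or $(l_4,i)$ with $k<l_2<j<l_3<i<l_4$, the relations $\bg_2,\bg_3,\bg_4$ rewrite $x_{st}\cdot r_{ijk}$ as $\pm x_{jk}\cdot r_{abc}$ for a new index triple $(a,b,c)$ whose support is disjoint from $\{k,l\}$, whence $x_{kl}\cdot r_{abc}=0$ by $\bh_8$ applied to $r_{abc}$, and the claim follows.

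Injectivity of $\phi$ follows from a Hilbert series count. Combining Theorem~\ref{thm:HilbertSeries} with Proposition~\ref{prop:HilbBnprime},
\[
\Hilb(\fB_n,t)-\Hilb(\fB_n',t)=\binom{n}{4}\frac{t}{1-t}=t\cdot \Hilb(K_n,t)=\Hilb(K_n(-1),t).
\]
Since $\phi$ surjects onto $N=\ker p$, this matching of Hilbert series upgrades to an isomorphism of graded $S$-modules in each degree, producing the desired exact sequence.

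The principal obstacle is the last family in the well-definedness check, case (iv), where one must first apply a \emph{linear} rewrite shifting the module generator from $r_{ijk}$ to a different $r_{abc}$ before the quadratic $\bh_8$-annihilation can be invoked. The combinatorial key making the reduction work is the disjointness $\{k,l\}\cap\{a,b,c\}=\emptyset$, which is automatic from the strict inequalities $l<k<j<i$ together with the explicit form of $(a,b,c)\in\{(i,j,l_2),(i,l_3,j),(l_4,i,j)\}$ that appears in $\bg_2,\bg_3,\bg_4$; once this observation is in hand, all remaining steps are routine bookkeeping.
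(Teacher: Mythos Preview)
Your approach is essentially the paper's own: define $\phi$ on the free module $S^{\binom{n}{4}}$ by $e_{ijkl}\mapsto x_{kl}\cdot r_{ijk}$, verify that $J\subseteq\ker\phi$ via the Gr\"obner basis $\cG$, and then conclude by matching Hilbert series. The paper compresses the well-definedness step into the single phrase ``it is easy to check that $\phi(J)$ is included in the submodule generated by $\cG$''; your case analysis is a welcome expansion of that, and the degree-shift remark is a useful clarification.

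There is, however, a small gap in your enumeration. The pair $(s,t)=(j,l_2)$ with $k<l_2<j$ is not covered by any of your cases (i)--(iv): it is not in $\bh_3$--$\bh_8$, it falls outside the range $q\le k$ in case~(ii), and case~(iv) as written only invokes $\bg_2,\bg_3,\bg_4$. This case is handled by $\bg_1$, which gives $x_{jl_2}\cdot r_{ijk}=(x_{jk}+x_{l_2k})\cdot r_{ijl_2}$ in $\fB_n$; multiplying by $x_{kl}$ and noting that $\{k,l\}\cap\{i,j,l_2\}=\emptyset$ (since $l<k<l_2$), we get $x_{kl}\cdot r_{ijl_2}=0$ by $\bh_8^{(ijl_2)}$, exactly as in your case~(iv). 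So simply enlarge case~(iv) to include $(j,l_2)$ via $\bg_1$, with the target triple $(a,b,c)=(i,j,l_2)$ added to your list, and the argument closes.
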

\begin{proof}
From the definition of the modules $\fB_n^{\prime}$ and $\fB_n$, there 
is a canonical projection $p\colon \fB_n \surj \fB'_n$. 
Let us verify the claim that $\ker (p)=K_n$. Consider the sequence 
\begin{equation}
\xymatrix{
S^{\binom{n}{4}} \ar[r]^{\phi} &\fB_n \ar[r]^{p} & \fB_n^{\prime}\ar[r] & 0
}.
\end{equation}

Choose a basis $\{e_{ijkl} \mid 1\leq l<k<j<i \leq n\}$ for the 
free module $S^{\binom{n}{4}}$, and let the morphism $\phi$ be 
defined by $\phi(e_{ijkl})=x_{kl}r_{ijk}$. We then have 
$\ker(p)=\im(\phi)$, so the above sequence is exact in the middle. 
Hence we have a short exact sequence of $S$-modules, 
\begin{equation}
\label{eq:SESb}
\xymatrix{
0\ar[r] &S^{\binom{n}{4}}/\ker(\phi) \ar[r]^(.63){\bar\phi} 
&\fB_n \ar[r]^{p} & \fB_n^{\prime}\ar[r] & 0
}.
\end{equation}
In view of the Hilbert series computations from Proposition \ref{prop:HilbBnprime}
and Theorem \ref{thm:HilbertSeries}, we infer that $\Hilb(S^{\binom{n}{4}}/\ker(\phi),t)=\Hilb(K_n,t)$.

Using the Gr\"{o}bner basis $\cG$ for $\im(\Psi)$ from 
Theorem \ref{thm:GrobnerBasis}, it is easy to check that $\phi(J)$ is included in the 
submodule of $S^{\binom{n}{3}}$ generated by $\cG$. Hence, 
we have that $J\subseteq \ker(\phi)$ and there is a canonical surjection 
$K_n\surj S^{\binom{n}{4}}/\ker(\phi)$. Since both $S$-modules have the 
same Hilbert series, we conclude that $K_n\cong S^{\binom{n}{4}}/\ker(\phi)$. 
This completes the proof.
\end{proof}
 
The next following proposition details the relationship between 
the supports of the $S$-modules $K_n$, $\fB_n$, and $\fB_n^{\prime}$.

\begin{prop}
\label{prop:Bnprime}
For each $n\ge 4$,  we have that 
$ \V(\Ann(K_n))\subseteq \V(\Ann (\fB_n))=\V(\Ann(\fB_n^{\prime}))$.
\end{prop}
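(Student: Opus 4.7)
The plan is to combine the short exact sequence of Theorem \ref{thm:SES} with the explicit descriptions of $\V(\Ann(\fB_n))$ and $\V(\Ann(K_n))$ already in hand, and to finish the one non-formal direction by an irreducibility argument.

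I will first invoke the standard fact that for any short exact sequence of finitely generated $S$-modules $0 \to M' \to M \to M'' \to 0$, the containments $\Ann(M') \cdot \Ann(M'') \subseteq \Ann(M) \subseteq \Ann(M') \cap \Ann(M'')$ yield the equality of supports
\[
\V(\Ann(M)) = \V(\Ann(M')) \cup \V(\Ann(M'')).
\]
Applied to the sequence of Theorem \ref{thm:SES}, this gives
\[
\V(\Ann(\fB_n)) = \V(\Ann(K_n)) \cup \V(\Ann(\fB_n^{\prime})),
\]
from which the inclusions $\V(\Ann(K_n)) \subseteq \V(\Ann(\fB_n))$ and $\V(\Ann(\fB_n^{\prime})) \subseteq \V(\Ann(\fB_n))$ are both immediate.

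It remains to establish the reverse inclusion $\V(\Ann(\fB_n)) \subseteq \V(\Ann(\fB_n^{\prime}))$. For this I will use the irreducible decomposition $\V(\Ann(\fB_n)) = \bigcup_{2 \le j < i \le n} L_{ij}$ of Theorem \ref{thm:Resonance}, together with \eqref{eq:kn-bis}, which yields $\V(\Ann(K_n)) = \bigcup_{3 \le j < i \le n} L_{ij}^{\prime}$, where each $L_{ij}^{\prime}$ is the coordinate line along the $x_{ij}$-axis. Fixing a component $L_{ij}$, the union displayed above forces $L_{ij} \subseteq \V(\Ann(K_n)) \cup \V(\Ann(\fB_n^{\prime}))$. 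A direct comparison of the defining equations \eqref{eq:L_ij} of $L_{ij}$ against those of $L_{st}^{\prime}$ shows that $L_{ij} \cap L_{st}^{\prime} \subseteq \{0\}$ whenever $(i,j) \neq (s,t)$, so the trace of $\V(\Ann(K_n))$ on $L_{ij}$ is contained in $L_{ij}^{\prime}$ when $j \ge 3$ (and reduces to $\{0\}$ when $j = 2$). In either case this trace is a proper closed subset of $L_{ij}$, since $\dim L_{ij} = j \ge 2 > 1 \ge \dim L_{ij}^{\prime}$. Irreducibility of $L_{ij}$ then forces $L_{ij} \subseteq \V(\Ann(\fB_n^{\prime}))$, and taking the union over all components concludes the argument.

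The main obstacle, mostly a matter of bookkeeping, will be verifying that $L_{ij} \cap L_{st}^{\prime} = \{0\}$ whenever $(i,j) \neq (s,t)$. This splits into the cases $s \in \{i,j\}$ and $s \notin \{i,j\}$, with the allowed ranges of $t$ further restricting each subcase against \eqref{eq:L_ij}. The computation itself is elementary, but it requires attention, since the equations pinning down $L_{ij}$ change character depending on whether the second index falls below $j$, equals $j$, lies strictly between $j$ and $i$, or exceeds $i$.
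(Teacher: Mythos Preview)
Your argument is correct, but it follows a different path from the paper's. Both proofs start by extracting from the short exact sequence the equality $\V(\Ann(\fB_n))=\V(\Ann(K_n))\cup\V(\Ann(\fB_n'))$, and both identify $\V(\Ann(K_n))$ with the union of coordinate lines $L'_{ij}$ for $3\le j<i\le n$. The difference lies in how the remaining inclusion is obtained. The paper shows directly that $\V(\Ann(K_n))\subseteq\V(\Ann(\fB_n'))$: for each triple $k<j<i$ it forms the cyclic quotient $\fB_n'(ijk)$ of $\fB_n'$, computes that $\V(\Ann(\fB_n'(ijk)))$ is a $2$-plane $P_{ijk}$, and then observes that every line $L'_{i'j'}$ sits inside some $P_{i'j'k}$. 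You instead feed in the irreducible decomposition $\V(\Ann(\fB_n))=\bigcup L_{ij}$ from Theorem~\ref{thm:Resonance} and argue by irreducibility: since each $L_{ij}$ is a linear space of dimension $j\ge 2$ while its intersection with $\V(\Ann(K_n))$ is at most a line, the containment $L_{ij}\subseteq\V(\Ann(K_n))\cup\V(\Ann(\fB_n'))$ forces $L_{ij}\subseteq\V(\Ann(\fB_n'))$. Your route avoids introducing the auxiliary quotients $\fB_n'(ijk)$ and the attendant computation of $P_{ijk}$, at the cost of invoking the full strength of Theorem~\ref{thm:Resonance}; the paper's route is more self-contained at this point in the exposition but requires the extra construction. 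Both are valid, and the case analysis you outline for $L_{ij}\cap L'_{st}=\{0\}$ when $(i,j)\neq(s,t)$ goes through exactly as you indicate.
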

\begin{proof}
Let us start by noting that 
\begin{equation}
\label{eq:AnnKn}
\Ann(K_n)=\Ann\bigg(\bigoplus_{3\leq j<i\leq n}\bigoplus_{1}^{\binom{j-1}{2}} K_{ij}\bigg)=
\bigcap_{3\leq j<i\leq n} \Ann(K_{kl})=\bigcap_{3\leq j<i\leq n} I_{ij}\, .  
\end{equation}
Hence, $\bV(\Ann(K_n))$ is a union of lines $\bV(I_{ij})$ defined by equations
$x_{st}=0$ for $1\leq t<s\leq n$ and $(s,t)\neq (i,j)$.

Now let $\fB_n^{\prime}(ijk)$ be the quotient of $\fB_n^{\prime}$
by the ideal generated by $\{r_{stl}\mid (s,t,l)\neq (i,j,k)\}$. We then have
$\bV(\Ann(\fB_n^{\prime}(ijk)))\subseteq \bV(\Ann(\fB_n^{\prime}))$. 
With the help of Lemma \ref{lem:PhiFormula}, direct computation shows that the variety
$\bV(\Ann(\fB_n^{\prime}(ijk)))$ is the $2$-plane $P_{ijk}$ defined by the equations 
$x_{ik}+x_{jk}=0$ and $x_{st}=0$ for $\{s,t\}\notin \{i,j, k\}$. 

The short exact sequence from Theorem \ref{thm:SES} implies 
that 
\begin{equation}
\label{eq:annshort}
\V(\Ann (\fB_n))=\V(\Ann(\fB_n^{\prime}))\cup \V(\Ann(K_n)).
\end{equation}
Using \eqref{eq:AnnKn}, we see that  $\V(\Ann(K_n))\subseteq \V(\Ann(\fB_n))$ and 
\[
\V(\Ann(K_n))\subseteq \bigcup_{1\leq k<j<i\leq n}\bV(\Ann(\fB_n^{\prime}(ijk)))
\subseteq\V(\Ann(\fB_n^{\prime})).
\] 
Therefore, $\V(\Ann (\fB_n))=\V(\Ann(\fB_n^{\prime}))$, 
thereby completing the proof.
\end{proof}

\subsection{Resonance scheme structure}
\label{subsec:res-scheme}
We now analyze the scheme structure of the annihilator ideals 
of the modules $K_n$ and $\fB_n^{\prime}$ defined above. 

\begin{theorem}
\label{thm:reducedscheme}
The resonance schemes defined by $\Ann(K_n)$ and $\Ann(\fB_n^{\prime})$ are reduced.
\end{theorem}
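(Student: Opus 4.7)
The theorem splits into two independent assertions.

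For $K_n$, the direct sum decomposition $K_n = \bigoplus_{3 \le j < i \le n} K_{ij}^{\oplus \binom{j-1}{2}}$ with $K_{ij} = S/I_{ij}$ already exhibits $\Ann(K_n)$ as the intersection $\bigcap I_{ij}$. Since each $I_{ij}$ is generated by a subset of the variables of $S$, it is a prime ideal; the intersection of primes is radical with no embedded components, so the scheme $\Spec(S/\Ann(K_n))$ is reduced.

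For $\fB_n'$, the plan is to show $\Ann(\fB_n') = \bigcap_{2 \le j < i \le n} \fp_{ij}$, where $\fp_{ij}$ is the prime ideal of the linear subspace $L_{ij}$ from Theorem \ref{thm:Resonance}. The inclusion $\Ann(\fB_n') \subseteq \bigcap \fp_{ij}$ comes from Proposition \ref{prop:Bnprime}, which yields $\sqrt{\Ann(\fB_n')} = \bigcap \fp_{ij}$. For the reverse inclusion, I would establish a direct sum decomposition
\[
\fB_n' = \bigoplus_{2 \le j < i \le n} M_{ij},
\]
where $M_{ij}$ is the submodule generated by $\{r_{ijk} : 1 \le k \le j-1\}$, i.e., by those generators whose triple has $\{i, j\}$ as its two largest indices, and then prove that each summand $M_{ij}$ has $\Ann_S(M_{ij}) = \fp_{ij}$ with no embedded associated primes.

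The Gröbner basis $\cB'$ of Proposition \ref{prop:bprime-gb} supplies the necessary computations. Combining the pure relations $\bh_1,\dots,\bh_8$ from Lemma \ref{lem:basisIJK} with the relations $\cE_{ijk}$, and making essential use of the internal couplings $\bg_1,\bg_2$ among the generators of $M_{ij}$ to derive the linear forms $x_{il}+x_{jl}$ for $k<l<j$, one obtains $\fp_{ij} \subseteq \Ann(r_{ijk})$ in $\fB_n'$. The external couplings $\bg_3,\bg_4$ link $r_{ijk}\in M_{ij}$ with $r_{il_3 j}\in M_{il_3}$ and $r_{l_4 ij}\in M_{l_4 i}$ through identities of the form $x_{jk}\cdot r_{il_3 j}=x_{il_3}\cdot r_{ijk}$; since $x_{il_3}\in \fp_{ij}$ and $x_{jk}\in \fp_{il_3}$, both sides vanish modulo the respective annihilators, so these couplings produce no nonzero element of $M_{ij}\cap \sum_{(s,t)\ne (i,j)}M_{st}$ and the direct sum decomposition holds. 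To exclude embedded primes within $M_{ij}$, I would verify that $M_{ij}$ is torsion-free over $S/\fp_{ij}$, generalizing the case $n=4$ in which $M_{43}$ is the cokernel of the primitive $1\times 2$ matrix $(x_{31},-x_{32})$ over $S/\fp_{43}$ and is therefore torsion-free by Fitting's Lemma; the additivity of Hilbert series then serves as a cross-check against Proposition \ref{prop:HilbBnprime}.

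The principal obstacle lies in the torsion-freeness claim for $M_{ij}$ with $j\ge 3$: the module has $j-1$ generators subject to a combinatorially structured set of syzygies coming from the $\bg_1,\bg_2$ couplings of the $\cB_{ijk}$ family, and one must show that the cokernel of the resulting relation matrix over $S/\fp_{ij}$ has $(0)$ as its only associated prime. This should be tractable via an explicit Gröbner basis computation in the domain $S/\fp_{ij}$, paralleling the analysis in Section 4 but significantly more intricate.
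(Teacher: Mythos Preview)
Your treatment of $\Ann(K_n)$ matches the paper's. For $\Ann(\fB_n')$ you take a genuinely different route: the paper instead applies Lemma~\ref{lem:bound lemma} to the block-triangular Gr\"obner presentation of $\fB_n'$ (whose diagonal blocks are cyclic) to obtain a product of linear ideals inside $\Ann(\fB_n')$, and then identifies this product with $\bigcap Q_{ij}$. Your approach via the annihilators of the individual generators $r_{ijk}$ is more elementary, but you have overcomplicated the endgame.

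The direct-sum decomposition $\fB_n'=\bigoplus M_{ij}$ and the torsion-freeness of $M_{ij}$ over $S/\fp_{ij}$---which you flag as the principal obstacle---are both unnecessary. The theorem asks only that $\Ann(\fB_n')$ be radical, not that $\fB_n'$ itself have no embedded associated primes. Once you have established $\fp_{ij}\subseteq\Ann_{\fB_n'}(r_{ijk})$ for every admissible $k$, the chain
\[
\bigcap_{2\le j<i\le n}\fp_{ij}\ \subseteq\ \bigcap_{i>j>k}\Ann(r_{ijk})\ =\ \Ann(\fB_n')\ \subseteq\ \sqrt{\Ann(\fB_n')}\ =\ \bigcap_{2\le j<i\le n}\fp_{ij}
\]
(the last equality by Proposition~\ref{prop:Bnprime} and Theorem~\ref{thm:Resonance}) finishes the proof at once. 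One small correction to your derivation of $\fp_{ij}\subseteq\Ann(r_{ijk})$: the generators $x_{i,l_3}$ (for $j<l_3<i$) and $x_{l_4,i}$ (for $l_4>i$) of $\fp_{ij}$ do \emph{not} follow from the pure relations $\bh_\ast$, $\cE_{ijk}$, and the internal couplings $\bg_1,\bg_2$ alone; you must use $\bg_3$ and $\bg_4$, reduced modulo $\cE_{il_3 j}$ and $\cE_{l_4 ij}$ respectively (where the coefficient $x_{jk}$ lands in those $\cE$-sets since $k<j$). With that adjustment the verification of $\fp_{ij}\subseteq\Ann(r_{ijk})$ is complete and your obstacle evaporates.
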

\begin{proof}
From \eqref{eq:AnnKn}, it is clear that $\Ann(K_n)$ is reduced.   
So we are left with proving the second assertion. 
Let $Q_{ij}$ be the ideal generated by the linear forms 
$x_{il}+x_{jl}$, $x_{ir}$, and $x_{st}$, where $1\leq l\leq j-1$, 
$j+1\leq r\leq i-1$, $s\neq i$, and $s\neq j, 1\leq t<s$. Clearly, each 
$Q_{ij}$ is a prime ideal. From \eqref{eq:r1a} and Theorem \ref{thm:Resonance}, 
we infer that the set of minimal primes of $\fB_n$ is $\{Q_{ij} \mid 2\leq j<i\leq n\}$.

By Proposition \ref{prop:Bnprime}, we have that $\V(\Ann (\fB_n))=\V(\Ann(\fB_n^{\prime}))$.
Therefore, the set of minimal primes of $\fB_n^{\prime}$ coincides with the set of minimal 
primes of $\fB_n$, and so 
\begin{equation}
\label{eq:AnnBnsubset}
\Ann(\fB_n^{\prime})\subseteq \bigcap_{2\leq j<i\leq n} Q_{ij}.
\end{equation}
 
Applying Lemma \ref{lem:bound lemma} to the $S$-module $\fB_n^{\prime}$, 
we infer that the product of the ideals $Q_{ij}$ from above is contained in 
$\Ann(\fB_n^{\prime})$.  Since the sum of those ideals is $S$, we conclude that 
\begin{equation}
\label{eq:AnnBnprime}
\bigcap\limits_{2\leq j<i\leq n} Q_{ij}=\prod\limits_{2\leq j<i\leq n} Q_{ij}\subseteq 
\Ann(\fB_n^{\prime}).
\end{equation}
Hence, the annihilator of $\fB_n^{\prime}$ 
has primary decomposition 
\begin{equation}
\label{eq:AnnBn}
\Ann(\fB_n^{\prime})=\bigcap\limits_{2\leq j<i\leq n} Q_{ij},
\end{equation}
with each $Q_{ij}$ a prime ideal.
This completes the claim that $\Ann(\fB_n^{\prime})$ is reduced. 
\end{proof}

We are now ready to describe the scheme structure of the first resonance 
variety $\cR_1(P\Sigma_n^+)$.

\begin{theorem}
\label{thm:resonance scheme}
The resonance scheme of the upper McCool group $P\Sigma_n^+$ 
defined by the ideal $\Ann (\fB_n)$ consists 
of the isolated components $L_{ij}$ with $2\le j<i\le n$ listed in 
Theorem \ref{thm:resonance}, together with $1$-dimensional, 
embedded components $L_{ij}' \subset L_{ij}$ defined by the equations
$x_{st}=0$ for $1\leq t<s\leq n$ and $(s,t)\neq (i,j)$, for all $3\le j<i\le n$.
\end{theorem}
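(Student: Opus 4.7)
The plan is to extract the primary decomposition of $\Ann(\fB_n)$ by a standard associated-primes argument applied to the short exact sequence
$0\to K_n\to \fB_n\to \fB_n^{\prime}\to 0$
of Theorem \ref{thm:SES}, combined with the reduced scheme structures of $K_n$ and $\fB_n^{\prime}$ established in Theorem \ref{thm:reducedscheme}.

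\textbf{Step 1: identifying candidate associated primes.} For any short exact sequence of finitely generated $S$-modules $0\to M'\to M\to M''\to 0$, one has $\Ass(M')\subseteq \Ass(M)\subseteq \Ass(M')\cup \Ass(M'')$. By Theorem \ref{thm:reducedscheme}, both $\Ann(K_n)$ and $\Ann(\fB_n^{\prime})$ are reduced, so
$\Ass(K_n)=\{I_{ij}:3\le j<i\le n\}$ and $\Ass(\fB_n^{\prime})=\{Q_{ij}:2\le j<i\le n\}$, where $Q_{ij}$ is the prime cutting out $L_{ij}$ and $I_{ij}=\langle x_{st}\mid(s,t)\ne(i,j)\rangle$ cuts out the line $L_{ij}^{\prime}$. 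Consequently
\[
\Ass(K_n)\subseteq \Ass(\fB_n)\subseteq\{Q_{ij}\}\cup\{I_{ij}\}.
\]

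\textbf{Step 2: isolating the minimal primes.} By Proposition \ref{prop:Bnprime}, $\V(\Ann(\fB_n))=\V(\Ann(\fB_n^{\prime}))$; combining this with the irreducible decomposition $\cR_1(P\Sigma_n^+)=\bigcup_{2\le j<i\le n}L_{ij}$ from Theorem \ref{thm:Resonance}, the minimal primes of $\Ann(\fB_n)$ are exactly the $Q_{ij}$ with $2\le j<i\le n$. These contribute the isolated components $L_{ij}$ of the resonance scheme, matching Theorem \ref{thm:resonance}.

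\textbf{Step 3: verifying that the $I_{ij}$ are embedded.} For $3\le j<i\le n$, the ideal $I_{ij}$ strictly contains $Q_{ij}$: every generator of $Q_{ij}$ (the linear forms $x_{il}+x_{jl}$, $x_{ir}$, and the $x_{st}$ with $s\notin\{i,j\}$) lies in $I_{ij}$, while $I_{ij}$ additionally contains the variables needed to collapse $L_{ij}$ down to the one-dimensional line $L_{ij}^{\prime}$. Since $\Ass(K_n)\subseteq \Ass(\fB_n)$, each such $I_{ij}$ is an associated prime of $\fB_n$; not being minimal, it is embedded, and the corresponding $I_{ij}$-primary component has support $V(I_{ij})=L_{ij}^{\prime}\subset L_{ij}$. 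For $j=2$ the set $\Ass(K_n)$ contributes nothing (since $\binom{j-1}{2}=0$ in the decomposition \eqref{eq:kn-bis}), so no embedded component occurs inside $L_{i2}$, exactly as the theorem asserts.

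\textbf{Step 4: assembling the decomposition.} Combining Steps 1--3, a minimal primary decomposition of $\Ann(\fB_n)$ has the form
\[
\Ann(\fB_n)=\bigcap_{2\le j<i\le n}\widetilde{Q}_{ij}\;\cap\;\bigcap_{3\le j<i\le n}\widetilde{I}_{ij},
\]
with $\sqrt{\widetilde{Q}_{ij}}=Q_{ij}$ and $\sqrt{\widetilde{I}_{ij}}=I_{ij}$. The first family gives the isolated components $L_{ij}$ and the second gives the embedded components $L_{ij}^{\prime}$, which is precisely the statement of the theorem.

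The main obstacle is Step 3: one must be sure that each $I_{ij}$ persists as an associated prime of $\fB_n$ rather than being absorbed when $K_n$ is glued to $\fB_n^{\prime}$. This is supplied by the general fact $\Ass(K_n)\subseteq \Ass(\fB_n)$, but to be safe I would also point to the explicit basis element of $K_n\subset \fB_n$, namely the class of $x_{kl}\cdot r_{ijk}$ for a chosen $(l,k)$ with $l<k<j$, whose annihilator can be read off directly from the Gröbner basis of Theorem \ref{thm:GrobnerBasis} and is precisely $I_{ij}$.
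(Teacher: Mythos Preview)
Your argument is correct and follows essentially the same approach as the paper: both use the short exact sequence $0\to K_n\to\fB_n\to\fB_n'\to 0$ together with the standard inclusions $\Ass(K_n)\subseteq\Ass(\fB_n)\subseteq\Ass(K_n)\cup\Ass(\fB_n')$, then identify the $Q_{ij}$ as the minimal primes of $\fB_n$ via the equality of support varieties $\V(\Ann(\fB_n))=\V(\Ann(\fB_n'))$, concluding that the $I_{ij}$ supplied by $\Ass(K_n)$ are the embedded primes. Your write-up is in fact somewhat more explicit than the paper's (e.g., the check that $Q_{ij}\subsetneq I_{ij}$ and the remark about $j=2$), but the strategy is the same.
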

\begin{proof}
Recall from Theorem \ref{thm:SES} that we have a short exact sequence 
$0\to K_n \to \fB_n \to \fB_n^{\prime}\to  0$. As a consequence, we have 
inclusions of sets of associated primes,
\begin{equation}
\label{eq:AnnKnbis}
\Ass (K_n)\subseteq \Ass (\fB_n)\subseteq \Ass(\fB_n^{\prime})\cup \Ass(K_n).
\end{equation}

On the other hand, Theorems \ref{thm:Resonance} and \ref{thm:reducedscheme}  
imply that $\Ass (\fB_n^{\prime})\subseteq \Ass (\fB_n)$.  Combining this inclusion with 
\eqref{eq:AnnKnbis}, we find that 
\begin{equation}
\label{eq:Annfbneq}
\Ass (\fB_n)= \Ass(\fB_n^{\prime})\cup \Ass(K_n).
\end{equation}

Hence, the isolated components of the resonance scheme are the 
varieties associated to the associated primes of $\fB'_n$, 
while the embedded components are the varieties associated to
the associated primes of $K_n$, i.e., the set of primes $I_{ij}$ 
(with duplicates removed). This completes the proof.
\end{proof}

As a quick application of this theorem, we obtain the following corollary.

\begin{corollary}
\label{cor:notreduced}
For each $n\geq 4$, 
the first resonance variety $\cR_1(P\Sigma_n^+)$ is not 
weakly reduced as a scheme.
\end{corollary}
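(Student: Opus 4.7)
The plan is to read off the corollary directly from Theorem \ref{thm:resonance scheme}, using the definition of weakly reduced given in Section~\ref{subsec:introScheme}. Recall that $\cR_1(G)$ is weakly reduced as a scheme precisely when the only embedded component of $\Ann(\fB(G))$ is the origin $0\in H^1(G;\C)$.

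First, I would invoke Theorem \ref{thm:resonance scheme}, which asserts that the embedded components of the resonance scheme of $P\Sigma_n^+$ are exactly the $1$-dimensional linear subspaces $L_{ij}'\subset L_{ij}$, indexed by pairs $(i,j)$ with $3\leq j<i\leq n$, and defined by the equations $x_{st}=0$ for $(s,t)\neq(i,j)$. Hence each $L_{ij}'$ is a line through the origin, strictly larger than $\{0\}$.

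Next, I would observe that for $n\geq 4$ the indexing set $\{(i,j):3\leq j<i\leq n\}$ is nonempty: the pair $(i,j)=(4,3)$ always appears. Consequently, the resonance scheme of $P\Sigma_n^+$ has at least one embedded component (namely $L_{43}'$) which is a $1$-dimensional linear subspace, and in particular is not the origin. By the definition of weak reducedness, this shows $\cR_1(P\Sigma_n^+)$ is not weakly reduced as a scheme for $n\geq 4$.

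There is essentially no obstacle here, since all the substantive work is already contained in Theorem \ref{thm:resonance scheme}; the corollary is a one-line existence statement extracted from that classification. The only care needed is to verify that the indexing range $3\leq j<i\leq n$ (rather than $2\leq j<i\leq n$) is nonempty exactly when $n\geq 4$, which explains the sharp threshold in the hypothesis.
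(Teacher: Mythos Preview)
Your argument is correct and matches the paper's approach exactly: the paper presents this corollary as an immediate consequence of Theorem~\ref{thm:resonance scheme} without even writing out a separate proof. Your observation that the index set $\{(i,j):3\le j<i\le n\}$ is nonempty precisely when $n\ge 4$ (via the pair $(4,3)$), together with the fact that each $L_{ij}'$ is a line and hence strictly larger than $\{0\}$, is exactly the content needed.
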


\begin{example}
\label{ex:scheme4}
By Theorem \ref{thm:resonance scheme}, the resonance scheme of $P\Sigma_4^+$
contains the isolated components $L_{32}$, $L_{42}$ and $L_{43}$,
and the embedded component $L_{43}^{\prime}$. 
Moreover, from the presentation of $\fB_4=\fB(P\Sigma_4^+)$ given in \eqref{eq:PresentationPsi}, 
we find that a primary ideal corresponding 
to $L_{43}^{\prime}$ in the primary decomposition of  $\Ann(\fB_4)$ is 
\[
J_{43}=\ideal(x_{41}+x_{31}+x_{21},x_{31}x_{21}, x_{32}x_{21},x_{42}x_{21},
 x_{42}x_{31}+x_{32}x_{31},x_{42}x_{32},x_{21}^2 , x_{31}^2 ,x_{32}^2, x_{42}^2),
 \]
with radical ideal $\sqrt{J_{43}}=\ideal(x_{21},x_{31},x_{32},x_{41},x_{42})$.  
\end{example}

\subsection{Higher depth resonance}
\label{subsec:exdisc}
Recall from Theorem \ref{thm:resonance} that each isolated 
component of the scheme defined by $\Ann(\fB_n)$ is a linear subspace 
$L_{ij}$ spanned by the set $\{u_{jl}-u_{il}, u_{ij} \mid 1\leq l\leq j-1\}$.
By Theorem \ref{thm:resonance scheme}, if $j\ge 3$, this linear space contains an embedded component, 
which is the $1$-dimensional linear subspace $L_{ij}'$ spanned by the vector $u_{ij}$. 
The relationship between the isolated components and the embedded components 
of the resonance scheme $\Ann(\fB_n)$ can then be described as
\begin{equation}
\label{eq:lijprime}
L_{ij}'=\big\{a \in L_{ij} \mid \text{$a\cup b = 0$, for all $b\in L_{ij}$}\big\}.
\end{equation}
In other words, $L_{ij}'$ is the maximal subspace of $L_{ij}$ which is 
perpendicular to $L_{ij}$, with respect to the cup-product map on 
$H^1(P\Sigma_n^+,\C)$. 

As another application, we obtain some partial information on the 
higher-depth resonance varieties of the upper McCool groups 
$P\Sigma_n^+$. 

\begin{prop}
\label{prop:depthresonance}
For all $d\geq 2$, the following inclusion holds: 
\begin{equation}
\label{eq:highdepth}
\cR_d (P\Sigma_n^+)\supseteq\bigcup_{d+1\le  j<i\le n} L_{ij}^{\prime}\, .
\end{equation}
\end{prop}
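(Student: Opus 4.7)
The plan is to apply the characterization of depth-$d$ resonance recorded in \eqref{eq:res1}, which asserts (for any $d\ge 1$) that an element $a\in H^1(P\Sigma_n^+;\C)$ lies in $\cR_d(P\Sigma_n^+)$ precisely when there exists a $d$-dimensional linear subspace $W\subset H^1(P\Sigma_n^+;\C)$ with $a\notin W$ and $a\cdot b=0$ for every $b\in W$. Accordingly, for each pair $(i,j)$ with $d+1\le j<i\le n$ and each non-zero element $a\in L_{ij}^{\prime}$, it suffices to exhibit such a subspace $W$.

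I will look for $W$ inside the irreducible component $L_{ij}$ itself. By Theorem \ref{thm:resonance}\eqref{rc1}, $L_{ij}$ is $j$-dimensional with basis $\{u_{jl}-u_{il},\,u_{ij}\mid 1\le l\le j-1\}$; moreover, the characterization \eqref{eq:lijprime} of the embedded component $L_{ij}^{\prime}$ says precisely that every $a\in L_{ij}^{\prime}$ satisfies $a\cdot b=0$ for all $b\in L_{ij}$ (this can also be seen directly from the relations $u_{ij}(u_{il}-u_{jl})=0$ of Theorem \ref{thm:Cohen-P-V-Wu} together with $u_{ij}^2=0$). Writing the non-zero element as $a=c\cdot u_{ij}$ with $c\ne 0$, and exploiting the hypothesis $d\le j-1$, set
\[
W:=\spn_{\C}\{u_{jl}-u_{il}\mid 1\le l\le d\}.
\]
Then $W$ is a $d$-dimensional subspace of $L_{ij}$, it does not contain $a$ (since $u_{ij}$ is linearly independent of the $u_{jl}-u_{il}$ appearing in $W$), and $a\cdot b=0$ for every $b\in W$ by the observation above. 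Therefore $a\in\cR_d(P\Sigma_n^+)$, as required.

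This settles the containment \eqref{eq:highdepth} at all non-zero points of $\bigcup_{d+1\le j<i\le n}L_{ij}^{\prime}$; the origin lies in $\cR_d(P\Sigma_n^+)$ by the standard convention (or by taking $W$ to be any $d$-dimensional subspace of $H^1(P\Sigma_n^+;\C)$, which exists since $\binom{n}{2}\ge d$ in the range under consideration). Because the argument is purely a matter of combining the dimension count in Theorem \ref{thm:resonance}\eqref{rc1} with the defining property \eqref{eq:lijprime} of $L_{ij}^{\prime}$ furnished by Theorem \ref{thm:resonance scheme}, no genuine obstacle is expected; the only subtlety is confirming that the chosen $W$ avoids $a$, which is immediate from the fact that the vectors $u_{jl}-u_{il}$ are independent of $u_{ij}$ within the standard basis of $H^1(P\Sigma_n^+;\C)$.
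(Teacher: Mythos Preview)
Your proof is correct and follows essentially the same approach as the paper's: both arguments use the characterization \eqref{eq:res1} and exhibit, for $a=c\cdot u_{ij}$, a witness subspace $W$ spanned by the vectors $u_{jl}-u_{il}$, invoking the relations $u_{ij}(u_{il}-u_{jl})=0$ from Theorem \ref{thm:Cohen-P-V-Wu}. The only cosmetic difference is that the paper takes $W$ to be the full $(j-1)$-dimensional span while you pick a $d$-dimensional subspace of it, and you add an explicit (and harmless) remark about the origin.
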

\begin{proof}
Let  $W\subset H^1(P\Sigma_n^+,\C)$ be the $(j-1)$-dimensional 
linear subspace spanned by $\{u_{ik}-u_{jk} \mid 1\leq k<j<i\leq n\}$. 
By Theorem \ref{thm:Cohen-P-V-Wu}, we have that 
$u_{ij}(u_{ik}-u_{jk})=0$. Therefore, by  \eqref{eq:res1}, 
$u_{ij}\in \cR_d (P\Sigma_n^+)$ for $d\leq j-1$.  
Hence, $L_{ij}^{\prime}=\spn\{u_{ij}\}$ is included in $\cR_d (P\Sigma_n^+)$ 
for $d+1\le  j<i\le n$. 
\end{proof}

\begin{remark}
\label{rem:high-depth}
It seems reasonable to expect that the depth-$d$ resonance varieties 
of $P\Sigma_n^+$ have a similar decomposition into irreducible components 
as those in depth-$1$.   More precisely, we conjecture that inclusion \eqref{eq:highdepth} 
holds as equality for $d\geq 2$, and gives the decomposition into irreducible components 
of the resonance varieties $\cR_d (P\Sigma_n^+)$.  Furthermore, we expect 
that these varieties are reduced as schemes for all $d\ge 2$.  We have 
verified that this conjecture holds for $n\le 5$, as well as for $n=6$ and $d=2$.
\end{remark}

\section{Appendix: Proof of Theorem \ref{thm:GrobnerBasis}} 
\label{sec:Appendix}

Let $\Psi\colon S^m\to S^{\binom{n}{3}}$ be the $S$-linear map from 
Proposition \ref{prop:reducedPres}. We know from Lemma \ref{lem:basisIJK} 
that the $S$-module $\im(\Psi)$ is generated by the set 
$\cB=\bigcup_{1\leq k<j<i\leq n} \cB_{ijk}$, where $\cB_{ijk}$ consists 
of the elements from \eqref{eq:Rijk}.  Let 
$\cG=\bigcup_{1\leq k<j<i\leq n}  (\cB_{ijk}\cup \mathcal{D}_{ijk})$, where 
$\mathcal{D}_{ijk}$ is given in \eqref{eq:dijk}.
Our task is to show that the set $\cG$ is a Gr\"{o}bner basis for $\im(\Psi)$. 
We do this in two steps.

\subsubsection*{Step 1.} 
We first show that each set $\cD_{ijk}$ is included in $\im(\Psi)$.
Using the description of the sets $\cB_{ijl}$ and $\cB_{ikp}$ from Lemma \ref{lem:basisIJK}, 
we see that for $1\leq p\leq l<  k$ and $1\leq q\leq k$, the elements 
\begin{equation*}
\label{eq:gensGrobf}
\begin{cases}
\ff_1 :=(-{x}_{jl}-{x}_{kl})\cdot{r}_{ijk}+{x}_{jk}\cdot{r}_{ijl}\\
\ff_2 :={x}_{jl}\cdot{r}_{ijk}+{x}_{ik}\cdot{r}_{ijl}
\end{cases} 
\ff_3 :=-{x}_{kp}\cdot{r}_{ijk}+{x}_{ij}\cdot{r}_{ikp}  \quad
\begin{cases}
\ff_4:=x_{kp}\cdot r_{ijl} \\ 
\ff_5:=x_{jq}\cdot r_{ikp} \, .
\end{cases}
\end{equation*} 
are in $\cB\subset \im(\Psi)$.  Direct computation shows that
 \begin{equation}
\label{eq:gensGrob}
\begin{cases}
x_{kl}x_{kp}\cdot r_{ijk}&=  (x_{ik}+x_{jk})\ff_4- x_{kp}(\ff_1+\ff_2)\\
x_{jq}x_{kp}\cdot r_{ijk}&= x_{ij}\ff_5-x_{jp}\ff_3\, ,
\end{cases}
\end{equation} 
from which we conclude that indeed $\cD_{ijk}\subset \im(\Psi)$.

\subsubsection*{Step 2.} 
We now show that all $\fS$-polynomials between pairs of elements of $\cG$ vanish.
Clearly, $\fS$-polynomials of elements whose initial terms contain distinct basis 
elements of $(I^2)^*\otimes S$ vanish; thus, 
we only need to calculate the $\fS$-polynomials of pairs of elements from 
$\cG_{ijk}=\cB_{ijk} \cup \cD_{ijk}$,  for $1\leq k<j<i\leq n$. 
To start with, note that the subset
\begin{equation}
\label{eq:b3}
\cG_{\bh}:=\{ \bh_1, \bh_2, \bh_3, \bh_4, \bh_5, \bh_6, \bh_7, 
\bh_{8}, \bh_9, \bh_0 \}\subset \cG_{ijk}
\end{equation} 
only contains elements of the form $p\cdot r_{ijk}$, where $p\in S$.  
Thus, it is easy to check the vanishing of all $\fS$-polynomials of 
pairs of elements from this subset.  Next, we consider the subset 
\begin{equation}
\label{eq:b2}
\cG_{\bg}:=\left\{\bg_{1}, \bg_2, \bg_3,\bg_4 \right\}\subset \cG_{ijk},
\end{equation} 
and check the vanishing of the polynomials $\fS(g,h)$ for all $g\in \cG_{\bg}$ 
and $h\in \cG_{\bh}$. To make this process easier to follow, we set up some notation. 
In each $\fS$-polynomial $\fS(g,h)$, an item will be underlined if it can be 
written as $p\cdot b$ where $p\in S$, $b\in \cG$, and
$\ini_{\succ}(p\cdot b)\prec \lcm(\ini_{\succ}(g),\ini_{\succ}(h))$. 
We use `$\cdots$' to replace the underlined items in the
previous step.  
\begin{align*}
\fS(\bg_1,\bh_{1})&= -\underline{{x}_{kl_1}(x_{jk}+x_{l_2k})\cdot r_{ijl_2}} -
x_{jl_2}({x}_{il_1}+{x}_{jl_1})\cdot{r}_{ijk}\\
&= \cdots -({x}_{il_1}+{x}_{jl_1})\bg_1-({x}_{il_1}+{x}_{jl_1})(x_{jk}+x_{l_2k})\cdot r_{ijl_2}  \\ 
&= \cdots -\underline{({x}_{il_1}+{x}_{jl_1})\bg_1}- \underline{(x_{jk}+x_{l_2k})\bh_1^{(ijl_2)}}+
 \underline{x_{jk}x_{l_2l_1}\cdot r_{ijl_2}} +\underline{x_{l_2k}x_{l_2l_1}\cdot r_{ijl_2}} \\
\fS(\bg_1,\bh_{2})&= -x_{jk}^2\cdot r_{ijl_2}-\underline{x_{jk}x_{l_2k}\cdot r_{ijl_2}}-
x_{jl_2}{x}_{ik}\cdot{r}_{ijk} \\
&= \cdots -x_{jk}^2\cdot r_{ijl_2} -x_{ik}\bg_1-x_{ik}(x_{jk}+x_{l_2k})\cdot r_{ijl_2}  \\
&= \cdots  -\underline{x_{ik}\bg_1}-\underline{(x_{jk}+x_{l_2k})\bh_1^{(ijl_2)}}+ 
\underline{2x_{jk}x_{l_2k}\cdot r_{ijl_2}}+ \underline{x_{l_2k}^2\cdot r_{ijl_2}}  \\
\fS(\bg_1,\bh_{3})&= -\underline{x_{l_{2}k}x_{jk}\cdot r_{ijl_2}}-
 \underline{x_{l_{2}k}x_{l_2k}\cdot r_{ijl_2}}\\
\fS(\bg_1,\bh_{4})&= -\underline{x_{l_3k}(x_{jk}+x_{l_2k})\cdot r_{ijl_2}}\\
\fS(\bg_1,\bh_{5})&= -\underline{x_{l_3j}(x_{jk}+x_{l_2k})\cdot r_{ijl_2}}\\
\fS(\bg_1,\bh_{6})&= -\underline{x_{l_4k}(x_{jk}+x_{l_2k})\cdot r_{ijl_2}}\\
\fS(\bg_1,\bh_{7})&= -\underline{x_{l_4j}(x_{jk}+x_{l_2k})\cdot r_{ijl_2}}\\
\fS(\bg_1,\bh_{8})&= -\underline{x_{st}(x_{jk}+x_{l_2k})\cdot r_{ijl_2}}\\
\fS(\bg_1,\bh_{9})&= -\underline{x_{kl}x_{kp}(x_{jk}+x_{l_2k})\cdot r_{ijl_2}}\\
\fS(\bg_1,\bh_{0})&= -\underline{x_{jq}x_{kp}(x_{jk}+x_{l_2k})\cdot r_{ijl_2}} \\
\fS(\bg_2,\bh_{1})&= \underline{{x}_{kl_1}x_{jk}\cdot r_{ijl_2}}-x_{il_2}({x}_{il_1}+{x}_{jl_1})\cdot{r}_{ijk}
= \cdots - ({x}_{il_1}+{x}_{jl_1})(\bg_2-x_{jk}\cdot r_{ijl_2})\\
&= \cdots - \underline{({x}_{il_1}+{x}_{jl_1})\bg_2}+\underline{x_{jk}\bh_1^{(ijl_2)}}
-\underline{x_{jk} {x}_{l_2l_1}\cdot r_{ijl_2}}\\
\fS(\bg_2,\bh_{2})&= x_{jk}x_{jk}\cdot r_{ijl_2}-x_{il_2}{x}_{ik}\cdot{r}_{ijk} 
= x_{jk}x_{jk}\cdot r_{ijl_2}-(x_{ik}\bg_2-x_{ik}x_{jk}\cdot r_{ijl_2})  \\
&= \underline{x_{jk}\bh_1^{(ijl_2)}}-\underline{x_{jk}x_{l_2k} \cdot r_{ijl_2}} 
-\underline{x_{ik}\bg_2}  \\
\fS(\bg_2,\bh_{3})&= \underline{x_{l_{2}k}x_{jk}\cdot r_{ijl_2}}\\
\fS(\bg_2,\bh_{4})&= \underline{x_{l_3k}x_{jk}\cdot r_{ijl_2}}\\
\fS(\bg_2,\bh_{5})&= \underline{x_{l_3j}x_{jk}\cdot r_{ijl_2}}\\
\fS(\bg_2,\bh_{6})&= \underline{x_{l_4k}x_{jk}\cdot r_{ijl_2}}\\
\fS(\bg_2,\bh_{7})&= \underline{x_{l_4j}x_{jk}\cdot r_{ijl_2}}\\
\fS(\bg_2,\bh_{8})&= \underline{x_{st}x_{jk}\cdot r_{ijl_2}}\\
\fS(\bg_2,\bh_{9})&= \underline{x_{kl}x_{kp}x_{jk}\cdot r_{ijl_2}} \\
\fS(\bg_2,\bh_{0})&= \underline{x_{jq}x_{kp}x_{jk}\cdot r_{ijl_2}}\\
\fS(\bg_3,\bh_{1})&= -\underline{{x}_{kl_1}x_{jk}\cdot r_{il_3j}}-x_{il_3}({x}_{il_1}+{x}_{jl_1})\cdot{r}_{ijk}
= \cdots - ({x}_{il_1}+{x}_{jl_1})(\bg_3+x_{jk}\cdot r_{il_3j})  \\
&= \cdots - \underline{({x}_{il_1}+{x}_{jl_1})\bg_3}-\underline{x_{jk}\bh_1^{(il_3j)}}+ 
\underline{x_{jk}{x}_{l_3l_1}\cdot r_{il_3j}}   \\
\fS(\bg_3,\bh_{2})&= -\underline{x_{jk}^2\cdot r_{il_3j}}-x_{il_3}{x}_{ik}\cdot{r}_{ijk} 
= \cdots - x_{ik}(\bg_3+x_{jk}\cdot r_{il_3j})\\
&= \cdots - \underline{x_{ik}\bg_3}-\underline{x_{jk} \bh_1^{(il_3j)}}+
\underline{x_{l_3k}x_{jk}\cdot r_{il_3j}}+\underline{x_{jk}^2\cdot r_{il_3j}}\\
\fS(\bg_3,\bh_{3})&= -\underline{x_{l_{2}k}x_{jk}\cdot r_{il_3j}}\\
\fS(\bg_3,\bh_{4})&= -\underline{x_{l_3k}x_{jk}\cdot r_{il_3j}} \\
\fS(\bg_3,\bh_{5})&= -\underline{x_{l_3j}x_{jk}\cdot r_{il_3j}} \\
\fS(\bg_3,\bh_{6})&= -\underline{x_{l_4k}x_{jk}\cdot r_{il_3j}} \\
\fS(\bg_3,\bh_{7})&= -\underline{x_{l_4j}x_{jk}\cdot r_{il_3j}} \\
\fS(\bg_3,\bh_{8})&= -\underline{x_{st}x_{jk}\cdot r_{il_3j} }\\
\fS(\bg_3,\bh_{9})&= -\underline{x_{kl}x_{kp}x_{jk}\cdot r_{il_3j}}  \\
\fS(\bg_3,\bh_{0})&= -\underline{x_{jq}x_{kp}x_{jk}\cdot r_{il_3j}} \\
\fS(\bg_4,\bh_{1})&= \underline{{x}_{kl_1}x_{jk}\cdot r_{l_4ij}} 
-x_{l_4i}({x}_{il_1}+{x}_{jl_1})\cdot{r}_{ijk}\\
&= \cdots -  \underline{({x}_{il_1}}+ \underline{{x}_{jl_1})\bg_4}+
 \underline{{x}_{il_1}x_{jk}\cdot r_{l_4ij}} + \underline{{x}_{jl_1}x_{jk}\cdot r_{l_4ij}}  \\
\fS(\bg_4,\bh_{2})&= \underline{x_{jk}^2\cdot r_{l_4ij}} -x_{l_4i}{x}_{ik}\cdot{r}_{ijk} 
= \cdots - \underline{{x}_{ik}\bg_4}+ \underline{ {x}_{ik}x_{jk}\cdot r_{l_4ij}}\\
\fS(\bg_4,\bh_{3})&= \underline{x_{l_{2}k}x_{jk}\cdot r_{l_4ij}}\\
\fS(\bg_4,\bh_{4})&= \underline{x_{l_3k}x_{jk}\cdot r_{l_4ij}}\\
\fS(\bg_4,\bh_{5})&= \underline{x_{l_3j}x_{jk}\cdot r_{l_4ij}}\\
\fS(\bg_4,\bh_{6})&= x_{l_4k}x_{jk}\cdot r_{l_4ij} 
= \underline{ x_{jk}\bh_1^{(l_4ij)}}-  \underline{x_{jk}x_{ik}\cdot r_{l_4ij}}
- \underline{x_{jk}x_{jk}\cdot r_{l_4ij}} \\
\fS(\bg_4,\bh_{7})&= x_{l_4j}x_{jk}\cdot r_{l_4ij}  
= \underline{x_{jk}\bh_2^{(l_4ij)}} - \underline{x_{ij}x_{jk}\cdot r_{l_4ij}} \\
\fS(\bg_4,\bh_{8})&= x_{st}x_{jk}\cdot r_{l_4ij} 
=\left\{\begin{array}{ll}
\underline{ x_{jk}\bh_1^{(l_4ij)}}-  \underline{x_{jk}x_{it}\cdot r_{l_4ij}}
- \underline{x_{jk}x_{jt}\cdot r_{l_4ij}} , & \textrm{ for } s=l_4 \\
\underline{x_{jk}\bg_4^{(l_4ij)}}-\underline{x_{jk}x_{ij}\cdot r_{sl_4i}} & \textrm{ for } t=l_4 \\
\underline{x_{st}x_{jk}\cdot r_{l_4ij}} & \textrm{ otherwise }
\end{array}  \right.\\
\fS(\bg_4,\bh_{9})&= \underline{x_{kl}x_{kp}x_{jk}\cdot r_{l_4ij}}  \\
\fS(\bg_4,\bh_{0})&= \underline{x_{jq}x_{kp}x_{jk}\cdot r_{l_4ij}}
\end{align*}

Next, we check the vanishing of the $\fS$-polynomials of pairs of elements in $\cG_{\bg}$.
\begin{align*}
\fS(\bg_1,\bg_2)&=-x_{il_2}(x_{jk}+x_{l_2k})\cdot r_{ijl_2}-x_{jl_2}x_{jk}\cdot r_{ijl_2}   \\
&= -\underline{(x_{jk}+x_{l_2k})\bh_2^{(ijl_2)}} + \underline{x_{jl_2}x_{l_2k}\cdot r_{ijl_2}}  \\
\fS(\bg_1,\bg_3)&=-x_{il_3}(x_{jk}+x_{l_2k})\cdot r_{ijl_2}+ {x_{jl_2}x_{jk}\cdot r_{il_3j}}   \\
&=   -\underline{(x_{jk}+x_{l_2k})\bg_3^{(ijl_2)}}-\underline{x_{l_2k}x_{jl_2}\cdot r_{il_3j}} \\
\fS(\bg_1,\bg_4)&=-x_{l_4i}(x_{jk}+x_{l_2k})\cdot r_{ijl_2} - {x_{jl_2}x_{jk}\cdot r_{l_4ij}} \\
&=  -\underline{(x_{jk}+x_{l_2k})\bg_4^{(ijl_2)}}+\underline{x_{l_2k}x_{jl_2}\cdot r_{l_4ij}} \\
\fS(\bg_2,\bg_3)&= x_{il_3}x_{jk}\cdot r_{ijl_2}+x_{il_2}x_{jk}\cdot r_{il_3j} \\
&= x_{jk}\bg_3^{(ijl_2)}+x_{jk}(x_{il_2}+x_{jl_2} )\cdot r_{il_3j} \\
&= \underline{x_{jk}\bg_3^{(ijl_2)}}+\underline{x_{jk}\bh_1^{(il_3j)}}-\underline{x_{l_3l_2} x_{jk}\cdot r_{il_3j}} \\
\fS(\bg_2,\bg_4)&= x_{l_4i}x_{jk}\cdot r_{ijl_2}- {x_{il_2}x_{jk}\cdot r_{l_4ij}}  \\
&= \underline{x_{jk}\bg_4^{(ijl_2)}}- \underline{x_{jk}x_{jl_2}\cdot r_{l_4ij}}-\underline{x_{il_2}x_{jk}\cdot r_{l_4ij}}  \\
\fS(\bg_3,\bg_4)&= -x_{l_4i}x_{jk}\cdot r_{il_3j}- {x_{il_3}x_{jk}\cdot r_{l_4ij}} \\
&= - \underline{x_{jk}\bg_4^{(il_3j)}}+ \underline{x_{jk}x_{l_3j}\cdot r_{l_4il_3}}-\underline{x_{il_3}x_{jk}\cdot r_{l_4ij}} 
\end{align*}

Finally, suppose that $1\leq v_1 <k<v_2<j<v_3<i<v_4\leq n$ and  $v_{\ast}<l_{\ast}$ for $\ast=1,2,3,4$. 
We check the vanishing of the remaining $\fS$-polynomials between elements in $\cG_{\bg}$.
\begin{align*}
\fS(\bg_1,\tilde{\bg}_1)&= -x_{jv_2}x_{jk}\cdot r_{ijl_2}-\underline{x_{jv_2}x_{l_2k}\cdot r_{ijl_2}}  +
 x_{jl_2}x_{jk}\cdot r_{ijv_2}+ \underline{x_{jl_2}x_{v_2k}\cdot r_{ijv_2}} \\
 &= \cdots -x_{jv_2}x_{jk}\cdot r_{ijl_2} + 
 x_{jk}(\bg_1^{(ijv_2)}+x_{jv_2}\cdot r_{ijl_2}+ x_{l_2v_2}\cdot r_{ijl_2} ) \\
  &= \cdots + \underline{x_{jk}\bg_1^{(ijv_2)}}+ \underline{x_{jk}x_{l_2v_2}\cdot r_{ijl_2}}  \\
\fS(\bg_2,\tilde{\bg}_2)&= x_{iv_2}x_{jk}\cdot r_{ijl_2}  - x_{il_2}x_{jk}\cdot r_{ijv_2} \\
  &= x_{jk}(x_{iv_2}+x_{jv_2})\cdot r_{ijl_2}  - x_{jk} \bg_2^{(ijv_2)} \\
     &=\underline{x_{jk}\bh_1^{(ijl_2)}}-\underline{x_{jk}x_{l_2v_2}\cdot r_{ijl_2}}  
     - \underline{x_{jk} \bg_2^{(ijv_2)}} \\
\fS(\bg_3,\tilde{\bg}_3)&= -x_{iv_3}x_{jk}\cdot r_{il_3j} +x_{il_3}x_{jk}\cdot r_{iv_3j} \\
&=-x_{jk}(\bg_2^{(il_3j)}-x_{l_3j} \cdot r_{il_3v_3}) +x_{jk}(\bg_3^{(iv_3j)}+x_{v_3j}\cdot r_{il_3v_3}) \\
&=-\underline{x_{jk}\bg_2^{(il_3j)}}+\underline{x_{jk}x_{l_3j} \cdot r_{il_3v_3}} +
\underline{x_{jk}\bg_3^{(iv_3j)}}+\underline{x_{jk}x_{v_3j}\cdot r_{il_3v_3}} \\
\fS(\bg_4,\tilde{\bg}_4)&= \underline{x_{v_4i}x_{jk}\cdot r_{l_4ij}} -
\underline{x_{l_4i}x_{jk}\cdot r_{v_4ij}}
\end{align*}

Therefore, all the $\fS$-polynomials from $\cG$ vanish, and so $\cG$ is a Gr\"{o}bner 
basis for $\im(\Psi)$.  This completes the proof of Theorem \ref{thm:GrobnerBasis}.
\hfill \qed

\begin{ack}
Computations using Macaulay~2 \cite{Macaulay2} were essential to gaining 
the intuition that led to the results of this paper.  We thank Paolo Bellingeri 
for useful discussions on the McCool groups. We also thank the referee for a 
careful reading of the manuscript and for pertinent remarks. 
\end{ack}

\bibliographystyle{amsplain}

\newcommand{\doi}[1]
{\texttt{\href{http://dx.doi.org/#1}{doi:#1}}}

\newcommand{\arxiv}[1]
{\texttt{\href{http://arxiv.org/abs/#1}{arXiv:#1}}}
\newcommand{\arxi}[1]
{\texttt{\href{http://arxiv.org/abs/#1}{arxiv:}}
\texttt{\href{http://arxiv.org/abs/#1}{#1}}}
\renewcommand{\MR}[1]
{\href{http://www.ams.org/mathscinet-getitem?mr=#1}{MR#1}}

\end{document}